\begin{document}

\title{V{\~u} Ng{\d{o}}c's Conjecture on focus-focus singular fibers with multiple pinched points}
\author{\'Alvaro Pelayo \quad Xiudi Tang}
\date{}

\begin{abstract}
  We classify, up to fiberwise symplectomorphisms, a saturated neighborhood of a singular fiber of an integrable system (which is proper onto its image and has connected fibers) containing $k > 1$ focus-focus critical points.
  Our result shows that there is a one-to-one correspondence between such neighborhoods and $k$ formal power series, up to a $(\Z_2 \times D_k)$-action, where $D_k$ is the $k$-th dihedral group.
  The $k$ formal power series determine the dynamical behavior of the Hamiltonian vector fields associated to the components of the momentum map on the symplectic manifold $(M,\omega)$ near the singular fiber containing the $k$ focus-focus critical points.
  This proves a conjecture of San V{\~u} Ng{\d{o}}c from 2002.
\end{abstract}

\maketitle

\section{Introduction} \label{sec:introduction}

Our goal in this paper is to provide a classification of saturated neighborhoods of a compact connected fiber $\mathcal{F}$ containing only non-degenerate focus-focus singular points of integrable systems $F = M \to \R^2$ on symplectic $4$-manifolds up to isomorphism. 
Here a point $m \in \mathcal{F}$ being of \emph{focus-focus type} means that there is an $E \in GL(2, \R)$ such that if $(J, H) = E \circ (F - F(m))$ then the Hessians $\mathcal{H}_J(m)$ of $J$ and $\mathcal{H}_H(m)$ of $H$ are simultaneously symplectically conjugate to the following matrices
\begin{align*}
  \mathcal{H}_J(m) &\sim \begin{pmatrix}0&0&0&1\\0&0&-1&0\\0&-1&0&0\\1&0&0&0\end{pmatrix}, &\mathcal{H}_H(m) &\sim \begin{pmatrix}0&1&0&0\\1&0&0&0\\0&0&0&1\\0&0&1&0\end{pmatrix}.
\end{align*}
Eliasson's normal form theorem (see Eliasson~\cite{Eliasson_1984} and V{\~u} Ng{\d{o}}c--Wacheux~\cite{MR3098203}) states that the aforementioned normal form is not only achieved linearly but also nonlinearly, in the sense that any focus-focus singular point of $F$ has a neighborhood such that $(M, \omega, F)$ restricted to it is isomorphic to $(\R^4, \omega_0, q)$ restricted to a neighborhood of the origin, where
\begin{equation*}
  q(x_1, y_1, x_2, y_2) = (x_1 y_2 - x_2 y_1, x_1 y_1 + x_2 y_2).
\end{equation*}
In particular, focus-focus singular points are isolated (and $\mathcal{F}$ is allowed to have any finite number of such singular points).
Throughout this paper, with very few exceptions that we point out explicitly, we assume that $F$ \emph{is proper onto its image with connected fibers}.
If all singular points on a compact connected fiber $\mathcal{F}$ have focus-focus type then there are finitely many such points, say $k \in \N$ is the number which is the only topological invariant of such fibers, and then $\mathcal{F}$ is homeomorphic to a torus pinched $k$ times (Zung~\cite{MR1454718, MR1908416}).

In 2003, V{\~u} Ng{\d{o}}c proved~\cite{MR1941440} that integrable systems at a compact connected fiber with one non-degenerate singular point of focus-focus type is classified (up to a $(\Z_2 \times \Z_2)$-action if one does not specify directions, as clarified in~\cite{MR3765971}) by a formal power series $\mathsf{s} \in \mathsf{R}_{2\pi X}$, where $\mathsf{R}$ is the space of formal power series in two variables $X, Y$, without the constant term, and $\mathsf{R}_{2\pi X} \defeq \mathsf{R} / (2\pi X)\Z$.
In his paper, he also stated a claim for the case $k > 1$, which already appeared in the arXiv version of the paper in 2002, and in 2003~\cite[Section~7]{MR1941440} with a sketch of the argument.

He claimed that a neighborhood of a compact connected fiber with precisely $k \in \N$ non-degenerate critical points of focus-focus type is classified up to isomorphisms by $k$ formal power series in $\mathsf{R}$, with $(k-1)$ of which measure the obstruction to construct a semiglobal momentum map in the Eliasson normal form simultaneously at two different singular points, and the other of which is the Taylor series of the action integral in a neighborhood of the critical fiber, vanishing at the origin, desingularized at each singular point.
Essentially, this turns out to be the case, as we verify in the current paper.
Indeed, let $\mathsf{R}_+$ be the group of formal power series in $\mathsf{R}$ with the coefficients of the $Y$ term positive, and let $\Z_k = \Set{\Zkzero, \Zkone, \dotsc, \overline{k-1}}$ and let $D_k$ stand for the $k$-th Dihedral group of order $2k$.
Our main theorem, which is more technical so we state later once we have introduced the necessary ingredients for its precise formulation, will essentially say that in a small symplectic neighborhood of a focus-focus fiber with $k$ singular points of focus-focus type, an integrable system on a symplectic $4$-manifold is classified up to isomorphisms, by $k$ formal power series 
\begin{equation*}
  \mathsf{s_{\Zkzero}} \in \mathsf{R}_{2\pi X}, \mathsf{g}_{\Zkzero, \Zkone}, \mathsf{g}_{\Zkone, \overline{2}}, \dotsc, \mathsf{g}_{\overline{k-2}, \overline{k-1}} \in \mathsf{R}_+
\end{equation*}
modulo an effective $(\Z_2 \times D_k)$-action. 
Here the $D_k$-action controls the way the focus-focus points are ordered and $\Z_2$-action reflects the direction of the natural $\mathbb{S}^1$-action.

\begin{remark} \label{rem:focus-semitoric-almost-toric}
  Focus-focus singular points appear naturally as singularities of semitoric \cite{MR2534101,MR2784664} and almost-toric systems \cite{MR3843843,MR2024634}.
  In fact, any semitoric or almost-toric integrable system satisfies the assumptions above.
  An explicit example of a semitoric system which includes a twice pinched torus for certain values of the parameters is given in \cite{HohPal}.
\end{remark}

This formulation however hides the fact that we give a step-by-step explicit construction of the invariants $\mathsf{s_{\Zkzero}} \in \mathsf{R}_{2\pi X}$ and $\mathsf{g}_{\Zkone, \overline{2}}, \dotsc, \mathsf{g}_{\overline{k-2}, \overline{k-1}} \in \mathsf{R}_+$; for the case of $k=1$, the only invariant is $\mathsf{s_{\Zkzero}}$, and this has been computed (at least some of its terms have been computed) by several authors for important cases such as the coupled spin-oscillator system~\cite{MR3924558,MR2864789}, the spherical pendulum~\cite{MR3017036}, and the coupled angular-momenta~\cite{MR4039778,MR3927110}.
Roughly speaking, $\mathsf{s_{\Zkzero}}$ measures the global singular behavior of the Hamiltonian vector fields $\mathcal{X}_{f_1}$ and $\mathcal{X}_{f_2}$, where $F = (f_1, f_2)$, near the fiber containing the focus-focus singular point.
The travel times of the flows of these vector fields exhibit a singular behavior, of logarithmic type, as they approach the singular points.
The remaining $(k-1)$ Taylor series $\mathsf{g}_{j, j+\Zkone}$ account for the difference between the Eliasson normal forms at the singular points $m_j$ and $m_{j+\Zkone}$.

We will attempt to make the proof as self-contained as possible, and to facilitate this we include a section on preliminares with a quick review of the ingredients we need for the proof of the aforementioned classification result.
We would like to point out a related recent work by Bosinov--Izosimov~\cite{MR4057723} where the authors give a \emph{smooth} classification of semiglobal germs at compact focus-focus leaves.
The smooth invariants they define can be computed from the symplectic invariants of the present paper, see \cref{rem:compare-Izosimov}.
We refer to the recent article \cite{MR4200677} for an introduction and review of recent progress on the symplectic geometry of integrable systems.

\section{Preliminaries} \label{sec:preliminary}

The goal of this section is twofold.
First, we briefly review the basic terminology and results which we need to state the main theorem of the paper in the following sections; this is done with the goal of making the paper as self-contained as possible.
Second, once we have set up the basic notions, we derive some consequences and some variations of them which we will need to prove the main theorem in the following section; most of the new content of this section is concentrated on \cref{ssec:normal-form} about automorphisms of local normal forms, where we prove technical results about local symplectomorphisms and their possible extensions.

\subsection{Basic review of integrable systems} \label{ssec:integrable-system}

In this subsection, we recall the definition of integrable systems and semitoric integrable systems and discuss some of the concepts and their properties that are related to finding the semiglobal symplectic invariants.

Let $(M, \omega)$ be a $2n$-dimensional symplectic manifold.
For a smooth map $f \colon M \to \R$ we denote by $\mathcal{X}_f = -\omega^{-1}(\der f) \in \mathfrak{X}(M)$ the \emph{Hamiltonian vector field} of $f$.
For any smooth maps $f, g \colon M \to \R$ we define their \emph{Poisson bracket} $\Set{f, g} = \omega(\mathcal{X}_f, \mathcal{X}_g)$ and say they are \emph{Poisson commutative} if their Poisson bracket vanishes.

\begin{definition} \label{def:integrable-system}
  Let $(M, \omega)$ be a $2n$-dimensional symplectic manifold.
  Let 
  \begin{equation*}
    F = (f_1, \dotsc, f_n) \colon M \to \R^n
  \end{equation*}
  be a smooth map such that $f_1, \dotsc, f_n$ are functionally independent (i.e. $\der f_1, \dotsc, \der f_n$ are linearly independent) almost everywhere and pairwise Poisson commutative.
  In this case we call $F$ a \emph{momentum map} on $M$ and $(M, \omega, F)$ an \emph{integrable system}.
  Let $\IntSys$ be the collection of all integrable systems.
\end{definition}

It is worth noting that the fiber of a momentum map $F$ is Lagrangian near any regular point of $F$ it goes through.
For any regular point $x \in M$ of $F$, let $b = F(x)$.
Since $\mathcal{X}_1(x), \dotsc, \mathcal{X}_n(x)$ are linearly independent and $\dim T_xF^{-1}(b) = n$, they span $T_xF^{-1}(b)$.
But then $\{f_i, f_j\}(x) = \omega(\mathcal{X}_i, \mathcal{X}_j)(x) = 0$ implies that $T_xF^{-1}(b) \subset T_xM$ is Lagrangian.
Therefore, integrable systems are sometimes considered as singular Lagrangian fibrations.

Let $B = F(M)$ and let $U \subset B$ be an open subset.
For each $\beta \in \Omega^1(U)$, its \emph{Hamiltonian vector field} $\mathcal{X}_\beta = -\omega^{-1} (F^* \beta)$ is a vector field on $F^{-1}(U)$.
When exists, let $\Psi_\beta \colon F^{-1}(U) \to F^{-1}(U)$ be the time-$1$ map of the flow of $\mathcal{X}_\beta$.
Throughout this paper we adopt the following definition for smooth functions or forms on subsets $B \subset \R^n$: let $X$ be a smooth manifold.
A map $f \colon B \to X$ is \emph{smooth} if any $b \in B$ has an open neighborhood $U$ in $\R^n$ and a smooth function $\wt{f} \colon U \to X$ which coincides with $f$ in $B \cap U$.

\begin{definition} \label{def:integrable-system-complete}
  An integrable system $(M, \omega, F)$ is called \emph{complete} if $\Psi_\beta$ exists for any open subset $U \subset B$ and each $\beta \in \Omega^1(U)$.
  Equivalently, the flow of $\mathcal{X}_\beta$ exists for all time.
\end{definition}

\begin{remark} \label{rem:complete-compact=fiber}
  The completeness in \cref{def:integrable-system-complete} is automatic when the moment map has compact fibers, or more specifically, is proper onto its image.
\end{remark}

Let $(M, \omega, F)$ be a complete integrable system.
For any open subset $U \subset B = F(M)$ let $\Period^{(M, \omega, F)}(U) = \Set{\beta \in \Omega^1(U) \mmid \Psi_{2\pi \beta} = \identity}$.
We will use $\Period$ omitting the superscripts if there is no ambiguity.

\begin{definition} \label{def:period-sheaf}
  We call $\Period$ the \emph{period sheaf} of $(M, \omega, F)$, which is a sheaf of abelian groups over $B$.
  The local sections of $\Period$ are \emph{period forms}.
\end{definition}

Consider the presheaf $U \mapsto \Omega^1(U) / 2\pi \Period(U)$, denoted by $\Omega^1 / 2\pi \Period$ of abelian groups on $B$.
On one hand, any global section $\tau \in (\Omega^1 / 2\pi \Period)(B)$ yields a symplectomorphism $\Psi_{\tau}$ of $(M, \omega)$ as a fiberwise translation by the representatives of stalks of $\tau$.
On the other hand, let $P, Q, R \colon B \to M$ be smooth sections of $F$.
If every $b \in B$ has an open neighborhood $U$ in $B$ such that
\begin{equation*}
  \tau^{PQ}_U = \Set{\beta \in \Omega^1(U) \mmid \Psi_\beta \circ \Res{P}_U = \Res{Q}_U}
\end{equation*}
is nonempty, then $\tau^{PQ}_U$ is a coset of $2\pi \Period(U)$ in $\Omega^1(U)$, whose germ at $b$ is an element of the stalk of $\Omega^1 / 2\pi \Period$ at $b$.
Then those $\tau^{PQ}_U$, for $b$ ranging in $B$ and $U$ being neighborhoods as above, glue to a global section $\tau^{PQ} \in (\Omega^1 / 2\pi \Period)(B)$.
In this case we call $\tau^{PQ}$ the \emph{translation form} from $P$ to $Q$.
The translation forms satisfy the additivity property $\tau^{PQ} + \tau^{QR} = \tau^{PR}$, whenever the two forms on the left-hand side are defined.

\begin{definition} \label{def:vertically-transitive}
  A complete integrable system $(M, \omega, F)$ is \emph{vertically transitive} if $\tau^{PQ}$ is nonempty for any smooth sections $P, Q \colon B \to M$ where $B = F(M)$.
\end{definition}

Here we specify the morphisms of integrable systems we consider, and then give some properties of such morphisms.

\begin{definition} \label{def:morphism}
  Let $(M, \omega, F)$, $(M', \omega', F')$ be integrable systems and set $B = F(M)$, $B' = F'(M')$.
  If $G \colon B \to B'$ and $\varphi \colon M \to M'$ satisfy that $F' \circ \varphi = G \circ F$, then we say that $\varphi$ \emph{lifts} $G$.
  A \emph{morphism} from $(M, \omega, F)$ to $(M', \omega', F')$ is a pair $(\varphi \colon M \to M', G \colon B \to B')$ of smooth maps with $\varphi$ lifting $G$ such that $\varphi^* \omega' = \omega$.
  A morphism $(\varphi, G)$ is said to be an \emph{isomorphism} if $\varphi$ is a diffeomorphism, and hence so is $G$.
\end{definition}

\begin{remark} \label{rem:morephism-semitoric}
  The concept of an isomorphism of semitoric systems \cite{MR2534101} is more restrictive than \cref{def:morphism}, which is similar to an isomorphism of integrable systems preserving the direction as in \cref{def:direction,def:moduli-space}.
\end{remark}

\begin{lemma} \label{lem:symplecto-auto}
  Let $(M, \omega, F)$ be a complete integrable system, let $U \subset B = F(M)$ be an open subset and let $\tau \in \Omega^1(U)$.
  Then $\Psi_\tau \colon F^{-1}(U) \to F^{-1}(U)$ is a symplectomorphism if and only if $\tau$ is closed.
\end{lemma}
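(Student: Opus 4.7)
The strategy is to express the discrepancy $\Psi_\tau^*\omega - \omega$ as a single pullback and then reduce the lemma to the implication $F^*\der\tau = 0 \iff \der\tau = 0$.

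First, I observe that $X_\tau$ is tangent to the fibers of $F$. Writing $\tau = \sum_{i} \tau_i\,\der x_i$ in local coordinates on $U$ gives $F^*\tau = \sum_i (\tau_i \circ F)\,\der f_i$, so $X_\tau = \sum_i (\tau_i \circ F)\, X_{f_i}$; since $X_{f_i}(f_j) = \{f_i, f_j\} = 0$, the vector field $X_\tau$ annihilates each $f_j$ and hence preserves $F$. By flow-completeness, its flow $\phi_t$ is defined for all $t \in \R$ and satisfies $F \circ \phi_t = F$.

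Second, Cartan's magic formula combined with $\iota_{X_\tau}\omega = -F^*\tau$ and $\der\omega = 0$ yields $\mathcal{L}_{X_\tau}\omega = -F^*\der\tau$. Using $F \circ \phi_t = F$,
\[
  \frac{\der}{\der t}\,\phi_t^*\omega \;=\; \phi_t^*\,\mathcal{L}_{X_\tau}\omega \;=\; -(F\circ\phi_t)^*\der\tau \;=\; -F^*\der\tau,
\]
which is independent of $t$. Integrating from $0$ to $1$ therefore produces the clean identity
\[
  \Psi_\tau^*\omega - \omega \;=\; -F^*\der\tau,
\]
so $\Psi_\tau$ is a symplectomorphism precisely when $F^*\der\tau = 0$. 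To finish, I would deduce $\der\tau = 0$ from $F^*\der\tau = 0$: by Sard's theorem the regular values of $F$ form a dense subset of $B$, and at any regular value $b \in U$ one picks a preimage $x \in F^{-1}(b)$ at which $\der F_x$ is surjective, which forces $(\der\tau)_b = 0$; continuity then propagates this vanishing to all of $U$.

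\textbf{Main obstacle.} The $(\Rightarrow)$ direction is the delicate one: preservation of $\omega$ by the single time-$1$ map $\Psi_\tau$ does not \emph{a priori} force $\mathcal{L}_{X_\tau}\omega$ to vanish. What rescues the argument is that $\phi_t^*\,\mathcal{L}_{X_\tau}\omega$ is \emph{constant in} $t$, because $X_\tau$ is tangent to the fibers of $F$ and $\mathcal{L}_{X_\tau}\omega$ is itself a pullback through $F$; this identifies $\Psi_\tau^*\omega - \omega$ with $-F^*\der\tau$ exactly, rather than up to an uncontrolled time integral, and thereby closes the biconditional.
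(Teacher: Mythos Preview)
Your proof is correct and follows essentially the same route as the paper's: compute $\mathcal{L}_{X_\tau}\omega = -F^*\der\tau$ via Cartan, observe that the integrand $\phi_t^*\mathcal{L}_{X_\tau}\omega$ is constant in $t$ because $F\circ\phi_t = F$, integrate to get $\Psi_\tau^*\omega - \omega = -F^*\der\tau$, and conclude by the injectivity of $F^*$ at generic points. The only cosmetic differences are that you spell out explicitly why $X_\tau$ is tangent to the fibers before using it, and you phrase the final step via Sard's theorem and regular \emph{values} whereas the paper invokes regular \emph{points} (``$F^*$ is injective on almost every cotangent space''); both versions yield the same density-plus-continuity argument.
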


As a result, $2\pi \Period(U) \subset Z^1(U)$, the space of closed $1$-forms on $U$ for any open $U \subset B$, and we can naturally define $\der$ in $(\Omega^1 / 2\pi \Period)(B)$ with kernel $(Z^1 / 2\pi \Period)(B)$.

\begin{corollary} \label{cor:symplecto-auto-singular}
  Let $(M, \omega, F)$ be a complete integrable system and $B = F(M)$.
  Let $\tau \in (\Omega^1 / 2\pi \Period)(B)$.
  Then $\Psi_\tau \colon M \to M$ is a symplectomorphism if and only if $\tau$ is closed.
\end{corollary}

\begin{lemma} \label{lem:symplecto-endo}
  Let $(M, \omega, F)$ and $(M', \omega', F')$ be complete integrable systems and set $B = F(M)$, $B' = F'(M')$.
  Suppose that $(M, \omega, F)$ admits a Lagrangian section $P \colon B \to M$.
  Let $\varphi \colon M \to M'$ and $G \colon B \to B'$ be diffeomorphisms with $\varphi$ lifting $G$.
  Then $\varphi$ is a symplectomorphism if and only if for any $\tau' \in \Omega^1(B)$, we have $\varphi \circ \Psi_{G^* \tau'} = \Psi_{\tau'} \circ \varphi$ and $\varphi \circ P \circ G^{-1}$ is a Lagrangian section of $F'$. 
\end{lemma}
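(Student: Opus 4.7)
The plan is to study the closed two-form $\omega'' \defeq \varphi^*\omega' - \omega$ on $M$ and prove that it vanishes under each set of hypotheses. For the forward direction, assume $\varphi^*\omega' = \omega$. Combined with $F' \circ \varphi = G \circ F$, a short calculation starting from $\omega'(X_{\tau'}, \cdot) = -(F')^*\tau'$ shows $\varphi_* X_{G^*\tau'} = X_{\tau'} \circ \varphi$ for every $\tau' \in \Omega^1(B')$; the flows are then $\varphi$-related for all times, giving the time-$1$ identity in the statement. Moreover, $\varphi^*\omega' = \omega$ sends isotropic submanifolds to isotropic submanifolds and is compatible with the fibration, so Lagrangian sections of $F$ go to Lagrangian sections of $F'$.

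For the reverse direction, first differentiate the hypothesis $\varphi \circ \Psi_{sG^*\tau'} = \Psi_{s\tau'} \circ \varphi$ in $s$ at $s=0$ (applying it to the rescaled form $s\tau'$) to recover the infinitesimal relation $\varphi_* X_{G^*\tau'} = X_{\tau'} \circ \varphi$. Then, without any hypothesis on $\varphi^*\omega'$, compute
\begin{equation*}
  (\varphi^*\omega')(X_{G^*\tau'}, v) = \omega'(X_{\tau'}\circ\varphi,\, \varphi_* v) = -(F' \circ \varphi)^*\tau'(v) = -F^*G^*\tau'(v) = \omega(X_{G^*\tau'}, v)
\end{equation*}
for every tangent vector $v$ and every $\tau' \in \Omega^1(B')$, so $\iota_{X_{G^*\tau'}}\omega'' = 0$. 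Since $G$ is a diffeomorphism, the vectors $X_{G^*\tau'}(x)$, as $\tau'$ varies, span the tangent space of the fiber of $F$ through any regular point $x$; hence $\omega''$ is semi-basic on the regular locus. Because $\omega''$ is closed, Cartan's magic formula yields $\mathcal{L}_{X_{G^*\tau'}}\omega'' = 0$, so every $\Psi_{G^*\tau'}$ preserves $\omega''$.

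To finish, let $L$ be the Lagrangian section of $F$ whose image $\varphi \circ L$ is also Lagrangian. The identity $L^*\omega'' = (\varphi \circ L)^*\omega' - L^*\omega = 0$, combined with the semi-basic property, forces $\omega''$ to vanish at every point of $L(B)$, and $\Psi$-invariance propagates this vanishing along every $\Psi$-orbit meeting $L(B)$. A density/continuity argument then delivers $\omega'' \equiv 0$ on $M$, i.e.\ $\varphi^*\omega' = \omega$. The main obstacle is precisely this last propagation step: it closes cleanly when the $\Psi$-action is transitive on regular fibers, but in the fully general flow-complete setting one must argue connected-component by connected-component inside each regular fiber to ensure that $L$ meets every $\Psi$-orbit used---an issue that is benign in the paper's standing setting of compact connected fibers of a proper map.
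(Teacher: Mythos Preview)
Your proposal is correct and follows essentially the same route as the paper: both differentiate the flow intertwining to obtain $\varphi_* X_{G^*\tau'} = X_{\tau'}\circ\varphi$, rewrite this as $\omega^{-1}(F^*\tau) = (\varphi^*\omega')^{-1}(F^*\tau)$ (your semi-basic condition $\iota_{X_{G^*\tau'}}\omega''=0$), and then use the Lagrangian-section hypothesis to kill the remaining horizontal--horizontal component.

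The one substantive difference is that your reverse direction is more explicit than the paper's. The paper works pointwise at $x=P(b)$ on the given section $P$, splits $T_xM$ into vertical vectors and vectors tangent to $P(B)$, and concludes $\varphi^*\omega'=\omega$ there; read literally, this only establishes the equality along $P(B)$ and leaves the extension to all of $M$ unstated. You fill this in by observing that $\omega''$ is closed, applying Cartan's formula to get $\mathcal{L}_{X_{G^*\tau'}}\omega''=0$, and propagating the vanishing along $\Psi$-orbits before invoking density of the regular locus. The transitivity caveat you raise at the end is exactly the point needed to make this propagation cover every regular fiber; it is harmless in the paper's standing setting (proper $F$ with connected fibers, so regular fibers are single $\Psi$-orbits by the argument of \cref{lem:period-lattice}), and the paper only ever applies this lemma in such situations.
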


The next lemma follows from Duistermaat~\cite{MR596430}.

\begin{lemma} \label{lem:symplecto-extension}
  Let $(M, \omega, F)$ and $(M', \omega', F')$ be vertically transitive integrable systems as in \cref{def:vertically-transitive}.
  Let $B = F(M)$, $B' = F'(M')$.
  Suppose $\varphi_P$ is a diffeomorphism from a Lagrangian section $P$ of $F$ to a Lagrangian section $P'$ of $F'$ lifting a diffeomorphism $G \colon B \to B'$.
  If $(G^{-1})^* \Period^{(M, \omega, F)} \subset \Period^{(M', \omega', F')}$, then $\varphi_P$ has a unique extension as a surjective local diffeomorphism $\varphi \colon M \to M'$ such that $(\varphi, G)$ is a morphism of integrable systems.
  If $(G^{-1})^* \Period^{(M, \omega, F)} = \Period^{(M', \omega', F')}$, the pair $(\varphi, G)$ is an isomorphism.
\end{lemma}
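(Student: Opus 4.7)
The plan is to promote $\varphi_P$ to a global $\varphi$ by transporting along the fibers via the cotangent action. For $x \in M$ with $b = F(x)$, fiber-transitivity provides some $\beta \in T^*_b B$ with $x = \Psi_\beta(P(b))$, and I would set
\[ \varphi(x) \defeq \Psi_{(G^{-1})^* \beta}\bigl(P'(G(b))\bigr). \]
Well-definedness is the first thing to check: if $\Psi_{\beta_1}(P(b)) = \Psi_{\beta_2}(P(b))$, then $\beta_1 - \beta_2$ lies in the stabilizer of $P(b)$, which is (the stalk at $b$ of) $2\pi \Period$, and the inclusion hypothesis forces $(G^{-1})^*(\beta_1 - \beta_2) \in 2\pi \Period'|_{G(b)}$, so the two candidate values of $\varphi(x)$ agree. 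Uniqueness of the extension is automatic because any morphism satisfying $F' \circ \varphi = G \circ F$ and $\varphi^* \omega' = \omega$ necessarily intertwines the cotangent actions (differentiating yields $d\varphi \cdot X_\tau = X_{(G^{-1})^*\tau} \circ \varphi$), so it is rigid on every fiber once its value on $P$ is fixed.

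For smoothness near a point $x_0 \in M$ with $b_0 = F(x_0)$, I would choose a smooth $1$-form $\tilde\beta \in \Omega^1(U)$ on a neighborhood $U$ of $b_0$ so that $P_1 \defeq \Psi_{\tilde\beta} \circ P$ is a local smooth section through $x_0$. The map $(b, \gamma) \mapsto \Psi_\gamma(P_1(b))$ is a local diffeomorphism from a neighborhood of the zero section of $T^*B|_U$ onto a neighborhood of $x_0$, so inverting it yields a smooth $\gamma \colon V \to T^*B$ with $x = \Psi_{\tilde\beta(F(x)) + \gamma(x)}(P(F(x)))$. Then
\[ \varphi(x) = \Psi_{(G^{-1})^*(\tilde\beta(F(x)) + \gamma(x))}\bigl(P'(G(F(x)))\bigr), \]
which is visibly a composition of smooth pieces. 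Surjectivity is obtained by running the construction backwards from any $y \in M'$.

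Next, I would invoke \cref{lem:symplecto-endo} to conclude $\varphi^* \omega' = \omega$. The construction yields $\varphi \circ \Psi_{G^* \tau'} = \Psi_{\tau'} \circ \varphi$ for every $\tau' \in \Omega^1(B')$, since on each fiber the $T^*_b B$-action is abelian and the parameters add, while $(G^{-1})^*$ conjugates it to the $T^*_{G(b)} B'$-action by construction. Combined with $\varphi \circ P = P' \circ G$ (which maps the Lagrangian $P$ to the Lagrangian $P'$), the proof of \cref{lem:symplecto-endo} applies, provided one uses $d\varphi_x$ (pointwise invertible) in place of a global inverse: on each fiber $\varphi$ descends to a covering $T^*_b B / 2\pi \Period|_b \to T^*_{G(b)} B' / 2\pi \Period'|_{G(b)}$ of abelian Lie groups induced by the linear isomorphism $(G^{-1})^*$, which is surjective under the inclusion hypothesis and bijective under the equality hypothesis. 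This fiber picture shows at once that $\varphi$ is a surjective local diffeomorphism in general, and a global diffeomorphism exactly when $(G^{-1})^* \Period = \Period'$; in the latter case \cref{lem:symplecto-endo} directly promotes $\varphi$ to a symplectomorphism.

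The main obstacle I anticipate is the disciplined bookkeeping between pointwise cotangent vectors $\beta \in T^*_b B$ (natural in defining $\varphi$) and honest local $1$-forms (needed for smoothness and for \cref{lem:symplecto-endo}), and in particular justifying that the stabilizer of $P(b)$ inside $T^*_b B$ coincides with the stalk of $2\pi \Period$ at $b$. Once this correspondence is pinned down, the rest of the argument is a direct cotangent-action transport and a pointwise adaptation of \cref{lem:symplecto-endo}.
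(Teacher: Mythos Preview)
Your proposal is correct and takes essentially the same approach as the paper, which extends $\varphi_P$ by transporting along fibers via the $T^*B$-action and then invokes \cref{lem:symplecto-endo} (applied locally) for the symplectic property. The only cosmetic difference is that the paper packages the transport using smooth sections $Q$ through $x$ and the associated translation forms $\tau^{PQ}\in(\Omega^1/2\pi\Period)(B)$ rather than pointwise covectors, which slightly streamlines the smoothness and well-definedness bookkeeping you flag as the main obstacle.
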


\begin{proof}
  Fix an $x \in M$ and let $b = F(x)$ and $b' = G(b)$.
  Let $Q \colon B \to M$ be a smooth section of $F$ through $x$.
  Since $(M, \omega, F)$ is vertically transitive, we have a translation form $\tau^{PQ} \in (\Omega^1 / 2\pi \Period)(B)$.
  Suppose $(G^{-1})^* \Period^{(M, \omega, F)} \subset \Period^{(M', \omega', F')}$.
  Then $(\tau')^{PQ} = (G^{-1})^* \tau^{PQ}$ belongs to $(\Omega^1 / 2\pi \Period)(B')$ and since $(M', \omega', F')$ is also vertically transitive, $\Psi_{(\tau')^{PQ}} \colon M' \to M'$ is a well-defined diffeomorphism.
  Let $\varphi_Q = \Res{\Psi_{(\tau')^{PQ}} \circ \varphi_P \circ P \circ F}_{Q(B)} \colon Q(B) \to M'$.

  Let $R$ be another smooth section of $F$, let $y = R(b)$, and let $\varphi_R$ be defined analoguously to $\varphi_Q$.
  Then $\varphi_Q(x) = \varphi_R(y)$ is equivalent to $(\tau')^{PQ} = (\tau')^{PR} \in (\Omega^1 / 2\pi \Period)(B')$.
  If $x = y$, then $\tau^{PQ} = \tau^{PR} \in (\Omega^1 / 2\pi \Period)(B)$, so $\varphi_Q(x) = \varphi_R(y)$, which we define to be $\varphi(x)$.
  Then $\varphi \colon M \to M'$ lifts $G$ and $(\varphi, G)$ is a morphism of integrable systems.
  By its defining formula, $\varphi$ is a local diffeomorphism.
  The surjectivity of $\varphi$ is due to the vertical transitivity of $(M', \omega', F')$.
  Applying \cref{lem:symplecto-endo} locally, $\varphi^* \omega' = \omega$ (the completeness condition of \cref{lem:symplecto-endo} is not used here).

  When $(G^{-1})^* \Period^{(M, \omega, F)} = \Period^{(M', \omega', F')}$, $(\tau')^{PQ} = (\tau')^{PR} \in (\Omega^1 / 2\pi \Period)(B')$ would imply $\tau^{PQ} = \tau^{PR} \in (\Omega^1 / 2\pi \Period)(B)$, then $x = y$.
  In this case, $\varphi$ is injective.
\end{proof}

\subsection{Local and semiglobal normal forms} \label{ssec:normal-form}

The goal of this subsection is to recall the existence of local action-angle coordinates \emph{\`a la Duistermaat}~\cite{MR596430}, explicit calculations on the Eliasson's local normal form of focus-focus invariants, and classification of automorphisms of the local normal form from the literature.
They are collected here in a systematic and self-contained way.

Throughout this paper, we use $\mathbb{S}^1 = \R / 2\pi\Z$ and $\mathbb{T}^n = (\mathbb{S}^1)^n$ for $n \in \N$.
Let $(M, \omega, F) \in \IntSys$, let $B = F(M)$, and let $B_{\mathrm{r}}$ be the set of regular values of $F$ in $B$.
The next lemma follows from the ideas in Duistermaat~\cite{MR596430}, of which we include a proof for completeness.

\begin{lemma} \label{lem:period-lattice}
  If $U \subset B_{\mathrm{r}}$ is a simply connected open set over which $F$ has compact and connected fibers and admits a Lagrangian section, then there are $\alpha_1, \dotsc, \alpha_n \in Z^1(U)$ such that $\Period(U) = \oplus_{i=1}^n \alpha_i \Z$.
  Moreover, for any $b \in U$, $2\pi \alpha_1(b), \dotsc, 2\pi \alpha_n(b)$ form a $\Z$-basis of the isotropy subgroup of $T^*_bB$ under the action of $\Psi$.
\end{lemma}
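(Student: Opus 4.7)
My plan is to realize $2\pi \Period(U)$ as the module of smooth sections of an étale bundle of rank-$n$ lattices over $U$, trivialize this bundle using simple connectedness of $U$, and deduce closedness of the resulting generators from \cref{lem:symplecto-auto}. The chief obstacle will be placing a smooth étale structure on the total space of the fiberwise isotropies of $\Psi$; the remaining steps are largely bookkeeping.

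First I would fix $b \in B_{\mathrm{r}}$ and unpack the $T^*_b B$-action on $F^{-1}(b)$. Since $b$ is a regular value, $\der f_1, \dotsc, \der f_n$ are pointwise linearly independent on $F^{-1}(b)$, so the Hamiltonian vector fields $X_{f_1}, \dotsc, X_{f_n}$ are as well; they commute pairwise because $\{f_i, f_j\} = 0$, and they are tangent to $F^{-1}(b)$ because $\der f_j(X_{f_i}) = 0$. The generated $\R^n$-action on $F^{-1}(b)$ is therefore locally free with open orbits, and since $F^{-1}(b)$ is compact (by properness of $F$) and connected, it consists of a single orbit and is diffeomorphic to an $n$-torus. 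Reformulated through $\beta_b \mapsto \Psi_{\beta_b}$, the isotropy $\Lambda_b \subset T^*_b B$ is a rank-$n$ lattice (independent of base point because $T^*_b B$ is abelian), and directly from the definitions, $\beta \in \Period(V)$ if and only if $2\pi \beta(b) \in \Lambda_b$ for every $b \in V$.

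Next I would upgrade the family $\{\Lambda_b\}_{b \in U}$ to an étale cover. Pick a smooth local section $x_0 \colon V \to M$ of $F$ over some open $V \subset U$ and set $E \colon T^*V \to F^{-1}(V)$, $E(b, \beta) = \Psi_\beta(x_0(b))$. Smooth dependence of flows on parameters makes $E$ smooth, and its restriction to each cotangent fiber is a local diffeomorphism by the previous step; since $E$ commutes with the projections to $V$, the full differential of $E$ is then an isomorphism at every point, so $E$ is itself a local diffeomorphism between manifolds of dimension $2n$. The subset $E^{-1}(x_0(V)) = \{(b, \beta) \in T^*V : \beta \in \Lambda_b\}$ is therefore an embedded submanifold étale over $V$. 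Gluing such patches, the total space $\widetilde \Lambda = \{(b, \beta) : b \in U, \beta \in \Lambda_b\}$ becomes an étale cover of $U$ whose fiber over $b$ is the abelian group $\Lambda_b$. Because $U$ is simply connected, any $\Z$-basis $\beta_1^0, \dotsc, \beta_n^0$ of $\Lambda_{b_0}$ at a chosen $b_0 \in U$ lifts uniquely to smooth global sections $\widetilde \alpha_i \colon U \to \widetilde \Lambda$; evaluation at any $b \in U$ is a group isomorphism from sections to $\Lambda_b$, so $\{\widetilde \alpha_i(b)\}_{i=1}^n$ remains a $\Z$-basis of $\Lambda_b$ pointwise.

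Setting $\alpha_i = (2\pi)^{-1} \widetilde \alpha_i \in \Omega^1(U)$ then provides the desired generators: any $\beta \in \Period(U)$ expands as $\beta(b) = \sum_i n_i(b)\, \alpha_i(b)$ with locally constant, hence (by connectedness of $U$) constant, integer coefficients, so $\Period(U) = \bigoplus_{i=1}^n \alpha_i \Z$. Closedness comes for free, as $\Psi_{2\pi \alpha_i} = \identity$ is trivially a symplectomorphism, so \cref{lem:symplecto-auto} forces $\der(2\pi \alpha_i) = 0$ and hence $\alpha_i \in Z^1(U)$. The delicate point is the étale-cover construction above: one must combine transitivity of the fiberwise $T^*_b B$-action, compactness of $F^{-1}(b)$, and constancy of the rank of $\Lambda_b$, all of which ultimately rest on $b$ being a regular value and $F$ being proper.
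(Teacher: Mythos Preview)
Your proposal is correct and follows essentially the same route as the paper: both arguments identify the fiberwise isotropy lattices, realize their union as a submanifold of $T^*U$ via the flow map applied to a (local) section of $F$, observe that this submanifold is a covering of $U$, and then trivialize using simple connectedness. The only cosmetic differences are that you phrase things in \'etale-cover language and glue local sections explicitly, whereas the paper works with a single section $P$ and the map $\lambda(\beta_b) = \Psi_{2\pi\beta_b}P(b)$; and you invoke \cref{lem:symplecto-auto} directly for closedness of the $\alpha_i$, while the paper relies on the remark immediately following that lemma.
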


\begin{proof}
  Let $b \in B_{\mathrm{r}}$.
  Consider the action of $T^*_bB$ on $F^{-1}(b)$ by $\Psi$.
  Since $F^{-1}(b)$ consists of regular points, the action is locally free, and the orbits are open.
  Since $F^{-1}(b)$ is connected, the action is transitive.
  Since $F^{-1}(b)$ is compact, the isotropy subgroup of this action has to be an $n$-lattice, in which case $F^{-1}(b)$ is diffeomorphic to an $n$-torus.

  Choose a Lagrangian section $P \colon U \to F^{-1}(U)$ and the map
  \begin{align*}
    \lambda \colon T^*U &\to F^{-1}(U) \\
    \beta_b \in T^*_bU &\mapsto \Psi_{2\pi \beta_b} P(b)
  \end{align*}
  is smooth by the smooth dependence of ordinary differential equations on parameters, and we have $F \circ \lambda = \pi$, where $\pi \colon T^*U \to U$ is the projection.
  Then $L_U = \lambda^{-1}(P(U))$ is a closed submanifold of $T^*U$.
  Any $\beta_b \in L_U$ has an open neighborhood on which $\lambda$ is diffeomorphic to its image.
  Let $\alpha_{1, b}, \dotsc, \alpha_{n, b}$ be a $\Z$-basis of $L_U \cap T^*_bU$, the isotropy subgroup divided by $2\pi$, we obtain an open neighborhood $U_b$ of $b$ and $\alpha_1, \dotsc, \alpha_n \in Z^1(U_b)$ such that $\alpha_i(b) = \alpha_{i, b}$ and the images of $\alpha_i$, $i = 1, \dotsc, n$, as sections, coincide with $L_U$ near $\alpha_{i, b}$.
  The closedness of $L_U$ ensures that $L_U \cap T^*_bU_b$ is exactly the union of the images of $\alpha_i$, $i = 1, \dotsc, n$.
  Hence $\Res{\lambda}_{L_U} \colon L_U \to P(U)$ is a smooth covering map, which is a trivial covering by the simple connectedness of $U \subset B_{\mathrm{r}}$.
  We arrive at the conclusions in the statement.
\end{proof}

\begin{theorem}[Action-angle coordinates~\cite{MR997295,mineursystemes}] \label{thm:action-angle}
  Let $U \subset B_{\mathrm{r}}$ be a simply connected open subset over which $F$ has compact and connected fibers and admits a Lagrangian section.
  Let $\Pa{\alpha_1, \dotsc, \alpha_n}$ be a $\Z$-basis of $\Period(U)$.
  There are coordinate systems $\Pa{A_1, \dotsc, A_n} \colon U \to \R^n$ and $\Pa{\theta_1, \dotsc, \theta_n, a_1, \dotsc, a_n} \colon F^{-1}(U) \to \mathbb{T}^n \times \R^n$ such that
  \begin{itemize}
    \item $\der A_i = \alpha_i$;
    \item $a_i = F^* A_i$;
    \item $\omega = \sum_{i=1}^n \der \theta_i \wedge \der a_i$.
  \end{itemize}
  We call $A_i$ the \emph{action integrals}, $a_i$ the \emph{action coordinates} and $\theta_i$ the \emph{angle coordinates}.
\end{theorem}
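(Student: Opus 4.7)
The plan is to use the period basis $(\alpha_1,\dotsc,\alpha_n)$ to build action integrals via the Poincar\'e lemma, parametrize $F^{-1}(U)$ as $\mathbb{T}^n \times A(U)$ via a smooth section of $F$ combined with the Hamiltonian $T^*U$-action, and then adjust the section until $\omega$ takes the claimed form.

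First I would apply the Poincar\'e lemma on the simply connected open set $U$ to each closed form $\alpha_i \in Z^1(U)$, producing $A_i \in C^\infty(U)$ with $\der A_i = \alpha_i$, and set $a_i := F^* A_i$. By \cref{lem:period-lattice} the values $\alpha_1(b),\dotsc,\alpha_n(b)$ form a basis of $T^*_bB$ at every $b \in U$, so $A = (A_1,\dotsc,A_n)$ is a local diffeomorphism onto its image; after shrinking $U$ to a contractible subset I may assume it is a global diffeomorphism. Then pick a smooth section $P \colon U \to F^{-1}(U)$ of $F$ (provided by the trivialization of the isotropy bundle in the proof of \cref{lem:period-lattice}) and set
\[
  \Phi \colon \mathbb{T}^n \times U \longrightarrow F^{-1}(U), \qquad (\theta, b) \mapsto \Psi_{\sum_i \theta_i \alpha_i(b)}(P(b)).
\]
This factors through $\mathbb{T}^n$ in the first argument because each $\alpha_i$ lies in $\Period(U)$, and it is a diffeomorphism because the $T^*_bB$-action on the fiber is transitive with isotropy exactly the lattice generated by the $2\pi\alpha_i(b)$. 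I define the angle coordinates $(\theta_1,\dotsc,\theta_n)$ as the $\mathbb{S}^1$-components of $\Phi^{-1}$.

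To identify $\Phi^*\omega$, note that translation of $\theta$ by a constant $\theta_0$ is realized on $F^{-1}(U)$ by $\Psi_{\sum_i \theta_{0,i}\alpha_i}$, which is a symplectomorphism by \cref{lem:symplecto-auto} since $\sum_i \theta_{0,i}\alpha_i$ is closed; hence the coefficients of $\Phi^*\omega$ depend only on $b$. The Lagrangian property of the regular fibers of $F$ forces all $\der\theta_i\wedge\der\theta_j$ coefficients to vanish. Combining $\Phi_*\partial_{\theta_i} = X_{\alpha_i} = X_{a_i}$ with $\omega(X_{a_i},\,\cdot\,) = -\der a_i$ pins the mixed coefficients down to $\pm \delta_{ij}$, the sign being absorbed by an overall orientation choice on the angles. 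Therefore
\[
  \Phi^*\omega = \sum_i \der\theta_i \wedge \der a_i + \pi^*\beta,
\]
where $\pi \colon \mathbb{T}^n \times U \to U$ is the projection and $\beta \in \Omega^2(U)$ is closed because $\omega$ is. On a contractible $U$ write $\beta = \der\gamma$ for some $\gamma \in \Omega^1(U)$ and replace $P$ by $\Psi_\gamma \circ P$. Via the identity $\Psi_\gamma^*\omega = \omega - F^*\der\gamma$ established in the proof of \cref{lem:symplecto-auto}, together with the fact that $F\circ\Phi$ is the projection to $U$, this modification subtracts $\pi^*\beta$ from $\Phi^*\omega$, yielding $\omega = \sum_i \der\theta_i\wedge\der a_i$ in the new coordinates.

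The main obstacle I expect is this last exactness step, which requires $H^2(U) = 0$ rather than only $\pi_1(U) = 0$; for open subsets of $\R^n$ these do not coincide in general. The standard remedy is to pass to a contractible refinement of $U$, which is harmless since the conclusion of the theorem is local in nature.
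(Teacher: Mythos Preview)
Your argument is the standard Duistermaat proof and is correct. The paper takes a slightly shorter path: instead of starting from an arbitrary smooth section and then correcting the residual base $2$-form $\beta$, it simply \emph{chooses a Lagrangian section $P$ from the outset}. With $P$ Lagrangian, the translated sections $\Psi_{\sum_i\theta_i\alpha_i}\circ P$ (the level sets $\{\theta=\mathrm{const}\}$) are Lagrangian as well by \cref{lem:symplecto-auto}, so no $\pi^*\beta$ term ever appears and $\omega=\sum_i\der\theta_i\wedge\der a_i$ drops out immediately from $\partial_{\theta_i}\intprod\omega=\der a_i$. Your route makes the obstruction explicit and removes it by the shift $P\mapsto\Psi_\gamma\circ P$; the paper's route hides the same step inside the phrase ``choose a Lagrangian section''.

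In particular, the obstacle you flag---needing $H^2(U)=0$ rather than only $\pi_1(U)=0$---is not a defect of your argument relative to the paper's. The existence of a global Lagrangian section over $U$ is precisely the vanishing of your class $[\beta]\in H^2(U;\R)$, so the paper is tacitly using the same hypothesis. Both proofs are complete as written only over contractible $U$, which is how the theorem is actually invoked later.
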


\begin{definition} \label{def:local-model-focus}
  Let $(x_1, \xi_1, x_2, \xi_2)$ be the coordinates of $\R^4$.
  Let $\omega_0 = \der x_1 \wedge \der \xi_1 + \der x_2 \wedge \der \xi_2$ be the standard symplectic form on $\R^4$.
  Let $q = (q_1, q_2) \colon \R^4 \to \R^2$ be
  \begin{align*}
    q_1 &= x_1 \xi_2 - x_2 \xi_1, &q_2 &= x_1 \xi_1 + x_2 \xi_2.
  \end{align*}
  We call $(\R^4, \omega_0, q)$ the \emph{local normal form of focus-focus singular points}.
\end{definition}

Now we compute the action $\Psi$ associated with $(\R^4, \omega_0, q)$.
Let $z = x_1 + \imag x_2$, $\zeta = \xi_2 + \imag \xi_1$, then $q_1 + \imag q_2 = z\zeta$, and
\begin{align*}
  \mathcal{X}_{q_1} &= -\omega_0^{-1} \der q_1 = x_2 \partial_{x_1} - x_1 \partial_{x_2} + \xi_2 \partial_{\xi_1} - \xi_1 \partial_{\xi_2}, \\
  \mathcal{X}_{q_2} &= -\omega_0^{-1} \der q_2 = - x_1 \partial_{x_1} - x_2 \partial_{x_2} + \xi_1 \partial_{\xi_1} + \xi_2 \partial_{\xi_2}.
\end{align*}
Let $c = (c_1, c_2)$ be the coordinates of $\R^2$, and let $(t_1, t_2) \in \R^2$.
Then the action of $\Omega^1(\R^2)$ is
\begin{equation} \label{eq:flow-euclidean}
  \Psi_{t_1 \der c_1 + t_2 \der c_2} \Pa{z, \zeta} = \Pa{e^{-t_2 - \imag t_1} z, e^{t_2 + \imag t_1} \zeta}.
\end{equation}
Then $(\R^4, \omega_0, q)$ is a complete integrable system whose period sheaf $\Period^{(\R^4, \omega_0, q)}$ is the constant sheaf with stalks $(\der c_1) \Z$.
We will use the identifications
\begin{align*}
  \R^4 &\to \C^2, & \R^2 &\to \C, \\
  (x_1, \xi_1, x_2, \xi_2) &\mapsto (z, \zeta), &(c_1, c_2) &\mapsto c = c_1 + \imag c_2
\end{align*}
throughout this paper.

Let $\R^2_{\mathrm{r}} \simeq \C_{\mathrm{r}} = \Set{c \in \C \mmid c \neq 0}$.
Let $P, Q \colon \R^2 \to \R^4$ be the two Lagrangian sections of $q$ defined by $P(c) = (1, c)$, $Q(c) = (c, 1)$.
Then let $\kappa \in (Z^1 / 2\pi \Period)(\R^2_{\mathrm{r}})$ denote the translation form
\begin{equation} \label{eq:kappa-definition}
  \kappa = \tau^{PQ} = -\Im \ln c \der c_1 - \Re \ln c \der c_2.
\end{equation}
Define subsets of $\R^4 \simeq \C^2$ as follows:
\begin{align*}
  D^0_{\mathrm{u}} &= \Set{(z, \zeta) \in \C^2 \mmid z = 0}, &D^0_{\mathrm{s}} &= \Set{(z, \zeta) \in \C^2 \mmid \zeta = 0}, \\
  \R^4_{\mathrm{r}} = \C^2_{\mathrm{r}} &= \Set{(z, \zeta) \in \C^2 \mmid q(z, \zeta) \neq 0}, &\mathcal{F}_0 &= \Set{(z, \zeta) \in \C^2 \mmid q(z, \zeta) = 0}.
\end{align*}

Here $D^0_{\mathrm{u}}$ and $D^0_{\mathrm{s}}$ are respectively, the unstable and the stable manifolds of $(0, 0)$ under the flow of $\mathcal{X}_{\der c_2}$.
For any $(t_1, t_2) \in \R^2$ with $t_2 > 0$, the origin is the only $\upalpha$-limit point for the flow lines of $\mathcal{X}_{t_1 \der c_1 + t_2 \der c_2}$ in $D^0_{\mathrm{u}}$, and the $\upomega$-limit point for the flow lines in $D^0_{\mathrm{s}}$.
Let $\proj_1, \proj_2 \colon \R^2 \to \R$ be respectively the projection onto the first and the second component.

Let $M$ and $B$ be smooth manifolds and $b \in B$.
Let $\mathcal{N}(B, b)$ denote the collection of neighborhoods of $b$ in $B$.
If $F \colon M \to B$ is a surjection and $\mathcal{F}$ is a fiber of $F$, then let $\mathcal{N}_F(M, \mathcal{F})$ denote the collection of saturated neighborhoods of $\mathcal{F}$ in $M$ with respect to $F$, where a subset $W \subset M$ is \emph{saturated} with respect to $F$ if $F^{-1}(F(W)) = W$.

\begin{definition} \label{def:focus-focus}
  A singular point $m \in M$ of $F$ is of \emph{focus-focus type} if the Hessians of the components of $F$ under some sympletic coordinates of $M$ near $m$ and some smooth coordinates of $F(M)$ near $F(m)$ equals those of $q$ near $0 \in \R^4$ which is given in \cref{def:local-model-focus}.
\end{definition}

\begin{definition} \label{def:eliasson-focus}
  An \emph{Eliasson local chart} at a singular point $m \in M$ of $F$ of focus-focus type is an isomorphism $(\psi, E)$ from the integrable system $(V, \omega, \Res{F}_V)$ to $(V_0, \omega_0, \Res{q}_{V_0})$ where $V \in \mathcal{N}_F(M, m)$, $V_0 \in \mathcal{N}_q(\R^4, 0)$, $U \in \mathcal{N}(B, 0)$, $U_0 \in \mathcal{N}(\R^2, 0)$, and $q$ is given in \cref{def:local-model-focus}.
  In other words, the following diagram commutes:
  \begin{equation*}
    \xymatrix{
      (V, \omega) \ar[r]^{\psi} \ar[d]^{F} & (V_0, \omega_0) \ar[d]^{q} \\
      U \ar[r]^{E} & U_0
    }.
  \end{equation*}
\end{definition}

\begin{theorem}[Eliasson's theorem~\cite{Eliasson_1984,MR3098197,MR3098203}] \label{thm:eliasson-focus}
  There is an Eliasson local chart at any singular point of focus-focus type.
\end{theorem}

Let $\varphi_X, \varphi_Y \colon (\R^4, \omega_0) \to (\R^4, \omega_0)$ and $G_X, G_Y \colon \R^2 \to \R^2$ be
\begin{equation} \begin{aligned} \label{eq:local-model-diffeomorphism}
   \varphi_X(z, \zeta) &= (\imag \cj{z}, \imag \cj{\zeta}), & G_X(c) &= -\cj{c}, \\
   \varphi_Y(z, \zeta) &= (\imag \cj{\zeta}, -\imag \cj{z}), & G_Y(c) &= \cj{c}.
\end{aligned} \end{equation}
Then $\varphi_X, \varphi_Y$ are symplectomorphisms lifting $G_X$, $G_Y$ respectively (with respect to $q$) and hence they form automorphisms of the standard local model $(\R^4, \omega_0, q)$.

Define a function
\begin{equation} \begin{aligned} \label{eq:def-r}
   r \colon \R^4 \setminus D^0_{\mathrm{u}} \simeq \C^2 \setminus D^0_{\mathrm{u}} &\to \R, \\
   r(z, \zeta) &= \ln \abs{z}
\end{aligned} \end{equation}
as a measurement of the fiberwise translation.
In fact, if $\tau = t_1 \der c_1 + t_2 \der c_2$, we have
\begin{equation} \begin{aligned} \label{eq:r-translation}
  r \circ \Psi_\tau &= r - t_2, \\
  r \circ \Psi_\kappa &= r + \ln \abs{q}.
\end{aligned} \end{equation}

Note that the integrable system $(\R^4, \omega_0, q)$ can be restricted onto $\R^4 \setminus D^0_{\mathrm{u}}$ and $\R^4 \setminus D^0_{\mathrm{s}}$ respectively and remains complete.

\begin{lemma} \label{lem:Psi-kappa}
  The map $\Psi_\kappa \colon (\R^4_{\mathrm{r}}, \omega_0) \to (\R^4_{\mathrm{r}}, \omega_0)$ as in \cref{cor:symplecto-auto-singular}, where $\kappa$ is defined in \cref{eq:kappa-definition}, can be extended to a symplectomorphism $\wt{\Psi}_\kappa \colon (\R^4 \setminus D^0_{\mathrm{u}}, \omega_0) \to (\R^4 \setminus D^0_{\mathrm{s}}, \omega_0)$.
\end{lemma}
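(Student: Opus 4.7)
The plan is to derive $\Psi_\kappa$ in explicit closed form from \cref{eq:flow-euclidean}, observe that the resulting rational expression is smooth on all of $\R^4_{\mathrm{nu}}$, and then verify the symplectic property either directly or by a density argument. Since $\kappa = -\Im \ln c\, \der c_1 - \Re \ln c\, \der c_2$ on $\R^2_\mathrm{r}$, I would apply \cref{eq:flow-euclidean} with $t_1 = -\Im \ln c$ and $t_2 = -\Re \ln c$, so that $t_2 + \imag t_1 = -\ln c$, and substitute $c = z\zeta$ for a point $(z, \zeta) \in \R^4_\mathrm{r}$ to obtain
\begin{equation*}
  \Psi_\kappa(z, \zeta) = \Pa{e^{\ln(z\zeta)} z,\; e^{-\ln(z\zeta)} \zeta} = \Pa{z^2 \zeta,\; z^{-1}}.
\end{equation*}
The branch ambiguity in $\ln c$ is absorbed by the $2\pi$-periodicity of the flow in $t_1$, so the right-hand side is single-valued.

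The expression $(z, \zeta) \mapsto (z^2\zeta, z^{-1})$ is manifestly smooth on all of $\R^4_{\mathrm{nu}} = \Seq{z \neq 0}$, and its image lies in $\R^4_{\mathrm{ns}} = \Seq{\zeta \neq 0}$ because $z^{-1}$ never vanishes on $\R^4_{\mathrm{nu}}$. A direct check shows that $(z', \zeta') \mapsto \Pa{(\zeta')^{-1},\; z'(\zeta')^2}$ is a smooth two-sided inverse on $\R^4_{\mathrm{ns}}$, so the extension $\wt\Psi_\kappa \colon \R^4_{\mathrm{nu}} \to \R^4_{\mathrm{ns}}$ is a diffeomorphism.

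To verify the symplectic property, I would use the identity $\omega_0 = \Im(\der z \wedge \der \zeta)$ in the complex coordinates $(z, \zeta)$. With $z' = z^2\zeta$, $\zeta' = z^{-1}$, one computes
\begin{equation*}
  \der z' \wedge \der \zeta' = \Pa{2 z\zeta\, \der z + z^2\, \der \zeta} \wedge \Pa{-z^{-2}\, \der z} = \der z \wedge \der \zeta,
\end{equation*}
and taking imaginary parts yields $\wt\Psi_\kappa^* \omega_0 = \omega_0$ on $\R^4_{\mathrm{nu}}$. Alternatively, \cref{lem:symplecto-auto-singular} already provides the symplectic identity on the open dense subset $\R^4_\mathrm{r}$ since $\kappa$ is closed, and continuity extends it across the stable manifold $\Seq{z \neq 0,\; \zeta = 0} = \R^4_{\mathrm{nu}} \setminus \R^4_\mathrm{r}$ to all of $\R^4_{\mathrm{nu}}$.

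The only nonroutine step is the first: converting the abstract flow $\Psi_\kappa$ into a rational function of $(z,\zeta)$; once that is done, smoothness across the stable manifold and symplecticity are essentially inspection. The main subtlety is the correct treatment of the multi-valued logarithm, which must be handled by invoking the periodicity of the flow so that the exponential collapses the branch ambiguity into a single-valued rational expression.
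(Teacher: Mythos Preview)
Your proof is correct and follows essentially the same route as the paper. The paper simply writes down the formula $\wt\Psi_\kappa(z,\zeta) = (z^2\zeta, z^{-1})$, observes it agrees with $\Psi_\kappa$ on $\R^4_{\mathrm r}$, and then invokes closedness of $\kappa$ together with \cref{lem:symplecto-auto-singular} plus continuity (mentioning the explicit verification only as an alternative); you supply more detail by actually deriving the formula from \cref{eq:flow-euclidean}, exhibiting the inverse, and carrying out the direct check via $\omega_0 = \Im(\der z \wedge \der\zeta)$, but the underlying argument is the same.
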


\begin{proof}
  Since the map
  \begin{equation} \begin{split} \label{eq:Psi-kappa-ext}
    \wt{\Psi}_\kappa \colon \R^4 \setminus D^0_{\mathrm{u}} &\to \R^4 \setminus D^0_{\mathrm{s}}, \\
    (z, \zeta) &\mapsto (z^2 \zeta, z^{-1}),
  \end{split} \end{equation}
  coincides with $\Psi_\kappa$ on $\R^4_{\mathrm{r}}$, $\wt{\Psi}_\kappa$ is an extension of $\Psi_\kappa$ as a diffeomorphism.
  Since $\der \kappa = 0$ in $\R^2_{\mathrm{r}}$, by \cref{cor:symplecto-auto-singular}, $\Psi_\kappa$ is symplectomorphism of $(\R^4_{\mathrm{r}}, \omega_0)$.
  By continuity, $\wt{\Psi}_\kappa$ is a symplectomorphism.
  Alternatively, one can verify $\wt{\Psi}_\kappa^* \omega_0 = \omega_0$ by explicit computations.
\end{proof}

\begin{lemma} \label{lem:kappa-lemma}
  Let $G \colon U \to U'$ be a diffeomorphism where $U, U' \in \mathcal{N}(\R^2, 0)$ such that $G(c_1, c_2) = (c_1, c_2 + \inftes)$.
  Then $G^*\kappa = \kappa + \inftes \der c_1 + \inftes \der c_2$ as an element of $(\Omega^1 / 2\pi \Period)(U \cap \R^2_\mathrm{r})$.
\end{lemma}

\begin{proof}
  For $c \neq 0$,
  \begin{equation} \begin{split} \label{eq:kappa-lemma}
    G^* \kappa(c) - \kappa(c) &= -\ln \abs{G(c)} \, \frac{\partial G_2}{\partial c_1}(c) \der c_1 - \arg \frac{G(c)}{c} \der c_1 \\
    &\phantom{{}=} - \ln \abs{\frac{G(c)}{c}} \, \frac{\partial G_2}{\partial c_2}(c) \der c_2 - \ln \abs c \Pa{\frac{\partial G_2}{\partial c_2}(c) - 1} \der c_2.
  \end{split} \end{equation}

  By the fact of $x \mapsto \ln(1 + x)$ being analytic and $c \mapsto \frac{G(c)}{c} - 1$ being flat, both components of $\ln \frac{G}{c}$ are flat, which is to say that $-\ln \abs{\frac{G}{c}}, \arg \frac{G}{c} \in \inftes$.
  Since $\frac{\partial G_2}{\partial c_2} - 1, \frac{\partial G_2}{\partial c_1} \in \inftes$, by \cref{lem:logarithm-flat}, we have $\ln \abs \cdot \Pa{\frac{\partial G_2}{\partial c_2} - 1}, \ln \abs{G} \, \frac{\partial G_2}{\partial c_1} \in \inftes$.
  Hence the form in \cref{eq:kappa-lemma} has the shape of $\inftes \der c_1 + \inftes \der c_2$.
\end{proof}

\begin{lemma} \label{lem:Psi-G-pullback-kappa}
  Let $G \colon U \to U'$ be a diffeomorphism where $U, U' \in \mathcal{N}(\R^2, 0)$ such that $G(c_1, c_2) = (c_1, g(c_1, c_2))$ with $\frac{\partial g}{\partial c_2} > 0$.
  Note that $G^* (\Res{\kappa}_{U' \cap B_\mathrm{r}}) \in (\Omega^1 / 2\pi \Period)(U \cap B_\mathrm{r})$.
  Then the symplectomorphism $\Psi_{G^* \kappa} \colon (q^{-1}(U \cap B_\mathrm{r}), \omega_0) \to (q^{-1}(U' \cap B_\mathrm{r}), \omega_0)$ can be extended to a symplectomorphism 
  \begin{equation*}
    \wt{\Psi}_{G^* \kappa} \colon (q^{-1}(U) \setminus D^0_{\mathrm{u}}, \omega_0) \to (q^{-1}(U') \setminus D^0_{\mathrm{s}}, \omega_0)
  \end{equation*} 
  if and only if $G(c_1, c_2) = (c_1, c_2 + \inftes)$.
\end{lemma}

\begin{proof}
  Here we notice that $q^{-1}(U \cap B_\mathrm{r}) = q^{-1}(U) \cap \R^4_{\mathrm{r}}$ is a punctured neighborhood of $\mathcal{F}_0$ in $\R^4$ and
  \begin{align*}
    q^{-1}(U) \setminus D^0_{\mathrm{u}} \in \mathcal{N}_q(\R^4 \setminus D^0_{\mathrm{u}}, D^0_{\mathrm{s}}), \qquad q^{-1}(U') \setminus D^0_{\mathrm{s}} \in \mathcal{N}_q(\R^4 \setminus D^0_{\mathrm{s}}, D^0_{\mathrm{u}}).
  \end{align*}

  Recall that
  \begin{equation*}
    G^* \kappa(c) = -\ln \abs{G(c)} \frac{\partial G_2}{\partial c_1}(c) \der c_1 - \arg G(c) \der c_1 - \ln \abs{G(c)} \frac{\partial G_2}{\partial c_2}(c) \der c_2.
  \end{equation*}
  Let $(z, \zeta) = P(c) = (1, c)$, so $c = q(z, \zeta)$.
  Let $U$ be a neighborhood of $0$ in $\C$.
  Let $h_1 \colon U \cap \C_{\mathrm{r}} \to \C$ and $h_2 \colon U \to \C$ be
  \begin{align*}
    h_1(c) &= \frac{G(c)}{c}, &h_2(c) &= \frac{\partial G_2}{\partial c_2}(c) - 1 + \imag \frac{\partial G_2}{\partial c_1}(c).
  \end{align*}
  Then for $c \in U \cap \C_{\mathrm{r}}$ we have
  \begin{align*}
    \Psi_{G^* \kappa} \circ P(c) = \Pa{\abs{G(c)}^{h_2(c)} G(c), \abs{G(c)}^{-h_2(c)} h_1(c)^{-1}}.
  \end{align*}

  Suppose $\Psi_{G^* \kappa}$ can be extended to $\wt{\Psi}_{G^* \kappa} \colon (\R^4 \setminus D^0_{\mathrm{u}}, \omega_0, D^0_{\mathrm{s}}) \to (\R^4 \setminus D^0_{\mathrm{s}}, \omega_0, D^0_{\mathrm{u}})$.
  By continuity, we conclude that $\lim_{c \to 0} \abs{G(c)}^{-h_2(c)} h_1(c)^{-1}$ as the second complex component of $\wt{\Psi}_{G^* \kappa}(1, 0)$ is nonzero.
  For any fixed $c \in \C_{\mathrm{r}}$, $t \mapsto h_1(tc)$ is smooth at $0$ and $\lim_{t \to 0} h_1(tc) \neq 0$.
  The map $t \mapsto \abs{t}^{-h_2 \circ G^{-1}(tc)}$ is smooth and has nonzero limit at $0$.
  So $t \mapsto h_2 \circ G^{-1}(tc) \ln \abs{tc} = h_2 \circ G^{-1}(tc) \ln \abs{t} + \mathrm{C}^\infty$ is smooth at $0$.
  Hence by an analogous $1$-dimensional version of \cref{lem:logarithm-flat}, $t \mapsto h_2 \circ G^{-1}(tc)$ is flat at $0$.
  By arbitrarity of $c$, we have $h_2 \circ G^{-1} \in \inftes$, so $h_2 \in \inftes$.
  Therefore $G(c_1, c_2) = (c_1, c_2 + \inftes)$.

  On the other hand, if it is known that $G(c_1, c_2) = (c_1, c_2 + \inftes)$, then $h_1$ can be extended to $0$ such that $h_1(0) \neq 0$ and $h_2 \in \inftes$.
  Moreover, $h_2 \ln \abs{G} \in \inftes$ by \cref{lem:logarithm-flat}, so $\abs{G}^{h_2}$ can be extended to a smooth function with value $1$ at $0$.
  Then $\Psi_{G^* \kappa}$ can be extended to a diffeomorphism $\wt{\Psi}_{G^* \kappa} \colon \R^4 \setminus D^0_{\mathrm{u}} \to \R^4 \setminus D^0_{\mathrm{s}}$ at $D^0_{\mathrm{s}}$ sending $D^0_{\mathrm{s}}$ to $D^0_{\mathrm{u}}$.
  The pair $(\wt{\Psi}_{G^* \kappa}, G)$ is an isomorphism since $\wt{\Psi}_{G^* \kappa}$ is a symplectomorphism on the part of its domain inside of $\R^4_{\mathrm{r}}$.
\end{proof}

\begin{lemma} \label{lem:varphi_G}
  Let $G \colon U \to U'$ be a diffeomorphism where $U, U' \in \mathcal{N}(\R^2, 0)$ such that $G(c_1, c_2) = (c_1, g(c_1, c_2))$ with $\frac{\partial g}{\partial c_2} > 0$.
  Then there is a unique symplectomorphism
  \begin{align*}
    \varphi_G \colon (q^{-1}(U) \setminus D^0_{\mathrm{u}}, \omega_0) &\to (q^{-1}(U') \setminus D^0_{\mathrm{u}}, \omega_0)
  \end{align*}
  lifting $G$ characterized by $\varphi_G(1, c) = (1, G(c))$ for $c \in U$.
  Or equivalently, $\varphi_G$ is characterized by 
  \begin{equation} \label{eq:r-varphi_G}
    r = \Pa{\frac{\partial g}{\partial c_2} \circ q} \cdot \Pa{r \circ \varphi_G}
  \end{equation}
  in $q^{-1}(U)$.
  If $G(c_1, c_2) = (c_1, c_2 + \inftes)$, then $\varphi_G$ can be uniquely extended to a symplectomorphism
  \begin{align*}
    \wt{\varphi}_G \colon (q^{-1}(U), \omega_0) &\to (q^{-1}(U'), \omega_0).
  \end{align*}
\end{lemma}

\begin{proof}
  The first part is a result of \cref{lem:symplecto-extension}, as $c \mapsto (1, c)$ and $c \mapsto (1, G(c))$ are Lagrangian sections of $q$, and $G^* \der c_1 = \der c_1$.
  If $\varphi_G \colon (\R^4 \setminus D^0_{\mathrm{u}}, \omega_0, D^0_{\mathrm{s}}) \to (\R^4 \setminus D^0_{\mathrm{u}}, \omega_0, D^0_{\mathrm{s}})$ is a symplectomorphism lifting $G$ and \cref{eq:r-varphi_G}, then $\varphi_G(1, c) = (1, G(c))$ holds automatically.
  We then show that the $\varphi_G$ given by $\varphi_G(1, c) = (1, G(c))$ has the property \cref{eq:r-varphi_G}.
  In fact, if $(z, \zeta) \in \R^4_{\mathrm{r}}$ in a neighborhood of $0$ in $\R^4$, $c = q(z, \zeta) \in \R^2_{\mathrm{r}}$ and let $\tau' = t'_1 \der c_1 + t'_2 \der c_2 \in \Omega^1(\R^2)$ be so that $\Psi_{\tau'}(1, G(c)) = (z', \zeta')$, then since $\varphi_G \circ \Psi_{G^* \tau'} = \Psi_{\tau'} \circ \varphi_G$, we have by \cref{lem:symplecto-auto}
  \begin{align*}
    r(z, \zeta) = r \circ \Psi_{G^* \tau'}(1, c) = -\frac{\partial g}{\partial c_2}(c) t'_2(c) = \frac{\partial g}{\partial c_2}(c) \cdot \Pa{r \circ \Psi_{\tau'}(1, G(c))} = \frac{\partial g}{\partial c_2}(c) \cdot \Pa{r \circ \varphi_G(z, \zeta)}.
  \end{align*}

  For the second part, consider
  \begin{equation} \begin{split} \label{eq:varphi-G-1}
    \wt{\varphi}_G \colon q^{-1}(U) &\to q^{-1}(U'), \\
    (z, \zeta) &\mapsto \Pa{z e^{\Pa{\frac{\partial g}{\partial c_2} \circ q(z, \zeta) - 1 + \imag \frac{\partial g}{\partial c_1} \circ q(z, \zeta)} \ln \abs z}, \textstyle\frac{G \circ q(z, \zeta)}{q(z, \zeta)} \zeta e^{-\Pa{\frac{\partial g}{\partial c_2} \circ q(z, \zeta) - 1 + \imag \frac{\partial g}{\partial c_1} \circ q(z, \zeta)} \ln \abs z}}.
  \end{split} \end{equation}
  Since $\frac{\partial g}{\partial c_2} \circ q - 1 + \imag \frac{\partial g}{\partial c_1} \circ q$ is flat at the origin, the exponents in \cref{eq:varphi-G-1} are flat at the origin by \cref{lem:logarithm-flat} and therefore \cref{eq:varphi-G-1} is smooth.
  By explicit calculations using \cref{lem:symplecto-endo}, $\wt{\varphi}_G$ extends $\varphi_G$.
  By continuity, the extension is a symplectomorphism and such an extension is unique.
\end{proof}

\begin{lemma} [{\cite[Lemma 4.1, Lemma 5.1]{MR1941440}}] \label{lem:local-flat}
  Let $G \colon U \to U'$ be a diffeomorphism where $U, U' \in \mathcal{N}(\R^2, 0)$.
  Then there is a symplectomorphism $\varphi$ in a neighborhood of the origin in $\R^4$ lifting $G$ if and only if $G(c_1, c_2) = (e_1 c_1, e_2 c_2 + \inftes)$, with $e_i = \pm 1$, $i = 1, 2$.
\end{lemma}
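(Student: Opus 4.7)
The plan is to prove each direction separately, using the automorphisms $\varphi_X, \varphi_Y$ of $(\R^4, \omega_0, q)$ (which lift the reflections $G_X(c_1, c_2) = (-c_1, c_2)$ and $G_Y(c_1, c_2) = (c_1, -c_2)$) to reduce to normalized situations where \cref{lem:varphi-G} and \cref{lem:Psi-G-pullback-kappa} apply.

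For the \emph{if} direction, write $G(c_1, c_2) = (e_1 c_1, e_2 c_2 + f(c_1, c_2))$ with $f \in \inftes$, and factor $G = G_{\mathrm{flip}} \circ G_{\mathrm{red}}$ where $G_{\mathrm{flip}}(c_1, c_2) = (e_1 c_1, e_2 c_2)$ and $G_{\mathrm{red}}(c_1, c_2) = (c_1, c_2 + \widetilde f(c_1, c_2))$ for a suitable $\widetilde f \in \inftes$. The factor $G_{\mathrm{flip}}$ is lifted by an appropriate composition of $\varphi_X$ and $\varphi_Y$, while $G_{\mathrm{red}}$ satisfies the hypothesis $\partial_{c_2}(c_2 + \widetilde f)(0) = 1 > 0$ of \cref{lem:varphi-G}, yielding a lift $\widetilde{\varphi}_{G_{\mathrm{red}}}$. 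Composing produces the desired lift of $G$.

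For the \emph{only if} direction, suppose $\varphi$ is a germ of isomorphisms at $0$ with $q \circ \varphi = G \circ q$. Since $\varphi$ is an isomorphism, $(G^{-1})^* \Period^{(\R^4, \omega_0, q)} = \Period^{(\R^4, \omega_0, q)} = (\der c_1) \Z$, forcing $G^* \der c_1 = \pm \der c_1$, so $G_1(c) = e_1 c_1$ with $e_1 \in \{\pm 1\}$. Composing $\varphi$ with $\varphi_X^{-1}$ if needed, I assume $e_1 = 1$, so $G(c_1, c_2) = (c_1, g(c_1, c_2))$. Next, $\varphi$ maps $\mathcal{F}_0$ to itself, and since $\mathcal{F}_0 \setminus \{0\} = (\R^4_{\mathrm{s}} \setminus \{0\}) \sqcup (\R^4_{\mathrm{u}} \setminus \{0\})$ has exactly two connected components, $\varphi$ either preserves or swaps $\R^4_{\mathrm{s}}$ and $\R^4_{\mathrm{u}}$; composing with $\varphi_Y^{-1}$ if needed, I assume both are preserved. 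Commutation with the rotational flow of $X_{q_1}$ then forces $\varphi|_{\R^4_{\mathrm{s}}}(z,0) = (az, 0)$ for some nonzero $a \in \C$, and comparing the restrictions of the identity $\varphi_* X_{g(q_1, q_2)} = X_{q_2}$ to $\R^4_{\mathrm{s}}$ (which reduces to matching the linear fields $\dot z = -(\partial_{c_2} g(0) + \imag \partial_{c_1} g(0)) z$ and $\dot z = -z$ under $z \mapsto az$) yields $\partial_{c_1} g(0) = 0$ and $\partial_{c_2} g(0) = 1 > 0$.

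To upgrade this first-order information to the full statement $g(c_1, c_2) = c_2 + \inftes$, I combine the conjugation identity $\varphi^{-1} \circ \Psi_\kappa \circ \varphi = \Psi_{G^* \kappa}$ provided by \cref{lem:symplecto-endo} with the global extension $\widetilde{\Psi}_\kappa \colon \R^4_{\mathrm{nu}} \to \R^4_{\mathrm{ns}}$ from \cref{lem:Psi-kappa}, which sends $\R^4_{\mathrm{s}}$ to $\R^4_{\mathrm{u}}$. Propagating the germ-at-$0$ formula $\varphi^{-1} \circ \widetilde{\Psi}_\kappa \circ \varphi$ along $\R^4_{\mathrm{s}}$ using the $\Omega^1(\R^2)$-equivariance of $\varphi$ produces the required extension of $\Psi_{G^* \kappa}$ as a germ of isomorphisms $(\R^4_{\mathrm{nu}}, \omega_0, \R^4_{\mathrm{s}}) \to (\R^4_{\mathrm{ns}}, \omega_0, \R^4_{\mathrm{u}})$. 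Invoking \cref{lem:Psi-G-pullback-kappa} then forces $g(c_1, c_2) = c_2 + \inftes$, and undoing the reductions restores the general form $G(c_1, c_2) = (e_1 c_1, e_2 c_2 + \inftes)$. The main obstacle is precisely this propagation step: since $\varphi$ is defined only on a neighborhood of $0 \in \R^4$ whereas $\R^4_{\mathrm{s}}$ extends arbitrarily far, and $\widetilde{\Psi}_\kappa$ sends points near $0$ in $\R^4_{\mathrm{s}}$ to points far from $0$ in $\R^4_{\mathrm{u}}$, one must carefully use the equivariance of $\varphi$ under translations by closed $1$-forms (itself a consequence of \cref{lem:symplecto-endo}) to transport the extendability across all of $\R^4_{\mathrm{s}}$ before \cref{lem:Psi-G-pullback-kappa} can be applied.
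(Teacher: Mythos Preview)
Your proposal follows essentially the same route as the paper: reduce via $\varphi_X, \varphi_Y$ to the case $G(c_1,c_2)=(c_1,g(c_1,c_2))$ with $\varphi$ preserving $\R^4_{\mathrm{s}}$, then use the conjugation identity $\Psi_{G^*\kappa}=\varphi^{-1}\circ\Psi_\kappa\circ\varphi$ together with \cref{lem:Psi-G-pullback-kappa}. The paper handles your ``main obstacle'' in one stroke by invoking \cref{lem:symplecto-extension} at the outset to extend the germ $\varphi$ at $0$ to a germ $\wt\varphi$ at all of $\mathcal{F}_0$; your propagation via equivariance under closed $1$-forms is essentially a rederivation of that lemma in this special case, so you could shorten the argument considerably by citing it directly.

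One minor inaccuracy: $S^1$-equivariance of $\varphi|_{\R^4_{\mathrm{s}}}$ only gives $\varphi(z,0)=(z\,f(|z|^2),0)$ for some smooth $f$ with $f(0)\neq 0$, not $(az,0)$. Your conclusion $\partial_{c_2}g(0)=1$ is unaffected, since conjugacy of vector fields at a common fixed point preserves the eigenvalues of the linearization. This explicit verification of $\partial_{c_2}g(0)>0$ is in fact a point the paper glosses over---it simply asserts that exactly one of the four pairs in \eqref{eq:local-model-automorphism} simultaneously normalizes $e_1$, the action on $\R^4_{\mathrm{s}}$, and the sign of $\partial_{c_2}g$---so your computation usefully fills that gap.
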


\begin{proof}
  If such a $\varphi \colon (V, \omega_0) \to (V', \omega_0)$ exists, by possibly shrinking $U$ and $V$, both $\Period^{(V, \omega_0, \Res{q}_{V})}(U)$ and $\Period^{(V', \omega_0, \Res{q}_{V'})}(U')$ would have rank one, generated by $\der c_1$ on $U$ and $U'$ respectively.
  But since $G$ is a diffeomorphism and $G^* \Period^{(V', \omega_0, \Res{q}_{V'})}(U') = \Period^{(V, \omega_0, \Res{q}_{V})}(U)$, we must have $G(c_1, c_2) = (e_1 c_1, g(c_1, c_2))$, where $e_1 = \pm 1$, for some smooth function $g \colon U \to \R$.
  Let $W = q^{-1}(U)$ and $W' = q^{-1}(U')$.
  Now we \emph{de facto} have $G^* \Period^{(W', \omega_0, \Res{q}_{W'})} = \Period^{(W, \omega_0, \Res{q}_W)}$ and by applying \cref{lem:symplecto-extension} to the vertically transitive integrable systems $(W \setminus D^0_{\mathrm{u}}, \omega_0, \Res{q}_{W \setminus D^0_{\mathrm{u}}})$ and $(W \setminus D^0_{\mathrm{s}}, \omega_0, \Res{q}_{W \setminus D^0_{\mathrm{s}}})$ respectively we can extend $\varphi$ to a symplectomorphism $\wt{\varphi} \colon (W, \omega_0) \to (W', \omega_0)$ lifting $G$.
  Since $\varphi$ preserves the singular fiber $\mathcal{F}_0$, and the punctured fiber $\mathcal{F}_0 \setminus \Set{0}$ has two components $D^0_{\mathrm{u}} \setminus \Set{0}$ and $D^0_{\mathrm{s}} \setminus \Set{0}$, either $\varphi$ preserves the two components or it exchanges them.
  In the first case, $\wt{\varphi}(D^0_{\mathrm{s}}) = D^0_{\mathrm{s}}$; in the latter case, $\wt{\varphi}(D^0_{\mathrm{s}}) = D^0_{\mathrm{u}}$.
  In any of the four cases above ($e_1 = \pm 1$, $\wt{\varphi}(D^0_{\mathrm{s}}) = D^0_{\mathrm{s}}$ or $D^0_{\mathrm{u}})$), there is exactly one choice of $(\varphi_0, G_0)$ from the set (with maps defined in \cref{eq:local-model-diffeomorphism})
  \begin{equation} \label{eq:local-model-automorphism}
    \Set{(\identity, \identity), (\varphi_X, G_X), (\varphi_Y, G_Y), (\varphi_Y \circ \varphi_X, G_Y \circ G_X)}
  \end{equation}
  such that $\varphi_0 \circ \wt{\varphi} \colon (W \setminus D^0_{\mathrm{u}}, \omega_0) \to (W \setminus D^0_{\mathrm{u}}, \omega_0)$, and $\frac{\partial (\proj_2 \circ G_0 \circ G)}{\partial c_2} > 0$.
  Now we have that
  \begin{align*}
    (\varphi_0 \circ \wt{\varphi})^{-1} \circ \Psi_\kappa \circ (\varphi_0 \circ \wt{\varphi}) = \Psi_{(G_0 \circ G)^* \kappa} \colon (W \setminus D^0_{\mathrm{u}}, \omega_0) \to (\R^4 \setminus D^0_{\mathrm{s}}, \omega_0)
  \end{align*}
  is a symplectomorphism and $q \circ (\varphi_0 \circ \wt{\varphi}) = (G_0 \circ G) \circ q$.
  By \cref{lem:Psi-G-pullback-kappa}, we have $G_0 \circ G(c_1, c_2) = (c_1, c_2 + \inftes)$.
  Therefore, $G(c_1, c_2) = (e_1 c_1, e_2 c_2 + \inftes)$, with $e_i = \pm 1$, $i = 1, 2$.

  Conversely, if $G(c_1, c_2) = (e_1 c_1, e_2 c_2 + \inftes)$ we assume, without loss of generality, that $e_1 = e_2 = 1$.
  Otherwise, we can apply a pair of maps in \cref{eq:local-model-automorphism}.
  Let $\varphi = \wt{\varphi}_G \colon q^{-1}(U) \to q^{-1}(U')$ be the symplectomorphism defined as in \cref{lem:varphi_G}.
  Then we have $q \circ \varphi = G \circ q$.
\end{proof}

For the sake of making this paper self-contained, we now recall the known results on the topological structure near the focus-focus fiber up to fiber-preserving diffeomorphisms; see \cite[Chapter 9.8 and Lemma 9.8]{MR2036760} and \cite[Section 3]{MR1454718} for reference.

\begin{definition} \label{def:int-sys-ff}
  Let $\IntSysFF$ be the collection of $4$-dimensional integrable systems $(M, \omega, F) \in \IntSys$ such that $F$ is proper onto its image and has connected fibers\footnotemark{} one of which is a singular fiber $\mathcal{F} = F^{-1}(0)$ that contains focus-focus singular points of $F$.
  We further assume that there are no other singular points in a saturated neighborhood of $\mathcal{F}$.
\end{definition}

\footnotetext{In this case, by the local models, $F$ is an open map and the fibers of $F$ being connected implies that the preimage of any connected set under $F$ is connected.}

The assumptions in \cref{def:int-sys-ff} are not too restrictive since, by the local normal form, focus-focus singular points are isolated.
In fact, by \cite[Proposition 3.21]{MR3843843} an almost-toric integrable system $(M, \omega, F)$ (in the sense that $F$ is proper onto its image and all of its singular orbits are compact and non-degenerate without hyperbolic blocks) satisfies the assumptions of $\IntSysFF$ if the fibers of $F$ are connected.
A semitoric integrable system also belongs to $\IntSysFF$, as defined in \cite[Definition 2.1]{MR2534101} in the sense that the first component of the momentum map is proper and generates a Hamiltonian circle action on $M$, and the momentum map has only non-degenerate singular points without hyperbolic blocks.

Let $(M, \omega, F) \in \IntSysFF$.
Let $\Crit(\mathcal{F})$ denote the finite set of singular points of $F$ in $\mathcal{F} = F^{-1}(0)$ whose cardinality is called the \emph{multiplicity} of $\mathcal{F}$.
For $k \in \N$, let $\IntSysFFk{k}$ be the collection of $(M, \omega, F) \in \IntSysFF$ where $\mathcal{F}$ has multiplicity $k$.
For $(M, \omega, F) \in \IntSysFFk{k}$, as shown in Zung~\cite[Theorem 5.1]{MR1389366}, $\mathcal{F}$ is homeomorphic to a $2$-torus pinched $k$ times along $k$ homologous $1$-cycles with an infinite cyclic group as the isotropy group of the $T^*_0 B$-action on $\mathcal{F} \setminus \Crit(\mathcal{F})$ by $\Psi$, where $B = F(M)$.

Throughout this paper, we denote by $\Z_k$, $k \in \N$, the quotient group $\Z / k\Z$ of residue classes modulo $k$ with the induced operation from the addition on $\Z$.
Implied by \cite[Theorem 5.1]{MR1389366}, there is a line in $T^*_0B$ acting on $\mathcal{F} \setminus \Crit(\mathcal{F})$ as $\mathbb{S}^1$.
We show that such an $\mathbb{S}^1$-action can be extended onto a saturated neighborhood of $\mathcal{F}$, which follows from \cite[Proposition 3 and Corollary 1]{MR1454718}.

\begin{lemma} \label{lem:semiglobal-semitoricity}
  Let $(M, \omega, F) \in \IntSysFFk{k}$ and $B = F(M)$.
  Then the section space $\Period(B)$ of the sheaf $\Period$ is an infinite cyclic group in $Z^1(B)$, so it can be viewed as a constant sheaf associated to $\Z$ over $B$.
  The quotient sheaf restricted to $B_\mathrm{r}$, $\Res{(\Period / \Period(B))}_{B_\mathrm{r}}$, is also a constant sheaf associated to $\Z$ over $B$.
  In fact, there is an assignment to any simply connected open set $U \subset B_\mathrm{r}$ a generator $\alpha_U$ of the infinite cyclic group $\Period(U) / \Res{\Period(B)}_U$, such that, for any such open sets $U_1$ and $U_2$, the restrictions of $\alpha_{U_1}$ and $\alpha_{U_2}$ to $U_1 \cap U_2$ coincide.
\end{lemma}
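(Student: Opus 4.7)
My plan is to prove the two assertions separately: first identifying an explicit generator of $\Period(B)$ via the Eliasson local charts, and then analyzing the monodromy of the quotient sheaf on $B_\mathrm{r}$.

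For the first assertion, \cref{thm:smooth-fiber-focus} supplies a generator $\alpha_0 \in T^*_0 B$ of the $\Z$-kernel of the $T^*_0 B$-action on $\mathcal{F}$. At each focus-focus point $m_j$, the Eliasson chart $(\psi_j, E_j)$ pulls back $\der c_1$ (the generator of $\Period^{(\R^4, \omega_0, q)}$) to a local period form $E_j^* \der c_1 \in \Period(U_j)$ defined in a neighborhood $U_j$ of $0$ in $B$. Applying the local model automorphisms $(\varphi_X, G_X)$ and $(\varphi_Y, G_Y)$ from \cref{sec:isomorphism-local-model}, I arrange that $E_j^* \der c_1(0) = \alpha_0$ for every $j \in \Z_k$. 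To glue the $k$ local forms, I appeal to the quantitative fact that in a punctured neighborhood of $0 \in B_\mathrm{r}$ the lattice $\Period_b$ of \cref{lem:period-lattice} contains a unique primitive generator $\alpha_b$ of bounded length tending to $\alpha_0$, while every other primitive lattice vector has length diverging as $b \to 0$. Since each $E_j^* \der c_1$ is continuous and bounded near $0$ and takes values in $\Period_b$, continuity forces all of them to equal $\alpha_b$ on the common domain. Extending $\alpha_b$ along paths in $B_\mathrm{r}$ then produces a global $\alpha \in \Period(B)$, well-defined because the direction of $\alpha_b$ is intrinsically the $\mathbb{S}^1$-direction and hence monodromy-invariant.

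Injectivity of the evaluation $\Period(B) \to \Z\alpha_0$, $\beta \mapsto \beta(0)$, follows from the same length bound: if $\beta(0) = 0$, then $\beta(b) \in \Z \alpha_b$ on a punctured neighborhood of $0$, and continuity together with $\beta(0) = 0$ force $\beta$ to vanish there; the locally constant structure of $\Period|_{B_\mathrm{r}}$ then propagates $\beta \equiv 0$ to all of $B$. For the quotient sheaf, on each simply connected $U \subset B_\mathrm{r}$ we have $\Period(U) \cong \Z^2$ by \cref{lem:period-lattice}, and $\alpha|_U$ is primitive, so $\Period(U) / \Period(B)|_U$ is infinite cyclic, generated by any $\gamma_U$ completing $\alpha|_U$ to a $\Z$-basis. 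To promote this to a constant sheaf, I verify that the monodromy of $\Period|_{B_\mathrm{r}}$ around a loop in $B_\mathrm{r}$ acts trivially on the quotient: it preserves $\alpha$ (a global section) and preserves the symplectic orientation of the regular torus fibers, hence its matrix in the basis $(\alpha, \gamma)$ lies in $\mathrm{SL}(2,\Z)$ with first column $(1, 0)^\top$, forcing the form $\begin{pmatrix} 1 & m \\ 0 & 1 \end{pmatrix}$ for some $m \in \Z$, which induces the identity on the quotient.

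The main obstacle is justifying the length-divergence claim underpinning both the gluing of the local forms and the injectivity. I plan to extract this from the local normal form analysis already carried out in the paper: the translation form $\kappa = -\Im\ln c\,\der c_1 - \Re\ln c\,\der c_2$ of \cref{eq:kappa-definition} diverges logarithmically as $c \to 0$, and concatenating the corresponding non-$\mathbb{S}^1$ flows in the successive Eliasson charts at $m_0, m_1, \ldots, m_{k-1}$ produces a closed $1$-form whose associated period has length growing like $\abs{\ln \abs{b}}$ as $b \to 0$, so no primitive lattice vector of $\Period_b$ other than $\pm \alpha_b$ can remain bounded.
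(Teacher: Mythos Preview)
Your proposal is correct, and the overall two-step structure matches the paper's. However, the arguments you use within each step differ substantively from those in the paper, so a comparison is worthwhile.

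For the first assertion, the paper shows that all the $E_j^* \der c_1$ coincide by a purely topological observation: for a regular value $c$ near $0$, the orbits $t \mapsto \Psi_{t\beta_j}(x_j)$ in $F^{-1}(c)$ can be arranged to be disjoint embedded circles lying in the pairwise disjoint neighborhoods $V_j$; two disjoint essential simple closed curves on a $2$-torus are homologous up to sign, so the $\beta_j(c)$ agree up to sign, and the normalization $\beta_j(0) = \beta$ fixes the sign. Your analytic route via the length of lattice vectors is also valid --- the logarithmic divergence of the non-$\mathbb{S}^1$ period can indeed be read off from the local normal form and \cref{eq:kappa-definition}, together with \cref{thm:smooth-fiber-focus}, without invoking later material --- but it is heavier machinery than the paper needs here. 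Note also that once $B$ is shrunk so that each $E_j$ is defined on all of $B$ (as the paper does explicitly), your path-extension step becomes unnecessary: $\alpha = E_j^* \der c_1$ is already a global section.

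For the second assertion, the paper avoids monodromy entirely: since $E_{\Zkzero}$ is a global diffeomorphism of $B$ after shrinking, every nonzero class in $\Period(U)/\Res{\Period(B)}_U$ has a representative of the form $f\, E_{\Zkzero}^* \der c_2$ with $f$ nowhere vanishing, and the sign of $f$ picks out a generator $\alpha_U$ in a manifestly globally consistent way. Your monodromy argument is correct (the monodromy fixes the global section $\alpha$ and lies in $\mathrm{SL}(2,\Z)$, hence is unipotent and acts trivially on the quotient), and it has the conceptual virtue of explaining why the quotient is constant without privileging one Eliasson chart; but it requires the auxiliary justification that the monodromy has determinant $+1$, which the paper's direct argument sidesteps.
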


There are some further results for the smooth structure of the neighborhoods of singular fibers in~\cite{MR4057723}.

\section{Symplectic classification theorem} \label{sec:invariant-classification}

In this section we state the main theorem of the paper (\cref{thm:invariant-bijection}) formulated by the results from \cref{sec:preliminary}.
As the multiple singular points on a fiber lie in no particular order, one must make a number of choices in the construction of the invariant.
For a precise formulation of our result we need to first deal with issues of orientations and directions.
The result announced in the introduction of the paper is stated precisely as a consequence of our main theorem (\cref{cor:invariant-bijection-quotient}).

\subsection{Directions and singularity atlas} \label{ssec:direction}

Fix a $k \in \N$ and recall \cref{def:int-sys-ff}.
Let $(M, \omega, F) \in \IntSysFFk{k}$, $B = F(M)$, and $\mathcal{F}$ be the singular fiber.
Shrink $M$ to a saturated neighborhood of $\mathcal{F}$ if necessary.
Then by \cref{lem:semiglobal-semitoricity} $\Period(B)$ and $\Res{(\Period / \Period(B))}_{B_\mathrm{r}}$ can be viewed as infinite cyclic groups.

\begin{definition} \label{def:direction}
  A pair $(\alpha_1, \alpha_2)$ is a \emph{direction} of $(M, \omega, F)$ if $\alpha_1$ is a generator of $\Period(B)$ and $\alpha_2$ is an generator of $\Res{(\Period / \Period(B))}_{B_\mathrm{r}}$.
  We call $\alpha_1$ the \emph{$J$-direction} and $\alpha_2$ the \emph{$H$-direction}.
  We denote by $\Dir(M, \omega, F)$ the set of directions of $(M, \omega, F)$.
\end{definition}

Given a direction as in \cref{def:direction} we have
\begin{align*}
  \Period(B) = \Z \alpha_1, \qquad \Res{(\Period / \Period(B))}_{B_\mathrm{r}} = \Z \alpha_2, \qquad \Period(U) = \Z \Res{\alpha_1}_U \oplus \Z \Res{\alpha_2}_U.
\end{align*}
for any simply connected open set $U \subset B_\mathrm{r}$ and note that $\alpha_1$ is a $1$-form on $B$ while $\alpha_2$ represents a $1$-form on open subsets of $B_\mathrm{r}$ modulo integer multiples of $\alpha_1$.

\begin{remark} \label{rem:J-H-semitoric-system}
  The use of the letters $J$ and $H$ is inspired by the notations in semitoric systems where the momentum maps are usually written as $(J, H)$ such that the flow of $\mathcal{X}_J$ is $2\pi$-periodic.
\end{remark}

\inputfigure{singular-fiber}{singular-fiber}{Reduction of the singular fiber.
The quotient space $\mathcal{F} / \mathbb{S}^1$, is a circle with $k$ marked points.
Compare $\mathcal{F} / \mathbb{S}^1$ with $\Gamma_k$, which is the cycle graph with vertex set $\Z_k$ and set of edges $\{(j, j+\Zkone) \mid j \in \Z_k\}$.
The automorphism group $D_k$ of $\Gamma_k$ is isomorphic to the group generated by $\gamma_Y$ and $\theta_p$ (see \cref{eq:def-gamma-X,eq:def-gamma-Y,eq:def-theta-p}).
That is not a coincidence.}

The set $\Dir(M, \omega, F)$ contains $4$ different directions.
Recall from \cref{ssec:normal-form} that near any focus-focus singular point $m_j \in M$ there is an Eliasson local chart.

\begin{definition} \label{def:compatible-local-chart}
  Let $(\alpha_1, \alpha_2) \in \Dir(M, \omega, F)$ and let $\Pa{m_j}_{j \in \Z_k} = \Crit(\mathcal{F})$.
  An Eliasson local chart $(\psi_j, E_j)$ near $m_j \in M$ is \emph{compatible with the direction $(\alpha_1, \alpha_2)$} if $E_j^* \der c_1 = \alpha_1$ and $E_j^* \der c_2$ at non-origin points are linear combinations of $\alpha_1$ and (on an open subset any representative of) $\alpha_2$ with positive $\alpha_2$-coefficients.
  A collection $\Pa{(\psi_j, E_j)}_{j \in \Z_k}$ where $(\psi_j, E_j)$ is an Eliasson local chart near $m_j$, $j \in \Z_k$, is called a \emph{singularity atlas} of $(M, \omega, F)$.

  A singularity atlas is \emph{compatible with the direction $(\alpha_1, \alpha_2)$} if for every $j \in \Z_k$, $(\psi_j, E_j)$ is compatible with $(\alpha_1, \alpha_2)$, and for any flow line of $\mathcal{X}_{E_{\Zkzero}^* \der c_1}$ in $\mathcal{F}$, whenever the $\upalpha$-limit point is labeled $m_j$, $j \in \Z_k$, its $\upomega$-limit point is labeled $m_{j+\Zkone}$.
\end{definition}

\begin{lemma} \label{lem:existence-compatible-singularity-atlas}
  Given $(\alpha_1, \alpha_2) \in \Dir(M, \omega, F)$ and $m_{\Zkzero} \in \Crit(\mathcal{F})$, there is a unique way to label $\Crit(\mathcal{F}) = \Pa{m_j}_{j \in \Z_k}$ such that $(M, \omega, F)$ has a singularity atlas $\Pa{(\psi_j, E_j)}_{j \in \Z_k}$ compatible with $(\alpha_1, \alpha_2)$.
\end{lemma}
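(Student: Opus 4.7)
The plan is to split the problem into two parts: first, at each critical point $m \in \Crit(\mathcal{F})$, construct an Eliasson local chart compatible with $(\alpha_1, \alpha_2)$; then fix the cyclic labeling uniquely from the topology of $\mathcal{F}$ combined with the sign data of the charts.

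For the first part, apply \cref{thm:eliasson-focus} at $m$ to obtain some Eliasson chart $(\psi, E)$. The pullback $E^* \der c_1$ generates a $2\pi$-periodic flow near $m$ (because $X_{\der c_1}$ is the $\mathbb{S}^1$-generator in the local model), so as a germ at $0$ it lies in $\Period$; combined with \cref{lem:semiglobal-semitoricity} this forces $E^* \der c_1 = \pm \alpha_1$. At a regular value $b \in B_\mathrm{r}$, decompose $E^* \der c_2(b) \in T^*_b B$ in the $\R$-basis $(\alpha_1(b), \alpha_2(b))$; by the constant-sheaf description of $\Period/\Period(B)|_{B_\mathrm{r}}$ in \cref{lem:semiglobal-semitoricity}, the coefficient of $\alpha_2$ is a nonvanishing smooth function of $b$, and since the punctured base is connected it has a definite global sign. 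Now \cref{lem:local-flat} and its pair set \eqref{eq:local-model-automorphism} tell us that composing $(\psi, E)$ with $(\varphi_X, G_X)$ flips the sign of $c_1$ while composing with $(\varphi_Y, G_Y)$ flips the sign of $c_2$; applying at most one of each yields a chart with $E^* \der c_1 = \alpha_1$ and with positive $\alpha_2$-coefficient in $E^* \der c_2$, establishing the pointwise condition of \cref{def:compatible-local-chart} at $m$.

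For the second part, use the topology of the singular fiber. \cref{thm:smooth-fiber-focus} says $\mathcal{F}$ is a $k$-fold wedge of spheres, the $\mathbb{S}^1$-action of $X_{\alpha_1}$ has fixed-point set $\Crit(\mathcal{F})$, and $\mathcal{F}/\mathbb{S}^1$ is a topological circle with $k$ marked points (the cycle graph depicted in \cref{fig:singular-fiber}). Near $m_{\Zkzero}$ the compatible chart identifies $\mathcal{F}$ with $\R^4_\mathrm{u} \cup \R^4_\mathrm{s}$, and the positivity of the $\alpha_2$-coefficient picks $\R^4_\mathrm{u}$ as the unique ``outgoing'' branch along which the Hamiltonian flow of $E_{\Zkzero}^* \der c_2$ exits $m_{\Zkzero}$ forward in time, since in the local model $0$ is the $\upalpha$-limit on $\R^4_\mathrm{u}$ under this flow. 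Following this cylinder across $\mathcal{F}$ reaches exactly one other critical point, which we must label $m_{\Zkone}$. Iterating, and using that $\mathcal{F}/\mathbb{S}^1$ is a cycle of length $k$, the walk closes after exactly $k$ steps, producing a bijection $\Z_k \to \Crit(\mathcal{F})$. The cyclic $\upalpha$-limit-to-$\upomega$-limit condition of \cref{def:compatible-local-chart} holds by construction (the flow line from $m_j$ to $m_{j+\Zkone}$ is the connecting cylinder), and uniqueness follows because at each step the outgoing branch is forced by the sign conditions in the compatible chart, so no alternative labeling can arise.

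The main obstacle is the first step: since $E^* \der c_2$ is not itself a period form in the local model, one must justify extracting a well-defined sign for its $\alpha_2$-coefficient. This is exactly where \cref{lem:semiglobal-semitoricity} does the work — it shows that the stalk of $\Period$ at every regular $b$ is rank $2$ with basis $(\alpha_1(b), \alpha_2(b))$, which in particular is an $\R$-basis of $T^*_b B$ in which the arbitrary $1$-form $E^* \der c_2(b)$ has a unique real-valued expansion, and nonvanishing smoothness plus connectedness of $B_\mathrm{r}$ near $0$ give the definite global sign that $\varphi_Y$ then reverses as needed.
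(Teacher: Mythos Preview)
Your proposal is correct and follows essentially the same approach as the paper's proof: first adjust each Eliasson chart by composing with one of the four pairs in \eqref{eq:local-model-automorphism} to achieve compatibility with $(\alpha_1,\alpha_2)$, then use the wedge-of-spheres description of $\mathcal{F}$ from \cref{thm:smooth-fiber-focus} to walk along the outgoing branch at each singularity and produce the cyclic labeling. The paper's proof is terser (it simply invokes \eqref{eq:local-model-automorphism} without spelling out why $E^*\der c_1=\pm\alpha_1$ or why the $\alpha_2$-coefficient of $E^*\der c_2$ has a definite sign), so your extra paragraph justifying the well-definedness of that sign via \cref{lem:semiglobal-semitoricity} is a welcome elaboration rather than a different argument.
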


\begin{proof}
  By possibly composing with one of the pairs of maps in \cref{eq:local-model-automorphism}, we obtain a chart $(\psi_j, E_j)$ near any $m_j$ compatible with $(\alpha_1, \alpha_2)$.
  By \cite[Theorem 5.1]{MR1389366}, the trajectories of $\mathcal{X}_{E_{\Zkzero}^* \der c_2}$ in $\mathcal{F}$ away from $\Crit(\mathcal{F})$ have limits at different critical points (except when $k = 1$).
  On two spheres in $\mathcal{F}$ intersecting at a critical point, the trajectories go to opposite directions relative to that point.
  Hence we can sort $\Pa{m_j}_{j \in \Z_k}$ one by one such that $\Pa{(\psi_j, E_j)}_{j \in \Z_k}$ is compatible with $(\alpha_1, \alpha_2)$.
\end{proof}

\subsection{Construction of the invariants} \label{ssec:invariant-construction}

Let $(M, \omega, F, (\alpha_1, \alpha_2), m_{\Zkzero})$ be a \emph{marked directed integrable system}, which means that $(M, \omega, F) \in \IntSysFFk{k}$ with \emph{direction} $(\alpha_1, \alpha_2) \in \Dir(M, \omega, F)$ and \emph{mark} $m_{\Zkzero} \in \Crit(\mathcal{F})$.
Let $\mathcal{F}$ be the singular fiber.
Let $\Pa{m_j}_{j \in \Z_k} = \Crit(\mathcal{F})$ and let $\Pa{(\psi_j \colon V_j \to \psi_j(V_j), E_j \colon U \to U_j)}_{j \in \Z_k}$ be a singularity atlas compatible with the direction $(\alpha_1, \alpha_2)$.
Let $W = F^{-1}(U)$.

To zoom out from local to semiglobal, we will extend the isomorphism $(\psi_j^{-1}, E_j^{-1})$ along the trajectories.
For $j \in \Z_k$, let $W_j$ be the minimal invariant subset of $M$ by $\Psi$ containing $V_j \in \mathcal{N}(M, m_j)$; in other words,
\begin{equation} \label{eq:def-Mj}
  W_j = \bigcup_{\alpha \in \Omega^1(F(V_j))} \Psi_\alpha(V_j);
\end{equation}
see \cref{fig:injectivity}.
Let $W_{j, j+\Zkone} = W_j \cap W_{j+\Zkone}$; then $W_{j, j+\Zkone}$ is a neighborhood of the orbit of $\Psi$ in $\mathcal{F}$ where $\mathcal{X}_{E_j^* \der c_2}$ flows from $m_j$ to $m_{j+\Zkone}$.
It is important to note that both $(W_j, \omega_0, \Res{F}_{W_j})$ and $(W_{j, j+\Zkone}, \omega_0, \Res{F}_{W_{j, j+\Zkone}})$ are complete but only the latter one is vertically transitive.

\inputfigure{injectivity}{injectivity}{The set $W_{\Zkzero}$ consists of the shaded region in $\mathcal{F}$ and whole regular fibers nearby.}

We construct the first set of the invariants: we split the period form $\alpha_2$ into the singular part ``across singular points'' and the regular part.
The regular part is an invariant.
Recall that local sections of $\Period$ consist of closed $1$-forms and then it makes sense to say whether a local section of $\Omega^1 / 2\pi \Period$ is closed, the subpresheaf of which is denoted by $Z^1 / 2\pi \Period$.

\begin{lemma} \label{lem:definition-sigma}
  The closed section
  \begin{equation} \label{eq:definition-sigma}
    \sigma = -\sum_{j \in \Z_k} E_j^* \kappa = 2\pi \alpha_2 - \sum_{j \in \Z_k} E_j^* \kappa \in (Z^1 / 2\pi \Period)(U \cap B_\mathrm{r})
  \end{equation}
  has a representative in $Z^1(U \cap B_\mathrm{r})$ with a smooth extension in $Z^1(U)$, where $\kappa$ is as defined in \cref{eq:kappa-definition}.
  In other words, there is a $\tilde \sigma \in \Omega^1(U)$ such that for any $b \in U \cap B_\mathrm{r}$ there is a $U_b \in \mathcal{N}(B_\mathrm{r}, b)$ such that $\Res{\tilde \sigma}_{U_b} \in \Res{\sigma}_{U_b} + 2\pi \Period(U_b)$.
\end{lemma}

\begin{proof}
  For $j \in \Z_k$, we can extend $(\psi_j^{-1}, E_j^{-1})$ by \cref{lem:symplecto-extension} to morphisms between vertically transitive integrable systems
  \begin{align*}
    (\lambda_j^+, E_j^{-1}) &\colon (W \setminus D^0_{\mathrm{s}}, \omega_0, \Res{q}_{W \setminus D^0_{\mathrm{s}}}) \to (W_{j, j+\Zkone}, \omega, \Res{F}_{W_{j, j+\Zkone}}), \\
    (\lambda_j^-, E_j^{-1}) &\colon (W \setminus D^0_{\mathrm{u}}, \omega_0, \Res{q}_{W \setminus D^0_{\mathrm{u}}}) \to (W_{j-\Zkone, j}, \omega, \Res{F}_{W_{j-\Zkone, j}}),
  \end{align*}
  respectively which must coincide in there intersection $W \cap M_{\mathrm{r}}$, and then glue them to a morphism
  \begin{equation*}
    (\lambda_j, E_j^{-1}) \colon (W, \omega_0, \Res{q}_W) \to (W_j, \omega, \Res{F}_{W_j}).
  \end{equation*}
  For $j \in \Z_k$, let $P_j, Q_j \colon U \to M$ be Lagrangian sections of $F$ given by $P_j(c) = \lambda_j(1, c)$, $Q_j(c) = \lambda_j(c, 1)$.
  Since $\varphi_j$ is a symplectomorphism, by \cref{lem:symplecto-endo} and the definition of $\kappa$ in \cref{eq:kappa-definition}, the translation form $\tau^{P_jQ_j} = E_j^* \kappa$ is a section in $(Z^1 / 2\pi \Period)(U \cap B_\mathrm{r})$.

  Note that the images of $Q_j$ and $P_{j+\Zkone}$ lie in $W_j$.
  Then there are two smooth sections of $q$ in $\R^4 \setminus D^0_{\mathrm{s}}$ mapped to $Q_j$ and $P_{j+\Zkone}$ by $\lambda_j$ and let $\tau \in (\Omega^1 / 2\pi \Period^{(\R^4, \omega_0, q)})(\R^2)$ be the translation form between them.
  By explicitly calculating $\tau$ using \cref{eq:flow-euclidean}, it is represented by a smooth form on $\R^2$.
  So, $\tau^{Q_j P_{j+\Zkone}} = G^* \tau \in (\Omega^1 / 2\pi \Period^{(M, \omega, F)})(U \cap B_\mathrm{r})$ is represented by a smooth form on $U$, too.

  Since $\alpha_2$ is a section in $\Period(B)$ and $\kappa \in (Z^1 / 2\pi \Period)(\R^2_{\mathrm{r}})$ (hence are closed), the section
  \begin{align*}
    \sigma =2\pi \alpha_2 - \sum_{j \in \Z_k} E_j^* \kappa = 2\pi \alpha_2 - \sum_{j \in \Z_k} \tau^{P_jQ_j} = \sum_{j \in \Z_k} \tau^{Q_jP_{j+\Zkone}} \in (\Omega^1 / 2\pi \Period)(U \cap B_\mathrm{r})
  \end{align*}
  extends to a smooth and closed representative on $U$ (hence is in $(Z^1 / 2\pi \Period)(U \cap B_\mathrm{r})$).
\end{proof}

Let $\R[[T^*_0\R^2]]$ be the space of formal power series generated by the elements of a basis of $T^*_0\R^2$, or equivalently, $\R[[T^*_0U]]$ is the direct sum of symmetric tensor products of $T^*_0U$.
The \emph{Taylor series} $\Tl_0[f]$ of a smooth function $f \colon U \to \R$ at $0$ may be viewed as an element in $\R[[T^*_0U]]$.
Recall $\R[[X, Y]]$, the $\R$-algebra of formal power series in two variables $X$ and $Y$.
Let
\begin{equation*}
  \mathsf{R} = \Set{\mathsf{u}(X, Y) = \sum_{p + q \geq 1} \mathsf{u}^{(p, q)} X^p Y^q \mmid \mathsf{u}^{(p, q)} \in \R}
\end{equation*}
be the ideal of $\R[[X, Y]]$ consisting of those with no constant term.
Let $X = \der c_1$, $Y = \der c_2$ be the variables of the formal power series.
Then the diffeomorphism $E_j \colon U \to U_j$ identifies two algebras by $E_j^* \colon \R[[X, Y]] \to \R[[T^*_0U]]$.

Let $A_1$ is the action integral with $\der A_1 = \alpha_1$ and $A_1(0) = 0$.
Let $\mathsf{A}_1 = \Tl_0[A_1] \in \R[[T^*_0U]]$ and we have $(E_j^{-1})^* \mathsf{A_1} = X$ for any $j \in \Z_k$.
Since $\sigma$ defined in \cref{eq:definition-sigma} is closed, by possibly shrinking $U$ so as to be simply connected, there is a smooth $S \colon U \to \R$ such that $S(0) = 0$ and $\der S$ representats $\sigma$ in $\Omega^1(U)$.
The germ of $S$ is unique up to adding integer multiples of $2\pi A_1$.

\begin{definition} \label{def:action-Taylor-series}
  We call $\sigma \in (\Omega^1 / 2\pi \Period)(U \cap B_\mathrm{r})$ the \emph{desingularized period form}.
  We call the coset $S + 2\pi A_1 \Z$ the \emph{desingularized action integral}.
  Let $\mathsf{R}_{2\pi X} = \mathsf{R} / (2\pi X)\Z$ and let $\mathsf{S} = \Tl_0[S] + 2\pi \mathsf{A}_1 \Z \in \R[[T^*_0U]]_0 / (2\pi \mathsf{A}_1) \Z$.
  For any $m_j, j \in \Z_k$, let
  \begin{equation*}
    \mathsf{s}_j(X, Y) \defeq (E_j^{-1})^* \mathsf{S}.
  \end{equation*}
  We call $\mathsf{s}_j  \in \mathsf{R}_{2\pi X}$ the \emph{action Taylor series at $m_j$}.
\end{definition}

We construct the second set of the invariants: these invariants are Taylor series reflecting the difference between the Eliasson local charts at different singular points.

\begin{definition} \label{def:transition-cocycle}
  Let $\mathcal{R}_+ = \Set{g \colon \R^2 \to \R \mmid \partial g/\partial c_2 > 0}$ be a group with the product $(g_1 \cdot g_2)(c_1, c_2) = g_1(c_1, g_2(c_1, c_2))$ for any $g_1, g_2 \in \mathcal{R}_+$.
  Let $G_{j, \ell} = E_\ell \circ E_j^{-1}$ and $g_{j, \ell} = \proj_2 \circ G_{j, \ell} \in \mathcal{R}_+$.
  Then we have $(G_{j, \ell})(c_1, c_2) = (c_1, g_{j, \ell}(c_1, c_2))$.
  We call $\Pa{g_{j, \ell}}_{j, \ell \in \Z_k}$ the set of \emph{momentum transitions}.

  Let $\mathsf{R}_+ = \Set{\mathsf{g} \in \mathsf{R} \mmid \mathsf{g}^{(0, 1)} > 0}$ be a group with the product $(\mathsf{g}_1 \cdot \mathsf{g}_2)(X, Y) = \mathsf{g}_1(X, \mathsf{g}_2(X, Y))$ for any $\mathsf{g}_1, \mathsf{g}_2 \in \mathsf{R}_+$.
  Let $\mathsf{g}_{j, \ell} = \Tl_0[g_{j, \ell}] \in \mathsf{R}_+$.
  They satisfy the cocycle condition $\mathsf{g}_{j, \ell} \cdot \mathsf{g}_{\ell, p} = \mathsf{g}_{j, p}$.
  We call $\mathsf{g}_{j, \ell}$ the \emph{transition Taylor series} from $m_j$ to $m_\ell$.
  We call $\Pa{\mathsf{g}_{j, \ell}}_{j, \ell \in \Z_k}$ the \emph{transition cocycle}.
\end{definition}

The group structure of $\mathcal{R}_+$ is explicitly verified as follows:
for $g_1, g_2, g_2 \in \mathcal{R}_+$,
\begin{align*}
  \identity &= c_2, \\
  (g_1^{-1})(c_1, c_2) &= \proj_2 \circ G_1^{-1} (c_1, c_2)\text{ where } G_1(c_1, c_2) = (c_1, g_1(c_1, c_2)), \\
  ((g_1 \cdot g_2) \cdot g_3)(c_1, c_2) &= g_1(c_1, g_2(c_1, g_3(c_1, c_2))) = (g_1 \cdot (g_2 \cdot g_3))(c_1, c_2).
\end{align*}
The group structure of $\mathsf{R}_+$ is similarly verified.

\subsection{Moduli spaces and main theorem} \label{ssec:main-theorem}

\begin{definition} \label{def:moduli-space}
  Let $\Mffk{k}$ be the \emph{collection of integrable systems} in $\IntSysFFk{k}$ modulo isomorphisms of saturated neighborhoods of the singular fiber.
  Given two marked directed integrable systems $(M, \omega, F, (\alpha_1, \alpha_2), m_{\Zkzero})$ and $(M', \omega', F', (\alpha'_1, \alpha'_2), m'_{\Zkzero})$, an isomorphism $(\varphi, G)$ from $(M, \omega, F)$ to $(M', \omega', F')$ is said to \emph{preserve the direction and the mark} if $\varphi(m_{\Zkzero}) = m'_{\Zkzero}$ and $(G^* \alpha'_1, G^* \alpha'_2) = (\alpha_1, \alpha_2)$.
  Let $\Mffobk{k}$ be the \emph{collection of marked directed integrable systems} modulo isomorphisms of saturated neighborhoods of the singular fiber preserving the direction and the mark.
\end{definition}

Let $(M, \omega, F, (\alpha_1, \alpha_2), m_{\Zkzero})$ be a marked directed integrable system with $(M, \omega, F) \in \IntSysFFk{k}$.
Let $\Pa{m_j}_{j \in \Z_k} = \Crit(\mathcal{F})$ and suppose $\Pa{(\psi_j, E_j)}_{j \in \Z_k}$ is a singularity atlas compatible with $(\alpha_1, \alpha_2)$.
Let $\Pa{\mathsf{s}_j}_{j \in \Z_k}$ be the $k$-tuple of action Taylor series and let $\Pa{\mathsf{g}_{j, \ell}}_{j, \ell \in \Z_k}$ be the transition cocycle.
These series are constrained by the following relations:
\begin{equation} \label{eq:invariant-constraint}
\left.
\begin{aligned}
  \mathsf{s}_j = \mathsf{s}_\ell \cdot \mathsf{g}_{j, \ell} &\text{ for}~j, \ell \in \Z_k; \\
  \mathsf{g}_{j, j}(X, Y) = Y &\text{ for}~j \in \Z_k; \\
  \mathsf{g}_{\ell, p} \cdot \mathsf{g}_{j, \ell} = \mathsf{g}_{j, p} &\text{ for}~j, \ell, p \in \Z_k.
\end{aligned}
\right\}
\end{equation}

\begin{theorem} [Main Theorem] \label{thm:invariant-bijection}
  There is a bijection
  \begin{align*}
    \Phi \colon \Mffobk{k} &\to \Iffobk{k} \defeq \Set{\Pa{\Pa{\mathsf{s}_j}_{j \in \Z_k}, \Pa{\mathsf{g}_{j, \ell}}_{j, \ell \in \Z_k}} \in \mathsf{R}_{2\pi X}^k \times \mathsf{R}_+^{k^2} \mmid \cref{eq:invariant-constraint}} \\
    [(M, \omega, F, (\alpha_1, \alpha_2), m_{\Zkzero})] &\mapsto \Pa{\mathsf{s}_{\Zkzero}, \dotsc, \mathsf{s}_{\Zknegone}, \mathsf{g}_{\Zkzero, \Zkzero}, \dotsc, \mathsf{g}_{\Zkzero, \Zknegone}, \dotsc, \mathsf{g}_{\Zknegone, \Zkzero}, \dotsc, \mathsf{g}_{\Zknegone, \Zknegone}}.
  \end{align*}
\end{theorem}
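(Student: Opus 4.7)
The plan is to verify that $\Phi$ is well-defined, injective, and surjective, leaning on the tools developed in the earlier sections: Eliasson's theorem (\cref{thm:eliasson-focus}), the flat-diffeomorphism classification (\cref{lem:local-flat}), the extension lemma (\cref{lem:symplecto-extension}), and the construction of the desingularized period form (\cref{lem:definition-sigma}).

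For \emph{well-definedness}, the invariants must depend only on the equivalence class in $\Mffobk{k}$. By \cref{lem:local-flat}, two Eliasson charts at $m_j$ compatible with $(\alpha_1, \alpha_2)$ differ by a germ of diffeomorphisms of the form $(c_1, c_2) \mapsto (c_1, c_2 + \inftes)$; such a flat shift leaves each $\mathsf{g}_{j,\ell}$ unchanged at the Taylor level, and \cref{lem:kappa-lemma} ensures that $\sum_{j \in \Z_k} E_j^* \kappa$ is invariant modulo $\inftes\, \der c_1 + \inftes\, \der c_2$, so each $\mathsf{s}_j$ is atlas-independent. A germ of isomorphisms preserving orientation and basepoint transports one compatible atlas to another of the same kind; since $\kappa$ and the local data are pullback-natural, the invariants transfer identically. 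The relations in \cref{eq:invariant-constraint} are immediate from the definitions.

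For \emph{injectivity}, suppose $(M, \omega, F)$ and $(M', \omega', F')$ yield the same tuple. Fix compatible atlases labeled to match the basepoints via \cref{lem:existence-compatible-singularity-atlas}. Equality of $\mathsf{g}_{j,\ell}$ combined with \cref{lem:local-flat} and \cref{lem:varphi-G} lets me adjust one atlas so that $\psi'_j \circ \psi_j^{-1}$ extends, by \cref{lem:extend-local-semiglobal}, to a germ of isomorphisms $\varphi_j \colon M_j \to M'_j$ for every $j \in \Z_k$. On the overlap $M_{j, j+\Zkone}$ the maps $\varphi_j$ and $\varphi_{j+\Zkone}$ differ by a gauge symplectomorphism $\Psi_\beta$ for some smoothable closed $1$-form $\beta$ on $B_{\mathrm{r}}$; equality of $\mathsf{s}_{\Zkzero} \in \mathsf{R}_{2\pi X}$ forces $\beta$ to be exact with flat Taylor expansion, and \cref{lem:flat-times-tempered} together with \cref{lem:symplecto-auto-singular} allow me to absorb $\beta$ into a single global gauge, producing the desired orientation- and basepoint-preserving isomorphism.

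For \emph{surjectivity}, given $((\mathsf{s}_j)_{j \in \Z_k}, (\mathsf{g}_{j,\ell})_{j, \ell \in \Z_k}) \in \Iffobk{k}$, I realize the tuple by gluing $k$ copies of the local normal form. Apply Borel's lemma to pick smooth representatives $g_{\Zkzero,j}$ of $\mathsf{g}_{\Zkzero,j}$, and set $g_{j,\ell} := g_{\Zkzero,\ell} \cdot g_{\Zkzero,j}^{-1}$ so that the strict cocycle holds; pick $S$ with $\Tl_0[S] = \mathsf{s}_{\Zkzero}$ modulo $2\pi X$. Then glue $k$ neighborhoods of $\mathcal{F}_0 \subset \R^4$ along their regular parts using \cref{lem:varphi-G}, with base transitions $G_{j, j+\Zkone}$ between consecutive Eliasson coordinates and translation forms in the action direction determined by $\der S$ together with the $\kappa$-contributions near each $m_j$; the relation $\mathsf{s}_j = \mathsf{s}_\ell \cdot \mathsf{g}_{j,\ell}$ ensures the gluing closes up consistently into a symplectic manifold whose invariants are precisely the given tuple. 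The main obstacle throughout is this gluing: one must reconcile the logarithmic singular part of $\sigma$ contributed by $\kappa$ near each $m_j$ with the smooth base transitions, so that the combined closed $1$-form is smoothable on $B$ with the prescribed Taylor series; the key technical inputs are \cref{lem:kappa-lemma}, \cref{lem:flat-times-tempered}, and \cref{lem:local-flat}.
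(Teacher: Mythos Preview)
Your outline follows the paper's three-part strategy (well-definedness, injectivity, surjectivity), and your well-definedness argument is essentially the paper's Case~1 of \cref{lem:invariant-uniqueness-group-action}. However, your injectivity argument has a genuine gap at the step ``absorb $\beta$ into a single global gauge.'' Even after matching all Taylor series, the overlap discrepancies are flat but \emph{nonzero} closed $1$-forms, and it is not clear that a single gauge $\Psi_\beta$ kills them all simultaneously, nor that such a gauge extends smoothly across every singular point $m'_j$ and closes up around the cycle. The paper handles this in two moves you do not mention. First, a Moser-type isotopy (\cref{lem:change-A2}) replaces $F'$ by $E^{-1}\circ F'$ so that the period forms $(\alpha_1,\alpha_2)$ and $(\alpha'_1,\alpha'_2)$ literally coincide, not merely up to flat terms; this is where \cref{lem:flat-times-tempered} is actually used, to show the isotopy vector field $Y_t$ is smooth across the origin. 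Second, rather than a single global correction, the paper builds $\wt\varphi_{j+\Zkone}$ from $\wt\varphi_j$ inductively, composing with an auxiliary germ of automorphisms $\wt\mu_{j,j+\Zkone}$ of $(M',\omega',\mathcal{F}')$ that forces agreement on $M_{j,j+\Zkone}$; one then checks that the last overlap $M_{\Zknegone,\Zkzero}$ closes up automatically by continuity. Your sketch collapses these two distinct mechanisms into one unexplained sentence.

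Your surjectivity sketch is close to the paper's gluing, but you omit the verification that the quotient is Hausdorff and that $F$ is proper. In the paper this is the content of \cref{lem:Psi-tau-discrete} and \cref{lem:symplectic-manifold-structure}: one introduces an explicit function $\rho$ on $kW_{\mathrm{r}}$ whose jump under each gluing map $\varphi_{[j,j+\overline{p}]_p}$ is controlled by a function $L$ with $\lim_{c\to 0} L(c)=\infty$, which both rules out accumulation of $\mathcal{G}$-orbits (Hausdorffness) and shows every point is equivalent to one in the compact set $kD$ (properness). This is not a formality---without it the glued space could fail to be a manifold---and neither \cref{lem:varphi-G} nor the cocycle relation $\mathsf{s}_j=\mathsf{s}_\ell\cdot\mathsf{g}_{j,\ell}$ supplies it.
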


Since the projection
\begin{equation*} \begin{split}
  \Iffobk{k} &\to \mathsf{R}_{2\pi X} \times \mathsf{R}_+^{k-1}, \\
    \Pa{\Pa{\mathsf{s}_j}_{j \in \Z_k}, \Pa{\mathsf{g}_{j, \ell}}_{j, \ell \in \Z_k}} &\mapsto \Pa{\mathsf{s_{\Zkzero}}, \mathsf{g}_{\Zkzero, \Zkone}, \mathsf{g}_{\Zkone, \overline{2}}, \dotsc, \mathsf{g}_{\overline{k-2}, \overline{k-1}}}.
\end{split} \end{equation*}
is one-to-one, equivalently, we have the following two statements:
\begin{itemize}
  \item \emph{Injectivity/Uniqueness}: two equivalence classes of semiglobal models of integrable systems $[(M, \omega, F, (\alpha_1, \alpha_2), m_{\Zkzero})]$ near a focus-focus fiber are isomorphic preserving the direction and the mark if and only if they have the same Taylor series invariant $\Pa{\mathsf{s_{\Zkzero}}, \mathsf{g}_{\Zkzero, \Zkone}, \mathsf{g}_{\Zkone, \overline{2}}, \dotsc, \mathsf{g}_{\overline{k-2}, \overline{k-1}}}$.
  \item \emph{Surjectivity/Existence}: given the abstract ingredient of a $k$-tuple of formal power series $\Pa{\mathsf{s_{\Zkzero}}, \mathsf{g}_{\Zkzero, \Zkone}, \mathsf{g}_{\Zkone, \overline{2}}, \dotsc, \mathsf{g}_{\overline{k-2}, \overline{k-1}}} \in \mathsf{R}_{2\pi X} \times \mathsf{R}_+^{k-1}$ there exists a unique equivalence class of semiglobal models of integrable systems $[(M, \omega, F, (\alpha_1, \alpha_2), m_{\Zkzero})]$ near a focus-focus fiber which has Taylor series invariants precisely the one we started with.
\end{itemize}
We prove \cref{thm:invariant-bijection} in the remaining sections.
In the rest of this section, we are going to prove that $\Phi$ is a well-defined map.

\begin{remark} \label{rem:compare-Vu-Ngoc}
  When $\mathcal{F}$ is single pinched, we recover the \emph{Taylor series invariant} $(S)^\infty$ from~\cite{MR1941440} by the following relation:
  \begin{equation*}
    \mathsf{s}_{\Zkzero}(X, Y) = (S)^\infty(Y, X) + \frac{\pi}{2} X.
  \end{equation*}
  The addition of $\frac{\pi}{2} X$ is due to a change in convention.
  In this paper, it is the first component of the momentum map which has periodic Hamiltonian vector fields, while in~\cite{MR1941440} it is the second one.
\end{remark}

\subsection{The invariants are well-defined} \label{ssec:invariant-uniqueness}

In this subsection, we are going to show that the output of $\Phi$ does not depend on the choice of the singularity atlas, and we also want to know how the Taylor series will change if the direction and the base point change.

Define bijections $\gamma_X$ and $\gamma_Y$ of $\Iffobk{k}$ by
\begin{equation} \begin{split} \label{eq:def-gamma-X}
  \gamma_X \Pa{\dotsc, \mathsf{s}_j, \dotsc, \mathsf{g}_{j, \ell}, \dotsc} &= \Pa{\dotsc, \mathsf{s}'_j, \dotsc, \mathsf{g}'_{j, \ell}, \dotsc}, \\
  \mathsf{s}'_j(X, Y) &= \mathsf{s}_j(-X, Y) + k \pi X, \\
  \mathsf{g}'_{j, \ell}(X, Y) &= \mathsf{g}_{j, \ell}(-X, Y);
\end{split} \end{equation}
and
\begin{equation} \begin{split} \label{eq:def-gamma-Y}
  \gamma_Y \Pa{\dotsc, \mathsf{s}_j, \dotsc, \mathsf{g}_{j, \ell}, \dotsc} &= \Pa{\dotsc, \mathsf{s}''_j, \dotsc, \mathsf{g}''_{j, \ell}, \dotsc}, \\
  \mathsf{s}''_j(X, Y) &= -\mathsf{s}_{-j}(X, -Y), \\
  \mathsf{g}''_{j, \ell}(X, Y) &= -\mathsf{g}_{-j, -\ell}(X, -Y).
\end{split} \end{equation}
Define a bijection $\theta_p$ of $\Iffobk{k}$, $p \in \Z_k$, by
\begin{equation} \begin{split} \label{eq:def-theta-p}
  \theta_p \Pa{\dotsc, \mathsf{s}_j, \dotsc, \mathsf{g}_{j, \ell}, \dotsc} = \Pa{\dotsc, \mathsf{s}_{j+p}, \dotsc, \mathsf{g}_{j+p, \ell+p}, \dotsc}.
\end{split} \end{equation}

The proof of the well-definedness lemma follows from \cite[Section 4]{MR1941440} and \cite[Lemma 4.55]{MR3765971}.

\begin{lemma} \label{lem:invariant-uniqueness}
  The map $\Phi \colon \Mffobk{k} \to \Iffobk{k}$ is well defined.
\end{lemma}

\begin{proof}
  \emph{Independence of the choice of the singularity atlas.}

  Let $[(M, \omega, F, (\alpha_1, \alpha_2), m_{\Zkzero})] \in \Mffobk{k}$ and let $\Pa{(\psi_j, E_j)}_{j \in \Z_k}$ and $\Pa{(\psi'_j, E'_j)}_{j \in \Z_k}$ be two singularity atlases compatible with $(\alpha_1, \alpha_2)$.
  Let $\Pa{\dotsc, \mathsf{s}_j, \dotsc, \mathsf{g}_{j, \ell}, \dotsc}$ and $\Pa{\dotsc, \mathsf{s}'_j, \dotsc, \mathsf{g}'_{j, \ell}, \dotsc}$ be the outputs of $\Phi$, let $\sigma$ and $\sigma'$ be the desingularized period forms, and let $\Pa{g_{j, \ell}}_{j, \ell \in \Z_k}$ and $\Pa{g'_{j, \ell}}_{j, \ell \in \Z_k}$ be the set of momentum transitions, respectively, of $\Pa{(\psi_j, E_j)}_{j \in \Z_k}$ and $\Pa{(\psi'_j, E'_j)}_{j \in \Z_k}$.
  We aim to show that $\mathsf{s}'_j =\mathsf{s}_j$ and $\mathsf{g}'_{j, \ell} = \mathsf{g}_{j, \ell}$ for $j, \ell \in \Z_k$.

  Since $G_j \defeq E'_j \circ E_j^{-1}$ is a diffeomorphism of neighborhoods of $0$ in $\R^2$ with properties that $\der (\proj_1 \circ G_j) = \der c_1$, $\frac{\partial (\proj_2 \circ G_j)}{\partial c_2} > 0$, and $q \circ \psi'_j \circ \psi_j^{-1} = G_j \circ q$, by \cref{lem:local-flat} we have $G_j(c_1, c_2) = (c_1, c_2 + \inftes)$.
  Then \cref{lem:kappa-lemma}, we have $G_j^* \kappa = \kappa + \inftes \der c_1 + \inftes \der c_2$, and we have
  \begin{align*}
    \mathsf{g}'_{j, \ell} &= \Tl_0[\proj_2 \circ G_\ell \circ G_{j, \ell} \circ G_j^{-1}] = \Tl_0[\proj_2 \circ G_{j, \ell}] = \mathsf{g}_{j, \ell}, \\
    \begin{split}
      \sigma' - \sigma &= \Pa{2\pi \alpha_2 - \sum_{j \in \Z_k} (E'_j)^* \kappa} - \Pa{2\pi \alpha_2 - \sum_{j \in \Z_k} E_j^* \kappa} \\
      &= \sum_{j \in \Z_k} E_j^* \Pa{\kappa - G_j^* \kappa} = \inftes \der c_1 + \inftes \der c_2 + 2\pi \alpha_2 \Z,
    \end{split}
  \end{align*}
  which implies that $\mathsf{S}' = \mathsf{S}, \mathsf{s}'_j =\mathsf{s}_j$ for $j \in \Z_k$.
  \bigskip

  \emph{Independence of the choice of the representative.}

  Let $(M, \omega, F, (\alpha_1, \alpha_2), m_{\Zkzero})$ and $(M', \omega', F', (\alpha'_1, \alpha'_2), m'_{\Zkzero})$ be two marked directed integrable systems related by an isomorphism $(\varphi, G)$ between saturated neighborhoods of $\mathcal{F}$ and $\mathcal{F}'$ preserving the direction and the mark.
  That is to say that $\varphi(m_{\Zkzero}) = m'_{\Zkzero}$ and $(G^* \alpha'_1, G^* \alpha'_2) = (\alpha_1, \alpha_2)$.
  Let
  \begin{align*}
    \Pa{\dotsc, \mathsf{s}_j, \dotsc, \mathsf{g}_{j, \ell}, \dotsc} &= \Phi\Big([(M, \omega, F, (\alpha_1, \alpha_2), m_{\Zkzero})]\Big), \\
    \Pa{\dotsc, \mathsf{s}'_j, \dotsc, \mathsf{g}'_{j, \ell}, \dotsc} &= \Phi\Big([(M', \omega', F', (\alpha'_1, \alpha'_2), m'_{\Zkzero})]\Big).
  \end{align*}
  According to the first part, the above two lines do not depend on the choices of compatible singularity atlases and we aim to show that the above two lines are equal.
  Let $\Pa{(\psi_j, E_j)}_{j \in \Z_k}$ be a singularity atlas of $(M, \omega, F)$ compatible with $(\alpha_1, \alpha_2)$ and then $\Pa{(\psi'_j, E'_j)}_{j \in \Z_k} \defeq \Pa{(\psi_j \circ \varphi^{-1}, E_j \circ G^{-1})}_{j \in \Z_k}$ is a singularity atlas of $(M', \omega', F')$ compatible with $(\alpha'_1, \alpha'_2)$.
  Let $\sigma$ and $\sigma'$ be the desingularized period forms, and then
  \begin{align*}
    G^* \sigma' &= 2\pi G^* \alpha'_2 - \sum_{j \in \Z_k} (E'_j \circ G)^* \kappa = 2\pi \alpha_2 - \sum_{j \in \Z_k} E_j^* \kappa = \sigma, \\
    E'_\ell \circ (E'_j)^{-1} &= (E_\ell \circ G^{-1}) \circ (E_j \circ G^{-1})^{-1} = E_\ell \circ E_j^{-1}.
  \end{align*}
  Hence we have $G \circ \mathsf{S}' = \mathsf{S}$, $\mathsf{s}'_j = \mathsf{s}_j$, and $\mathsf{g}'_{j, \ell} = \mathsf{g}_{j, \ell}$ for $j, \ell \in \Z_k$.
\end{proof}

\begin{lemma} \label{lem:invariant-group-action}
  The map $\Phi \colon \Mffobk{k} \to \Iffobk{k}$ satisfies the relations:
  \begin{equation*}
    \left.
    \begin{aligned}
      \Phi\Big([(M, \omega, F, (-\alpha_1, \alpha_2), m_{\Zkzero})]\Big) &= \gamma_X\Pa{\Phi\Big([(M, \omega, F, (\alpha_1, \alpha_2), m_{\Zkzero})]\Big)}, \\
      \Phi\Big([(M, \omega, F, (\alpha_1, -\alpha_2), m_{\Zkzero})]\Big) &= \gamma_Y\Pa{\Phi\Big([(M, \omega, F, (\alpha_1, \alpha_2), m_{\Zkzero})]\Big)}, \\
      \Phi\Big([(M, \omega, F, (\alpha_1, \alpha_2), m_p)]\Big) &= \theta_p\Pa{\Phi\Big([(M, \omega, F, (\alpha_1, \alpha_2), m_{\Zkzero})]\Big)}, \,\text{for }p \in \Z_k.
    \end{aligned}
    \right.
  \end{equation*}
\end{lemma}

\begin{proof}
  Let $[(M, \omega, F, (\alpha_1, \alpha_2), m_{\Zkzero})] \in \Mffobk{k}$ and let $\Pa{(\psi_j, E_j)}_{j \in \Z_k}$ be a singularity atlas compatible with $(\alpha_1, \alpha_2)$.
  Let $\Pa{(\psi'_j, E'_j)}_{j \in \Z_k}$ be another singularity atlas and let $m'_{\Zkzero}$ be another mark.
  We may need to reorder $\Crit(\mathcal{F})$ to $\Pa{m'_j}_{j \in \Z_k}$ by \cref{lem:existence-compatible-singularity-atlas} so that $(\psi'_j, E'_j)$ is a chart near $m'_j$ for $j \in \Z_k$ and $\Pa{(\psi'_j, E'_j)}_{j \in \Z_k}$ is compatible with some direction $(\alpha'_1, \alpha'_2) \in \Dir(M, \omega, F)$.

  Let
  \begin{align*}
    \Pa{\dotsc, \mathsf{s}_j, \dotsc, \mathsf{g}_{j, \ell}, \dotsc} &= \Phi\Big([(M, \omega, F, (\alpha_1, \alpha_2), m_{\Zkzero})]\Big), \\
    \Pa{\dotsc, \mathsf{s}'_j, \dotsc, \mathsf{g}'_{j, \ell}, \dotsc} &= \Phi\Big([(M, \omega, F, (\alpha'_1, \alpha'_2), m'_{\Zkzero})]\Big).
  \end{align*}
  Let $\sigma, \sigma'$ be the desingularized period forms, and $\Pa{g_{j, \ell}}_{j, \ell \in \Z_k}, \Pa{g'_{j, \ell}}_{j, \ell \in \Z_k}$ respectively be the set of momentum transitions of $\Pa{(\psi_j, E_j)}_{j \in \Z_k}, \Pa{(\psi'_j, E'_j)}_{j \in \Z_k}$.
  \bigskip

  \emph{Case 1:}
  If $(\alpha'_1, \alpha'_2) = (-\alpha_1, \alpha_2)$ and $m'_{\Zkzero} = m_{\Zkzero}$, then $m'_j = m_j$ for $j \in \Z_k$.
  We have $G_X^* \kappa = \kappa + \pi \der c_1$ (recall $G_X$ defined in \cref{eq:local-model-diffeomorphism}).
  Since $\Pa{(\psi'_j, G_X \circ E_j)}_{j \in \Z_k}$ is compatible with $(\alpha'_1, \alpha'_2)$, by \cref{lem:invariant-uniqueness}, it is sufficient to assume that $E'_j = G_X \circ E_j$.
  Then
  \begin{equation*} \begin{split}
    \sigma' &= 2\pi \alpha'_2 - \sum_{j \in \Z_k} (E'_j)^* \kappa = 2\pi \alpha_2 - \sum_{j \in \Z_k} E_j^* (\kappa + \pi \der c_1) \\
    &= \sigma - \pi \sum_{j \in \Z_k} E_j^* \der c_1 = \sigma - k \pi \der c_1,
  \end{split} \end{equation*}
  and $g'_{j, \ell}(c) = \proj_2 \circ G_X \circ G_{j, \ell} \circ G_X^{-1}(c) = g_{j, \ell}(-\cj{c})$.
  In this case,
  \begin{align*}
    \mathsf{S}' &= \mathsf{S} - k \pi[c_1], \\
    \mathsf{s}'_j(X, Y) &= \mathsf{s}_j(-X, Y) + k \pi X, \\
    \mathsf{g}'_{j, \ell}(X, Y) &= \mathsf{g}_{j, \ell}(-X, Y).
  \end{align*}
  \bigskip

  \emph{Case 2:}
  If $(\alpha'_1, \alpha'_2) = (\alpha_1, -\alpha_2)$ and $m'_{\Zkzero} = m_{\Zkzero}$, then $m'_j = m_{-j}$ for $j \in \Z_k$ (since the direction of $\mathcal{X}_{E_{\Zkzero}^* \der c_2}$ is reversed).
  We have $G_Y^* \kappa = -\kappa$ (recall $G_Y$ defined in \cref{eq:local-model-diffeomorphism}).
  By \cref{lem:invariant-uniqueness}, it is sufficient to assume that $E'_j = G_Y \circ E_{-j}$.
  Then 
  \begin{equation*} \begin{split}
    \sigma' = 2\pi \alpha'_2 - \sum_{j \in \Z_k} (E'_j)^* \kappa = -2\pi \alpha_2 + \sum_{j \in \Z_k} E_{-j}^* \kappa = -\sigma,
  \end{split} \end{equation*}
  and $g'_{j, \ell}(c) = \proj_2 \circ G_Y \circ G_{-j, -\ell} \circ G_Y^{-1}(c) = -g_{-j, -\ell}(\cj{c})$.
  In this case,
  \begin{align*}
    \mathsf{S}' &= -\mathsf{S}, \\
    \mathsf{s}'_j(X, Y) &= -\mathsf{s}_{-j}(X, -Y), \\
    \mathsf{g}'_{j, \ell}(X, Y) &= -\mathsf{g}_{-j, -\ell}(X, -Y).
  \end{align*}
  \bigskip

  \emph{Case 3:}
  If $(\alpha'_1, \alpha'_2) = (\alpha_1, \alpha_2)$ and $m'_{\Zkzero} = m_p$ for some $p \in \Z_k$, then $m'_j = m_{j+p}$ for $j \in \Z_k$.
  By \cref{lem:invariant-uniqueness}, it is sufficient to assume that $E'_j = E_{j+p}$.
  Then $\sigma' = \sigma$ and $g'_{j, \ell} = g_{j+p, \ell+p}$.
  Hence $\mathsf{S}' = \mathsf{S}, \mathsf{s}'_j = \mathsf{s}_{j+p}, \mathsf{g}'_{j, \ell} = \mathsf{g}_{j+p, \ell+p}$.
\end{proof}

\subsection{Corollary of the main theorem} \label{ssec:corollary-main-theorem}

The bijections $\gamma_X$, $\gamma_Y$, and $\theta_p$ are subject to the relations
\begin{align*}
  \gamma_X^2 = \gamma_Y^2 = \theta_p^p = (\gamma_Y \circ \theta_p)^2 = \identity.
\end{align*}
So they generate a $(\Z_2 \times D_k)$-action on $\Iffobk{k}$; $\gamma_X$ generates $\Z_2$, $\gamma_Y$ and $\theta_p$ generate $D_k$.
We have

\begin{corollary} [Corollary of \cref{thm:invariant-bijection}] \label{cor:invariant-bijection-quotient}
  There is a bijection
  \begin{align*}
    \wt{\Phi} \colon \Mffk{k} &\to \Iffk{k} \defeq \Iffobk{k} / (\Z_2 \times D_k) \\
    [(M, \omega, F)] &\mapsto \left[\Phi\Pa{[(M, \omega, F, (\alpha_1, \alpha_2), m_{\Zkzero})]}\right]
  \end{align*}
  where $(\alpha_1, \alpha_2) \in \Dir(M, \omega, F)$, $m_{\Zkzero}$ is a singular point of $F$, and the $(\Z_2 \times D_k)$-action is generated by $\gamma_X$, $\gamma_Y$, and $\theta_p$.
\end{corollary}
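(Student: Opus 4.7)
The plan is to realize $\wt\Phi$ as the descent of $\Phi$ to quotients of a $(\Z_2\times D_k)$-action acting compatibly on both sides. First I would define the $(\Z_2\times D_k)$-action on $\Mffobk{k}$ by letting the generators act on representatives through the changes of orientation and basepoint already used in \cref{lem:invariant-uniqueness-group-action}: $\gamma_X$ sends $(\alpha_1,\alpha_2)$ to $(-\alpha_1,\alpha_2)$, $\gamma_Y$ sends it to $(\alpha_1,-\alpha_2)$, and $\theta_p$ changes the basepoint from $m_{\Zkzero}$ to $m_p$. These satisfy the defining relations of $\Z_2\times D_k$ because $\gamma_X$ commutes with the others (it affects only $\alpha_1$), while $\gamma_Y$ and the $\theta_p$ generate $D_k$: indeed $\gamma_Y$ reverses the cyclic labeling of $\Crit(\mathcal F)$ by \cref{lem:existence-compatible-singularity-atlas}, so $\gamma_Y\theta_p\gamma_Y=\theta_{-p}$.

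The next step is to identify $\Mffobk{k}/(\Z_2\times D_k)$ with $\Mffk{k}$ through the forgetful map $\pi$ that drops orientation and basepoint. Surjectivity of $\pi$ is immediate since any $(M,\omega,F)\in\IntSysFFk{k}$ admits some orientation (the set $\Ori(M,\omega,F)$ has four elements) and some basepoint in $\Crit(\mathcal F)$. For the fiber description, suppose two basepointed oriented systems map to the same class under $\pi$ via a germ of isomorphisms $\varphi$ that need not respect the decorations; then pulling the second orientation and basepoint back via $\varphi$ yields an equivalent element of $\Mffobk{k}$ sharing the same underlying system with the first. Since all $4k$ decorations on a fixed underlying system are exchanged transitively by $\Z_2\times D_k$ (enumerated via the four sign choices for $(\alpha_1,\alpha_2)$ and $k$ choices of basepoint), the two classes lie in the same orbit. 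The converse inclusion is immediate from the definition of the action.

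The corollary now follows by invoking \cref{eq:Phi-invariance} from \cref{lem:invariant-uniqueness-group-action}, which states precisely that $\Phi$ intertwines the $(\Z_2\times D_k)$-action on $\Mffobk{k}$ with the action on $\Iffobk{k}$ generated by $\gamma_X$, $\gamma_Y$, and $\theta_p$. Since $\Phi$ is a bijection by \cref{thm:invariant-bijection}, equivariance forces the induced map on quotients to be a bijection, and this induced map is by construction $\wt\Phi\colon\Mffk{k}\to\Iffk{k}$.

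I do not expect a substantial obstacle: the geometric content has been fully absorbed into \cref{lem:invariant-uniqueness-group-action}, and what remains is the formal observation that a bijection equivariant under a group action descends to a bijection on quotients. The only point requiring a little care is the fiber description for $\pi$, which is essentially the combinatorial statement that the $4k$ possible decorations on a fixed underlying system form a single $(\Z_2\times D_k)$-torsor; this is a direct consequence of \cref{def:orientation} and the definition of $\Mffobk{k}$.
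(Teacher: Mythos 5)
Your proposal is correct and follows essentially the same route as the paper: the paper states this corollary as an immediate consequence of the equivariance relations \eqref{eq:Phi-invariance} established in \cref{lem:invariant-uniqueness-group-action}, and your write-up simply makes explicit the standard descent argument (the $4k$ decorations on a fixed underlying system form a single $(\Z_2\times D_k)$-orbit, so the forgetful map identifies $\Mffobk{k}/(\Z_2\times D_k)$ with $\Mffk{k}$, and the equivariant bijection $\Phi$ descends to a bijection on quotients). Your observation that $\gamma_Y\theta_p\gamma_Y=\theta_{-p}$ because reversing the $H$-orientation reverses the cyclic labeling is exactly the point that makes the group dihedral rather than abelian, consistent with the relations recorded in the paper.
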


\begin{remark} \label{rem:four-fold-action-by-direction}
  As pointed out in~\cite{MR3765971} the Taylor series invariant in the case that the singular fiber contains exactly one critical point of focus-focus is defined up to a $(\Z_2 \times \Z_2)$-action, which accounts for the choices of Eliasson local charts in its construction.
  It becomes unique in the presence of a global $\mathbb{S}^1$-action (i.e., semitoric systems) provided one assumes everywhere that the Eliasson local charts preserve the $\mathbb{S}^1$-action and the $\mathbb{R}^2$-direction.
  In \cref{cor:invariant-bijection-quotient}, we have the $(\Z_2 \times D_k)$-action instead.
  When $k = 1$, $(\Z_2 \times D_k) \simeq (\Z_2 \times \Z_2)$.
\end{remark}

\begin{remark} \label{rem:compare-Izosimov}
  We can recover the smooth invariant in~\cite[Theorem 3.7]{MR4057723} from our symplectic invariant $(\Z_2 \times D_k) \cdot \Pa{\Pa{\mathsf{s}_j}_{j \in \Z_k}, \Pa{\mathsf{g}_{j, \ell}}_{j, \ell \in \Z_k}}$.
  To any $\mathsf{w} \in \mathsf{R}_+$ we assign a complex formal series $\mathsf{w}_\C(Z, \overline{Z}) = X + \imag \mathsf{w}(X, Y)$ by setting $Z = X + \imag Y$ and then let 
  \begin{equation*}
    \mathsf{LR}_+ = \Set{\mathsf{w} \in \mathsf{R}_+ \mmid \mathsf{w}_\C(Z, \overline{Z}) = \sum_{r = 1}^\infty \sum_{s = 0}^\infty \mathsf{w}_\C^{(r, \overline{s})} Z^r \overline{Z}\vphantom{Z}^s, \mathsf{w}_\C^{(r, \overline{s})} \in \C}.
  \end{equation*}
  Then $\mathsf{LR}_+$ is the space of the Taylor series of the second components of orientation-preserving liftable diffeomorphisms characterized by~\cite[Theorem 3.4]{MR4057723} that preserves the first coordinate.
  Consider the action of $\mathsf{LR}_+^{k-1}$ on $\mathsf{R}_+^{k^2}$ by
  \begin{equation*}
    \Pa{\mathsf{w}_j}_{j \in \Z_k} \cdot \Pa{\mathsf{g}_{j, \ell}}_{j, \ell \in \Z_k} = \Pa{\mathsf{w}_\ell \cdot \mathsf{g}_{j, \ell} \cdot \mathsf{w}_j^{-1}}_{j, \ell \in \Z_k}
  \end{equation*}
  for $\Pa{\mathsf{w}_j}_{j \in \Z_k \setminus \scriptstyle\Set{\Zkzero}} \in \mathsf{LR}_+^{k-1}$ and $\mathsf{w}_{\Zkzero}(X, Y) = Y$.
  The smooth invariant is equivalent to the orbit of
  \begin{align*}
    (\mathsf{LR}_+^{k-1} \rtimes (\Z_2 \times D_k)) \cdot \Pa{\mathsf{g}_{j, \ell}}_{j, \ell \in \Z_k}.
  \end{align*}
  The $\mathrm{C}^1$-invariant is equivalent to the tuple of numbers given by 
  \begin{equation*}
    \mu_j = \frac{(\mathsf{g}_{\Zkzero, j})_\C^{(0, \overline{1})}}{\overline{(\mathsf{g}_{\Zkzero, j})_\C}^{(1, \overline{0})}} = \frac{1 - \displaystyle\Pa{\mathsf{g}_{\Zkzero, j}^{(1, 0)}}^2 - \displaystyle\Pa{\mathsf{g}_{\Zkzero, j}^{(0, 1)}}^2 + 2\imag \mathsf{g}_{\Zkzero, j}^{(1, 0)}}{\displaystyle\Pa{\mathsf{g}_{\Zkzero, j}^{(1, 0)}}^2 + \displaystyle\Pa{1 + \mathsf{g}_{\Zkzero, j}^{(0, 1)}}^2}
  \end{equation*}
  for $j \in \Z_k \setminus \Set{\Zkzero}$, up to simultaneously multiplying a complex number of unit norm or taking complex conjugations.
  Note that there are convention changes to the definitions so we said ``is equivalent to'' instead of ``is''.
  Note also that the realization theorem~\cite[Theorem 3.7]{MR4057723} ensured that every smooth focus-focus singularity is diffeomorphic to a symplectic one, so we have quotient maps from symplectic invariants to smooth and $\mathrm{C}^1$-invariants.
\end{remark}

\section{Proof of uniqueness} \label{sec:invariant-injectivity}

The goal of this section is to show \cref{lem:invariant-injectivity}, the injectivity claim of \cref{thm:invariant-bijection}.
Let $[(M, \omega, F, (\alpha_1, \alpha_2), m_{\Zkzero})], [(M', \omega', F', (\alpha'_1, \alpha'_2), m'_{\Zkzero})] \in \Mffobk{k}$ such that
\begin{equation*}
  \Phi\Big([(M, \omega, F, (\alpha_1, \alpha_2), m_{\Zkzero})]\Big) = \Phi\Big([(M', \omega', F', (\alpha'_1, \alpha'_2), m'_{\Zkzero})]\Big).
\end{equation*}
Let $B = F(M)$, $B' = F'(M')$.
Let $\mathcal{F} = F^{-1}(0)$ and $\mathcal{F}' = (F')^{-1}(0)$ be the singular fibers.
Let $\Pa{m_j}_{j \in \Z_k} = \Crit(\mathcal{F})$ and $\Pa{m'_j}_{j \in \Z_k} = \Crit(\mathcal{F}')$.
We aim to show that, there are $U \in \mathcal{N}(B, 0)$ and $U' \in \mathcal{N}(B', 0)$, a symplectomorphism $\varphi \colon (F^{-1}(U), \omega, \mathcal{F}) \to ((F')^{-1}(U'), \omega', \mathcal{F}')$ lifting a diffeomorphism $G \colon U \to U'$ such that $(G^* \alpha'_1, G^* \alpha'_2) = (\alpha_1, \alpha_2)$ and $\varphi(m_j) = m'_j$.

The following lemma is analogous to V{\~u} Ng{\d{o}}c {\cite[Lemma 5.1]{MR1941440}}.

\begin{lemma} \label{lem:change-A2}
  Suppose $\beta, \beta' \in (\Omega^1 / 2\pi \Period^{(\R^4, \omega_0, q)})(U \cap \R^2_\mathrm{r})$ for some $U \in \mathcal{N}(\R^2, 0)$ such that $\beta$ is of the form $2\pi \beta = \tau + \sum_{j \in \Z_k} E_j^* \kappa$ for some $\tau \in \Omega^1(U)$ and diffeomorphisms $E_j \colon U \to E_j(U) \subseteq \R^2$, and $\beta' - \beta$ is a closed and flat.
  Then there is a diffeomorphism $G \colon \R^2 \to \R^2$ isotopic to the identity and a $U' \in \mathcal{N}(\R^2, 0)$ such that $G(c_1, c_2) = (c_1, c_2 + \inftes)$ and $G^* \beta' = \beta$ on $U'$.
\end{lemma}

\begin{proof}
  Let $\rho = \beta' - \beta \in Z^1(\R^2)$.
  Throughout the proof $t$ is a variable in $[0, 1]$.
  Let $\beta_t = \beta + t \rho$, and let $R \in \inftes$ be such that $\der R = \rho$.
  Define a family of functions $h_t \colon U \to \R$ as
  \begin{equation*}
    h_t = \Braket{\beta_t, \frac{\partial}{\partial c_2}} = \Braket{\frac{\tau}{2\pi} + t \rho, \frac{\partial}{\partial c_2}} - \sum_{j \in \Z_k} \frac{\partial (\proj_2 \circ E_j)}{\partial c_2} \frac{\ln \abs{E_j}}{2\pi}.
  \end{equation*}
  Since $\frac{\partial (\proj_2 \circ E_j)}{\partial c_2}(0) > 0$ for any $j \in \Z_k$, we have $h_t(c) \to \infty$ as $c \to 0$.
  Note that for any multi-index $j$ the partial derivative $\partial^j h_t$ is a polynomial of $\abs{E_j}^{-1}$ and $\ln \abs{E_j}$, $j \in \Z_k$ with coefficients as smooth functions, divided by the $\abs{j}$-th power of $h_t$.
  Thus $1/h_t$ satisfies \cref{eq:tempered} (in place of $h$).
  Since $R$ is flat, by \cref{lem:flat-times-tempered} and using a bump function, for any $U'' \in \mathcal{N}(\R^2, 0)$ with $\overline{U''} \subset U$, the family of functions $f_t \defeq -\Res{R}_{U''}/h_t$ on $U'' \cap \R^2_{\mathrm{r}}$ has a smooth extension $\tilde{f}_t \colon \R^2 \to \R$ that is flat at the origin and has compact support.
  Take $G = G_1$ as $G_t$ to be the flow of $Y_t = \tilde{f}_t \frac{\partial}{\partial c_2}$ on $\R^2$.
  Then
  \begin{equation*}
    \frac{\der}{\der t} (G_t^* \beta_t) = G_t^* (\der \langle\beta_t, Y_t\rangle + \rho) = G_t^* (\der(\tilde{f}_t\langle\beta_t, \textstyle\frac{\partial}{\partial c_2}\rangle) + \rho) = G_t^* (-\der R + \rho) = 0
  \end{equation*}
  on $G_t^{-1}(U'')$.
  Hence $G_t^* \beta_t = \beta$ on $U'$, the intersection of $G_t^{-1}(U'')$ for $t \in [0, 1]$ and by the construction $G_t(c_1, c_2) = (c_1, c_2 + \inftes)$.
\end{proof}

\begin{lemma} \label{lem:invariant-injectivity}
  The map $\Phi \colon \Mffobk{k} \to \Iffobk{k}$ is injective.
\end{lemma}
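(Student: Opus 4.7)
The plan is to take two basepointed oriented systems $(M, \omega, F, (\alpha_1, \alpha_2), m_{\Zkzero})$ and $(M', \omega', F', (\alpha_1', \alpha_2'), m'_{\Zkzero})$ whose images under $\Phi$ coincide, and to produce an orientation- and basepoint-preserving germ of isomorphisms $\varphi \colon (M, \omega, \mathcal{F}) \to (M', \omega', \mathcal{F}')$ lifting some germ of diffeomorphisms $G \colon (B, 0) \to (B', 0)$. Fix compatible singularity atlases $\Pa{(\psi_j, E_j)}_{j \in \Z_k}$ and $\Pa{(\psi'_j, E'_j)}_{j \in \Z_k}$; by hypothesis the momentum transitions satisfy $\mathsf{g}_{j,\ell} = \mathsf{g}'_{j,\ell}$ and the desingularized action series satisfy $\mathsf{s}_j = \mathsf{s}'_j$ for all $j, \ell \in \Z_k$.

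First I would declare $G \defeq (E'_{\Zkzero})^{-1} \circ E_{\Zkzero}$ as a candidate base diffeomorphism and define $\varphi$ near $m_{\Zkzero}$ to be $(\psi'_{\Zkzero})^{-1} \circ \psi_{\Zkzero}$, which tautologically lifts $G$. For $j \neq \Zkzero$, the naive candidate $(\psi'_j)^{-1} \circ \psi_j$ lifts $(E'_j)^{-1} \circ E_j$ rather than $G$. However, the equality $\mathsf{g}_{\Zkzero, j} = \mathsf{g}'_{\Zkzero, j}$ forces $G$ and $(E'_j)^{-1} \circ E_j$ to differ by a germ of diffeomorphisms of $(\R^2, 0)$ of the form $(c_1, c_2) \mapsto (c_1, c_2 + \inftes)$. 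By \cref{lem:local-flat} (in its ``if'' direction), such a flat discrepancy is the base map of a symplectic automorphism of $(\R^4, \omega_0, 0)$, and composing $\psi'_j$ with this automorphism yields a modified Eliasson chart whose base map matches $G$. After this adjustment I may assume that, for every $j$, $(\psi'_j)^{-1} \circ \psi_j$ lifts the single germ $G$.

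Next I would extend the local comparison semiglobally. By \cref{lem:extend-local-semiglobal}, each $\psi_j$ and $\psi'_j$ extends to $\wt{\psi}_j \colon M_j \to \R^4$ and $\wt{\psi}'_j \colon M'_j \to \R^4$, and $\varphi_j \defeq (\wt{\psi}'_j)^{-1} \circ \wt{\psi}_j \colon M_j \to M'_j$ is a germ of isomorphisms lifting $G$. On each overlap $M_{j, j+\Zkone}$, which consists of regular points in its intersection with a punctured neighborhood of $\mathcal{F}$, both $\varphi_j$ and $\varphi_{j+\Zkone}$ are symplectomorphisms lifting $G$, so by \cref{lem:symplecto-endo} combined with the fiber-transitivity of $\Psi$ over $B_\mathrm{r}$, they satisfy $\varphi_{j+\Zkone} = \Psi_{\tau_j} \circ \varphi_j$ for a unique closed $\tau_j \in (Z^1 / 2\pi \Period)(B_\mathrm{r})$. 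Tracing around the cycle $\Z_k$ and comparing with the formula for $\sigma$ in \cref{lem:definition-sigma} and its proof, the obstruction cocycle $\sum_{j \in \Z_k} \tau_j$ represents $\sigma - G^* \sigma'$ in $(Z^1/ 2\pi \Period)(B_\mathrm{r})$.

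The main obstacle will be turning the equality $\mathsf{s}_j = \mathsf{s}'_j$ into an honest symplectic correction of the $\varphi_j$'s that extends smoothly across the singular fiber. Since $\mathsf{s}_j = \mathsf{s}'_j$, the desingularized actions $S$ and $G^* S'$ agree to all orders at $0$, whence $\sigma - G^* \sigma'$ is exact with flat primitive on $B_\mathrm{r}$. The plan is to redistribute this flat exact form as closed translation forms $\tau'_j \in (Z^1/2\pi \Period)(B_\mathrm{r})$ with each $\tau_j - \tau'_j$ flat, and then replace each $\varphi_j$ by $\Psi_{\tau'_j + \dotsb} \circ \varphi_j$. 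The crucial analytic input, modelled on \cref{lem:Psi-G-pullback-kappa,lem:varphi-G} and powered by \cref{lem:flat-times-tempered}, is that translation by a flat closed form extends smoothly and symplectically across the portion of the singular fiber contained in each $M_j$, because the logarithmic singularity of $\kappa$ is tamed by flatness. After this correction, the $\varphi_j$'s agree on overlaps and glue to the sought germ $\varphi$; orientation is preserved since $G$ pulls back period generators to period generators by construction, and the basepoint because $\varphi(m_{\Zkzero}) = m'_{\Zkzero}$ by construction.
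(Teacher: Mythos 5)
Your overall architecture --- match the Eliasson charts of the two systems using the equality of transition cocycles together with \cref{lem:local-flat}, extend each local comparison over $M_j$ via \cref{lem:extend-local-semiglobal}, and glue around the cycle while using $\mathsf{s}_j = \mathsf{s}'_j$ to control the holonomy --- agrees with the paper's up to the point where you identify the total obstruction $\sum_{j \in \Z_k}\tau_j$ with $\sigma - G^*\sigma'$, which after your chart adjustment equals $2\pi(\alpha_2 - G^*\alpha'_2)$: a closed $1$-form that the hypothesis $\mathsf{s}_j=\mathsf{s}'_j$ makes \emph{flat} but not \emph{zero}. The gap is in your final step. Replacing each $\varphi_j$ by $\Psi_{\rho_j}\circ\varphi_j$ changes the overlap discrepancy on $M_{j,j+\Zkone}$ by the coboundary $\rho_{j+\Zkone}-\rho_j$, and the sum of these around the cycle $\Z_k$ telescopes to zero; hence no redistribution of translation corrections, flat or otherwise, can change $\sum_j\tau_j$. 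If that flat form is nonzero, the corrected maps still disagree on one overlap by $\Psi_{\sigma-G^*\sigma'}\neq\identity$, and since the only orientation- and basepoint-preserving automorphisms of $(M'_j,\omega',F')$ over the identity on the base are translations by smooth closed forms, there is no remaining freedom in the fiber maps to absorb it.

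The missing ingredient is the paper's \cref{lem:change-A2} (V{\~u} Ng{\d{o}}c's Lemma 5.1): the residual flat discrepancy must be absorbed into the \emph{base} diffeomorphism, not into the fiber maps. One builds a germ of diffeomorphisms $E$ of $(B,0)$ as the time-one flow of a vector field $\tilde{f}_t\frac{\partial}{\partial c_1}$ whose coefficient is flat precisely because \cref{lem:flat-times-tempered} tames the $\ln\abs{E_j}$ singularities appearing in the denominator of $f_t$; after replacing $F'$ by $E^{-1}\circ F'$ one gets $G^*\alpha'_2=\alpha_2$ on the nose, the holonomy around the cycle vanishes exactly, and the chain $\wt{\varphi}_{\Zkzero},\dotsc,\wt{\varphi}_{\Zknegone}$ closes up on the last overlap $M_{\Zknegone,\Zkzero}$. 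In short, you have identified the correct analytic tools (flatness taming logarithmic singularities, \cref{lem:flat-times-tempered}) but applied them to the wrong object: they must be used to reparametrize the base \emph{before} the semiglobal extension, rather than to translate the fibers afterwards.
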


\begin{proof}
  \emph{Initialization:}
  Let $\Pa{(\psi_j \colon V_j \to \psi_j(V_j), E_j \colon U \to U_j)}_{j \in \Z_k}$ be a singularity atlas of $(M, \omega, F)$ compatible with $(\alpha_1, \alpha_2)$, and let $\Pa{(\psi'_j \colon V'_j \to \psi'_j(V'_j), E'_j \colon U' \to U'_j)}_{j \in \Z_k}$ be a singularity atlas of $(M', \omega', F')$ compatible with $(\alpha'_1, \alpha'_2)$.
  Suppose the two marked directed integrable systems share the same set of invariants $\Pa{\Pa{\mathsf{s}_j}_{j \in \Z_k}, \Pa{\mathsf{g}_{j, \ell}}_{j, \ell \in \Z_k}}$.
  For $j, \ell \in \Z_k$ recall that $G_{j, \ell} = E_\ell \circ E_j^{-1}$ and let $G'_{j, \ell} = E'_\ell \circ (E'_j)^{-1}$; then both 
  \begin{align*}
    ((E'_j)^{-1})^* \sigma' - (E_j^{-1})^* \sigma, \qquad
    \proj_2 \circ (G'_{j, \ell} - G_{j, \ell})
  \end{align*}
  are flat.
  We will construct a new system over $U$ isomorphic to $(M', \omega', F')$ over $U'$ with a singularity atlas of the system compatible with $(\alpha_1, \alpha_2)$.

  Calculating a smooth representative of the difference of two singular forms
  \begin{equation*} \begin{split}
    ((E'_{\Zkzero})^{-1})^* \alpha'_2 - (E_{\Zkzero}^{-1})^* \alpha_2 &= \frac{1}{2\pi} \Pa{((E'_{\Zkzero})^{-1})^*\sigma' - \sum_{j \in \Z_k} (G'_{\Zkzero, j})^* \kappa} - \Pa{(E_{\Zkzero}^{-1})^* \sigma - \sum_{j \in \Z_k} G_{\Zkzero, j}^* \kappa} \\
    &= \Pa{((E'_j)^{-1})^* \sigma' - (E_j^{-1})^* \sigma} - \sum_{j \in \Z_k} E_j^* \Pa{(G'_{\Zkzero, j})^* \kappa - G_{\Zkzero, j}^* \kappa} \\
    &= \inftes \der c_1 + \inftes \der c_2.
  \end{split} \end{equation*}
  We have used the fact that, by \cref{lem:kappa-lemma},
  \begin{equation*}
    (G'_{\Zkzero, j})^* \kappa - G_{\Zkzero, j}^* \kappa = G_{\Zkzero, j}^* \Pa{(G_{\Zkzero, j}^{-1} \circ G'_{\Zkzero, j})^* \kappa - \kappa} = \inftes \der c_1 + \inftes \der c_2.
  \end{equation*}
  By \cref{lem:change-A2} and shrinking the neighborhood $U$, and then $U_j = E_j(U)$ and $U'_j = E'_j(U)$ for $j \in \Z_k$ are shrunk accordingly, there is diffeomorphism $G_1$ of $\R^2$ such that
  \begin{align*}
    G_1^* ((E'_{\Zkzero})^{-1})^*  \alpha'_1 =(E_{\Zkzero}^{-1})^* \alpha_1, \quad G_1^* ((E'_{\Zkzero})^{-1})^*  \alpha'_2 = (E_{\Zkzero}^{-1})^* \alpha_2
  \end{align*}
  on $U_{\Zkzero}$.
  Let $G = (E'_{\Zkzero})^{-1} \circ G_1 \circ E_{\Zkzero} \colon U \to U' = (E'_{\Zkzero})^{-1}(U'_{\Zkzero})$ and then we have $(G^* \alpha'_1, G^* \alpha'_2) = (\alpha_1, \alpha_2)$.
  We calculate the Taylor series of the diffeomorphism $G'_j = E'_j \circ G \circ E_j^{-1} \colon U_j \to U'_j$,
  \begin{equation*} \begin{split}
    \Tl_0[G'_j] &\defeq (\Tl_0[\proj_1 \circ G'_j], \Tl_0[\proj_1 \circ G'_j]) \\
    &= \Tl_0[E'_j \circ G \circ E_j^{-1}] \\
    &= \Tl_0[E'_j \circ (E'_{\Zkzero})^{-1} \circ G_1 \circ E_{\Zkzero} \circ E_j^{-1}] \\
    &= \Tl_0[G'_{\Zkzero, j} \circ G_1 \circ G_{\Zkzero, j}^{-1}] \\
    &= \Tl_0[G_1] = (X, Y).
  \end{split} \end{equation*}
  Then $G'_j(c_1, c_2) = (c_1, c_2 + \inftes)$.
  Define symplectomorphisms
  \begin{align*}
    \wt{\varphi}_{G'_j} &\colon (q^{-1}(U_j), \omega_0) \to (q^{-1}(U'_j), \omega_0)
  \end{align*}
  as in \cref{lem:varphi_G}, lifting $G'_j$.
  Therefore, $(\wt{\varphi}_{G'_j}^{-1} \circ \psi'_j, E_j)$ is an Eliasson local chart at $m'_j$ for $j \in \Z_k$ and $\Pa{(\wt{\varphi}_{G'_j}^{-1} \circ \psi'_j, E_j)}_{j \in \Z_k}$ is a singularity atlas of $(M', \omega', G^{-1} \circ F')$, both compatible with $(\alpha_1, \alpha_2)$.

  By replacing $(M', \omega', F', (\alpha'_1, \alpha'_2), m'_{\Zkzero})$ with $(M', \omega', G^{-1} \circ F', (\alpha_1, \alpha_2), m'_{\Zkzero})$ and $\Pa{(\psi'_j, E'_j)}_{j \in \Z_k}$ with $\Pa{(\wt{\varphi}_{G'_j}^{-1} \circ \psi'_j, E_j)}_{j \in \Z_k}$ if necessary, we assume later, without loss of generality, that $(\alpha'_1, \alpha'_2) = (\alpha_1, \alpha_2)$ and $E'_\ell \circ (E'_j)^{-1} = E_\ell \circ E_j^{-1}$ for $j, \ell \in \Z_k$.
  \bigskip

  \emph{Construction of the semiglobal isomorphism:}
  After the initialization we have marked directed systems $(M, \omega, F, (\alpha_1, \alpha_2), m_{\Zkzero})$ with a singularity atlas $\Pa{(\psi_j \colon V_j \to \psi_j(V_j), E_j \colon U \to U_j)}_{j \in \Z_k}$, and $(M', \omega', F', (\alpha_1, \alpha_2), m'_{\Zkzero})$ with a singularity atlas $\Pa{(\psi'_j \colon V'_j \to \psi'_j(V'_j), E'_j \colon U' \to U'_j)}_{j \in \Z_k}$ such that $E'_\ell \circ (E'_j)^{-1} = E_\ell \circ E_j^{-1}$ for $j, \ell \in \Z_k$.
  We aim to find a symplectomorphism $\varphi \colon (W, \omega) \to (W', \omega')$ with $F' \circ \varphi = F$, and $\varphi(m_j) = m'_j$ where $W = F^{-1}(U) \in \mathcal{N}_F(M, \mathcal{F})$ and $W' = (F')^{-1}(U') \in \mathcal{N}_{F'}(M', \mathcal{F}')$.

  We construct $\varphi$ by induction as follows.
  Define a symplectomorphism $\varphi_{\Zkzero} = (\psi'_{\Zkzero})^{-1} \circ \psi_{\Zkzero} \colon (V_{\Zkzero}, \omega) \to (V'_{\Zkzero}, \omega')$.
  Recall the definition of $W_j$ in \cref{eq:def-Mj}; $M'_j \subset M'$ is defined analogously.
  Analogous to the proof of \cref{lem:definition-sigma}, we can extend $\varphi_{\Zkzero}$ to a symplectomorphism $\wt{\varphi}_{\Zkzero} \colon (W_{\Zkzero}, \omega) \to (W'_{\Zkzero}, \omega')$.

  For $j \in \Z_k \setminus \Set{\Zknegone}$, suppose we have defined the symplectomorphism $\wt{\varphi}_j \colon (W_j, \omega) \to (W'_j, \omega')$, and we want to define $\wt{\varphi}_{j+\Zkone} \colon (W_{j+\Zkone}, \omega) \to (W'_{j+\Zkone}, \omega')$.
  Let $\mu_{j+\Zkone}$ be a symplectomorphism determined by the following commutative diagram:
  \begin{equation*}
    \xymatrix{
      (V_j, \omega) \ar[r]^{\varphi_j} \ar[d]_{\psi_{j+\Zkone}^{-1} \circ \psi_j} & (V'_j, \omega') \ar[d]^{(\psi'_{j+\Zkone})^{-1} \circ \psi'_j} \\
      (V_{j+\Zkone}, \omega) \ar[r]_{\mu_{j+\Zkone}} & (V'_{j+\Zkone}, \omega')
    }.
  \end{equation*}

  By \cref{lem:symplecto-extension}, we extend $\mu_{j+\Zkone}$ to $\wt{\mu}_{j+\Zkone} \colon (W_{j+\Zkone}, \omega) \to (W'_{j+\Zkone}, \omega')$.
  Recall $W_{j, j+\Zkone} = W_j \cap W_{j+\Zkone}$ and let $M'_{j, j+\Zkone} = M'_j \cap M'_{j+\Zkone}$.
  Define $\mu_{j, j+\Zkone}$ such that the diagram
  \begin{equation*}
    \xymatrix{
      (W_{j, j+\Zkone}, \omega) \ar[r]^{\wt{\varphi}_j} \ar[dr]_{\wt{\mu}_{j+\Zkone}} & (W'_{j, j+\Zkone}, \omega') \ar[d]^{\mu_{j, j+\Zkone}} \\
      & (W'_{j, j+\Zkone}, \omega')
    }
  \end{equation*}
  commutes.

  Note that $\wt{\mu}_{j+\Zkone}(m_{j+\Zkone}) = m'_{j+\Zkone}$ and $\wt{\varphi}_j(x) \to m'_{j+\Zkone}$ as $x \to m_{j+\Zkone}$ in $M$, so we have $\mu_{j, j+\Zkone}(m'_{j+\Zkone}) = m'_{j+\Zkone}$.
  As a fiberwise translation by $\tau'_{j, j+\Zkone} \in \Omega^1(U')$, in the sense that $\mu_{j, j+\Zkone} = \Res{\Psi_{\tau'_{j, j+\Zkone}}}_{W'_{j, j+\Zkone}}$, we could extend $\mu_{j, j+\Zkone}$ to a symplectomorphism $\wt{\mu}_{j, j+\Zkone} = \Psi_{\tau_{j, j+\Zkone}}$ of $(W', \omega')$.
  Now let
  \begin{equation*} \begin{split}
    \wt{\varphi}_{j+\Zkone} = \wt{\mu}_{j+\Zkone}^{-1} \circ \Res{\wt{\mu}_{j, j+\Zkone}}_{W_{j+\Zkone}} \colon (W_{j+\Zkone}, \omega) &\to (W'_{j+\Zkone}, \omega'),
  \end{split} \end{equation*}
  and then $\wt{\varphi}_{j+\Zkone} = \wt{\varphi}_j$ in their common domain $W_{j, j+\Zkone}$.

  For $\varphi_{\Zknegone}$ and $\varphi_{\Zkzero}$, they coincide on regular values of $F$ near $\mathcal{F}$, so by continuity, they must coincide on their common domain $W_{\Zknegone, \Zkzero}$.
  Hence, we can glue $\wt{\varphi}_j$, $j \in \Z_k$ to get a symplectomorphism $\varphi \colon (W, \omega) \to (W', \omega')$ with the commuting diagram:
  \begin{equation*}
    \xymatrix{
      (W, \omega) \ar[rr]^{\varphi} \ar[dr]_{F} & & (W', \omega') \ar[dl]^{F'} \\
      & U &
     }.
  \end{equation*}
\end{proof}

\section{Proof of existence} \label{sec:invariant-surjectivity}

The goal of this section is to show \cref{lem:invariant-surjectivity}, the surjectivity claim of \cref{thm:invariant-bijection}.
Let
\begin{equation*}
  \Pa{\dotsc, \mathsf{v}_j, \dotsc, \mathsf{w}_{j, \ell}, \dotsc} \in \Iffobk{k}.
\end{equation*}
We aim to show that there is $(M, \omega, F, (\alpha_1, \alpha_2), m_{\Zkzero})$ such that
\begin{equation} \label{eq:invariants-coincide}
  \Phi\Big([(M, \omega, F, (\alpha_1, \alpha_2), m_{\Zkzero})]\Big) = \Pa{\dotsc, \mathsf{v}_j, \dotsc, \mathsf{w}_{j, \ell}, \dotsc}.
\end{equation}

The local structures of the integrable system $(M, \omega, F)$ near the singular points $m_j$ are isomorphic to the local normal form in \cref{ssec:normal-form}.
The isomorphism can be extended to a complete neighborhood $W_j$ of $m_j$.
We use the symplectic gluing technique similar to~\cite[Section~3]{MR2784664} to construct $(M, \omega, F)$.

By Borel's lemma, there is an open neighborhood $U$ of the origin in $\R^2$ and smooth maps $s_{\Zkzero} \colon U \to \R$ and $G_{\Zkzero, j} \colon U \to \R^2$ such that $\Tl_0[s_{\Zkzero}] = \mathsf{v}_{\Zkzero}$ and $\Tl_0[G_{\Zkzero, j}] = (X, \mathsf{w}_{\Zkzero, j})$.
We choose $G_{\Zkzero, \Zkzero} = \identity$.
Let $\mathsf{w}_{\Zkzero, j}^Y > 0$ be the $Y$-coefficient of $\mathsf{w}_{\Zkzero, j}$, and $\mathsf{v}_{\Zkzero}^Y \in \R$ the $Y$-coefficient of $\mathsf{v}_{\Zkzero}$.
For $\delta > 0$ sufficiently small, there is an open neighborhood $U_{\Zkzero} \subset U$ of $0$, such that for any $j \in \Z_k$, if we let $U_j = G_{\Zkzero, j}(U_{\Zkzero})$, then for any $c \in U_j$ we have
\begin{align} \label{eq:estimate-delta}
  \abs{c} < \delta < 1, \qquad \frac{\partial (\proj_2 \circ G_{\Zkzero, j})}{\partial c_2}(c) > \mathsf{w}_{\Zkzero, j}^Y - \delta > 0, \qquad \frac{\partial s_{\Zkzero}}{\partial c_2}(c) > \mathsf{v}_{\Zkzero}^Y - \delta > 0.
\end{align}
For $j \in \Z_k$, let $W_j = q^{-1}(U_j)$, $W_{j, \mathrm{nu}} = W_j \setminus D^0_{\mathrm{u}}$, $W_{j, \mathrm{ns}} = W_j \setminus D^0_{\mathrm{s}}$, $W_{j, \mathrm{r}} = W_j \cap \R^4_{\mathrm{r}}$, $U_{j, \mathrm{r}} = U_j \cap \R^2_{\mathrm{r}}$.
Let $kW = \coprod_{j \in \Z_k} W_j$ and $kW_{\mathrm{r}} = \coprod_{j \in \Z_k} W_{j, \mathrm{r}}$.
Let $kD = \coprod_{j \in \Z_k} D_j \subset kW$ where
\begin{align*}
  D_{\Zkzero} &= \Set{(z, \zeta) \in W_{\Zkzero} \mmid \abs{z} \leq 1, \abs{\zeta} \leq \exp\textstyle\frac{\partial s_{\Zkzero}}{\partial c_2}(q(z, \zeta))}, \\
  D_j &= \Set{(z, \zeta) \in W_j \mmid \abs{z} \leq 1, \abs{\zeta} \leq 1}, \qquad \text{for $j \in \Z_k \setminus \Set{\Zkzero}$}.
\end{align*}
Note that these spaces depend on $\delta$.

Define, for $j, \ell \in \Z_k$, diffeomorphisms $G_{j, \ell} = G_{\Zkzero, \ell} \circ G_{\Zkzero, j}^{-1} \colon U_j \to U_\ell$ and by \cref{lem:symplecto-auto,lem:symplecto-endo} symplectomorphisms $\varphi_{j, j+\Zkone} \colon W_{j, \mathrm{nu}} \to W_{j+\Zkone, \mathrm{ns}}$ as
\begin{align} \label{eq:def-varphi-j-jplusone}
  \varphi_{j, j+\Zkone} = \begin{cases}
    \varphi_{G_{j, j+\Zkone}} \circ \Psi_{-\kappa}, & j \neq \Zknegone; \\
    \Psi_{-\der s_{\Zkzero}} \circ \varphi_{G_{\Zknegone, \Zkzero}} \circ \Psi_{-\kappa}, & j = \Zknegone.
  \end{cases}
\end{align}
Let $\mathcal{G}$ be the groupoid generated by the restrictions of $\varphi_{j, j+\Zkone}$ for $j \in \Z_k$ onto open subsets.
Recall $\Gamma_k$ is the cycle graph with $k$ vertices.
Consider its fundamental groupoid $\Pi(\Gamma_k)$ whose elements are of the form $[j, \ell]_p$, where $j, \ell \in \Z_k, p \in \Z$ and $j + \overline{p} = \ell$,
The multiplication is given by concatenation $[\ell, j']_{p'} \cdot [j, \ell]_p = [j, j']_{p+p'}$.
Any element $[j, \ell]_p$ of $\Pi(\Gamma_k)$ corresponds to an element of $\mathcal{G}$ :
\begin{align*}
  \begin{cases}
    \varphi_{[j, j]_{\Zkzero}} = \identity \colon W_j \to W_j, & p = 0; \\
    \varphi_{[j, j+\Zkone]_1} = \varphi_{j, j+\Zkone} \colon W_{j, \mathrm{nu}} \to W_{j+\Zkone, \mathrm{ns}}, & p = 1; \\
    \varphi_{[j, j-\Zkone]_{-1}} = \varphi_{j-\Zkone, j}^{-1} \colon W_{j, \mathrm{ns}} \to W_{j-\Zkone, \mathrm{nu}}, & p = -1; \\
    \varphi_{[j, j+\overline{p}]_p} = \varphi_{j+\overline{p-1}, j+\overline{p}} \circ \dotsb \circ \varphi_{j+\Zkone, j+\overline{2}} \circ \varphi_{j, j+\Zkone} \colon W_{j, \mathrm{r}} \to W_{j+\overline{p}, \mathrm{r}}, & p \geq 2; \\
    \varphi_{[j, j+\overline{p}]_p} = \varphi_{j+\overline{p}, j+\overline{p+1}}^{-1} \circ \dotsb \circ \varphi_{j-\overline{2}, j-\Zkone}^{-1} \circ \varphi_{j-\Zkone, j}^{-1} \colon W_{j, \mathrm{r}} \to W_{j+\overline{p}, \mathrm{r}}, & p \leq -2.
  \end{cases}
\end{align*}
Actually, $\mathcal{G}$ consists of restrictions of $\varphi_{[j, \ell]_p}$ for all $[j, \ell]_p \in \Pi(\Gamma_k)$ restricted to open subsets, and $\mathcal{G}$ is a groupoid of symplectomorphisms.
Note that for any $[j, \ell]_p \in \Pi(\Gamma_k)$, we have
\begin{equation*}
  q \circ \varphi_{[j, \ell]_p} = G_{j, \ell} \circ q.
\end{equation*}
Define a smooth function
\begin{align*}
  f_L \colon kW_\mathrm{r} &\to \R, \\
  W_{j, \mathrm{r}} \ni (z, \zeta) &\mapsto \frac{\partial g_{\Zkzero, j}}{\partial c_2}(G_{\Zkzero, j}^{-1}(q(z, \zeta))) \ln \abs{z}
\end{align*}
for $j \in \Z_k$.

\begin{lemma} \label{lem:Psi-tau-discrete}
  For any $j \in \Z_k$ and $(z, \zeta) \in W_j \subset kW$ there is a $p \in \Z$ such that $\varphi_{[j, j+\overline{p}]_p}(z, \zeta) \in D_{j+\overline{p}} \subset kD$.
  For any $j \in \Z_k$, we have $f_L \circ \varphi_{[j, j+\overline{p}]_p} - f_L \to \infty$ uniformly as $p \to \infty$ and $f_L \circ \varphi_{[j, j+\overline{p}]_p} - f_L \to -\infty$ uniformly as $p \to -\infty$, both on $W_{j, \mathrm{r}}$.
\end{lemma}

\begin{proof}
  Define functions $L_j \colon U_{\Zkzero, \mathrm{r}} \to \R$, $j \in \Z_k$, as
  \begin{align*}
    L_{\Zkzero}(c) &= -\ln \abs c + \frac{\partial s_{\Zkzero}}{\partial c_2}(c) \geq (1 - \delta) \abs{\ln \delta} + (\mathsf{v}_{\Zkzero}^Y - \delta), \\
    L_j(c) &= -\frac{\partial g_{\Zkzero, j}}{\partial c_2}(c) \ln \abs{G_{\Zkzero, j}(c)} \geq (\mathsf{w}_{\Zkzero, j}^Y - \delta) \abs{\ln \delta}, \qquad \text{for $j \in \Z_k \setminus \Set{\Zkzero}$};
  \end{align*}
  the inequalities hold by \cref{eq:estimate-delta}.

  Recall the definition of the function $r$ in \cref{eq:def-r} and we have $f_L = \Pa{\frac{\partial g_{\Zkzero, j}}{\partial c_2} \circ G_{\Zkzero, j}^{-1} \circ q} \cdot r$ on $W_{j, \mathrm{r}}$.
  By \cref{eq:r-translation,eq:r-varphi_G,eq:Psi-kappa-ext,eq:def-varphi-j-jplusone} we have, for any $p \in \Z$,
  \begin{align*}
    r \circ \varphi_{[0, \overline{p}]_p} &= \frac{r \circ \varphi_{[0, \overline{p-1}]_{p-1}} - \ln \abs{G_{\Zkzero, \overline{p-1}}\circ q}}{\frac{\partial g_{\overline{p-1}, \overline{p}}}{\partial c_2} \circ q}, \qquad \text{for $\overline{p} \neq 0$}; \\ 
    r \circ \varphi_{[0, \overline{p}]_p} &= \frac{r - \ln \abs{G_{\Zkzero, \overline{-1}} \circ q}} {\frac{\partial g_{\Zknegone, \Zkzero}}{\partial c_2} \circ q} - \frac{\partial s_{\Zkzero}}{\partial c_2} \circ q, \qquad \text{for $\overline{p} = 0$}.
  \end{align*} 
  on $W_{\Zkzero, \mathrm{r}}$.
  Therefore, $f_L$ and $L_j$ are related by
  \begin{align*}
    &\phantom{={}} f_L \circ \varphi_{[0, \overline{p}]_p} - f_L \circ \varphi_{[0, \overline{p-1}]_{p-1}} \\
    &= \Pa{\frac{\partial g_{\Zkzero, \overline{p}}}{\partial c_2} \circ q} \cdot \Pa{r \circ \varphi_{[0, \overline{p}]_p}} - \Pa{\frac{\partial g_{\Zkzero, \overline{p-1}}}{\partial c_2} \circ q} \cdot \Pa{r \circ \varphi_{[0, \overline{p-1}]_{p-1}}} \\
    &= L_{\overline{p}} \circ q.
  \end{align*}
  on $W_{\Zkzero, \mathrm{r}}$, and then we have
  \begin{align*}
    f_L \circ \varphi_{[\Zkzero, \overline{p}]_p} &= \begin{cases}
      f_L + \sum_{s = 1}^{p} L_{\overline{s}} \circ q, & p \geq 0, \\
      f_L - \sum_{s = p+1}^{0} L_{\overline{s}} \circ q, & p < 0.
    \end{cases}
  \end{align*}
  Together with the fact that $L_j$, $j \in \Z_k$, are positive and bounded away from zero, we conculde that $f_L \circ \varphi_{[j, j+\overline{p}]_p} - f_L$ diverges to $\infty$ uniformly as $p \to \infty$ and diverges to $-\infty$ uniformly as $p \to -\infty$ for $j = \Zkzero$ and then for any $j \in \Z_k$.

  For any fixed $(z, \zeta) \in W_{\Zkzero}$, we aim to find a $p \in \Z$ such that $\varphi_{[\Zkzero, \overline{p}]_p}(z, \zeta) \in D_{\overline{p}}$.
  Let $c = q(z, \zeta)$.
  Suppose $(z, \zeta) \in W_{\Zkzero, \mathrm{r}}$.
  If $f_L(z, \zeta) \leq 0$, there is a $p \in \Z$, $p \geq 1$ such that
  \begin{align*}
    -\sum_{s = 1}^{p} L_{\overline{s}}(c) \leq f_L(z, \zeta)
     \leq -\sum_{s = 1}^{p-1} L_{\overline{s}}(c)
  \end{align*}
  so $-L_{\overline{p}}(c) \leq f_L \circ \varphi_{[\Zkzero, \overline{p}]_p}(z, \zeta) \leq 0$ and $\varphi_{[\Zkzero, \overline{p}]_p}(z, \zeta) \in D_{\overline{p}}$; if otherwise $f_L(z, \zeta) > 0$, there is a $p \in \Z$, $p \leq 0$ such that $\varphi_{[\Zkzero, \overline{p}]_p}(z, \zeta) \in D_{\overline{p}}$ by a similar argument.
  Suppose otherwise $(z, \zeta) \in W_{\Zkzero} \setminus W_{\Zkzero, \mathrm{r}}$.
  If $\zeta = 0$ and $\abs{z} \leq 1$, or $z = 0$ and $\abs{\zeta} \leq 1$, we already have $(z, \zeta) \in D_{\Zkzero}$.
  If $\zeta = 0$ and $\abs{z} > 1$, then $\varphi_{[\Zkzero, \Zknegone]_{-1}}(z, \zeta) = (0, \zeta') \in D_{\Zknegone}$ since $0 < \abs{\zeta'} < 1$.
  If $z = 0$ and $\abs{\zeta} > 1$, then $\varphi_{[\Zkzero, \Zkone]_1}(z, \zeta) = (z', 0) \in D_{\Zkone}$ since $0 < \abs{z'} < 1$.
  Analogously, for any $(z, \zeta) \in W_j$, $j \in \Z_k$, there is a $p \in \Z$ such that $\varphi_{[j, j+\overline{p}]_p}(z, \zeta)$ is in $D_{j+\overline{p}}$.
\end{proof}

We define an equivalence equation $\sim_{\mathcal{G}}$ on $kW$ as $x \sim_{\mathcal{G}} y$ if and only if there is a $\varphi \in \mathcal{G}$ such that $y = \varphi(x)$.
Let $M = kW / \sim_{\mathcal{G}}$ be the quotient space, $\lambda \colon kW \to M$, $\lambda_j \colon W_j \to M$, $j \in \Z_k$ be the quotient maps.
Let $\Delta_{\mathcal{G}} = \Set{(x, y) \in kW \times kW \mmid x \sim_{\mathcal{G}} y}$.

\begin{lemma} \label{lem:symplectic-manifold-structure}
  The topological space $M$ can be uniquely realized as a symplectic manifold with a symplectic structure $\omega$ and a smooth function $F \colon M \to \R^2$ such that for every $j \in \Z_k$ the map $\lambda_j \colon (W_j, \omega_0) \to (\lambda_j(W_j), \omega)$ is a symplectomorphism and $G_{\Zkzero, j} \circ F \circ \lambda_j = \Res{q}_{W_j}$.
\end{lemma}

\begin{proof}
  We want to prove that $M$ is a topological manifold with the quotient topology.

  \emph{The map $\lambda_j$ is open:}
  for any open set $V \subset W_j$, the preimage
  \begin{equation*}
    \lambda_j^{-1}(\lambda_j(V)) = V \cup \varphi_{[j, j+\Zkone]_1}(V \cap W_{j, \mathrm{nu}}) \cup \varphi_{[j, j-\Zkone]_{-1}}(V \cap W_{j, \mathrm{ns}}) \cup \bigcup_{p \in \Z, \abs{p} \geq 2} \varphi_{[j, j+\overline{p}]_p}(V \cap W_{j, \mathrm{r}})
  \end{equation*}
  is open, so $\lambda_j$ is an open map.

  \emph{The map $\lambda_j$ is locally injective:}
  we need to prove that, any $x \in W_j$ has a neighborhood $V$ in $W_j$ such that for any $p \in \Z \setminus \Set{0}$, as long as $x$ is in the domain, the map $\varphi_{[j, j]_{pk}}$ sends $x$ outside of $V$.
  If $k \geq 2$, then $x \in W_{j, \mathrm{r}}$.
  This is a consequence of \cref{lem:Psi-tau-discrete}.
  If $k = 1$ and $x \in W_{\Zkzero, \mathrm{s}} \setminus \Set{0}$, we have $\varphi_{[\Zkzero, \Zkzero]_1}(x) \in W_{\Zkzero, \mathrm{u}}$ away from $x$.
  The case $k = 1$ and $x \in W_{\Zkzero, \mathrm{u}} \setminus \Set{0}$ is analogous.

  \emph{The subset $\Delta_{\mathcal{G}}$ is closed in $kW \times kW$:}
  suppose there is a sequence of points $((x_i, y_i))_{i = 1}^\infty \subset \Delta_{\mathcal{G}}$ converging to $(x_\infty, y_\infty) \in kW \times kW$.
  Assume, without loss of generality, that $(x_\infty, y_\infty) \in W_{\Zkzero} \times W_j$ for some fixed $j \in \Z_k$.
  Since $W_{\Zkzero}, W_j$ are open in $kW$, we can assume $((x_i, y_i))_{i = 1}^\infty \subset W_{\Zkzero} \times W_j$.
  There is $[0, \overline{p_i}]_{p_i} \in \Pi(\Gamma_k)$ such that $y_i = \varphi_{[0, \overline{p_i}]_{p_i}}(x_i)$.
  If there is a subsequence $\Set{p_{i_m}}$ of $p_i$ with $p_{i_m} = p_0 \in \Z$, then $y_{i_m} = \varphi_{[0, \overline{p_0}]_{p_0}}(x_{i_m})$.
  In this case, $y_\infty = \varphi_{[0, \overline{p_0}]_{p_0}}(x_\infty)$, so $(x_\infty, y_\infty) \in \Delta_{\mathcal{G}}$.
  Otherwise, by descending to a subsequence we can assume $\abs{p_i} \to \infty$, so for $i$ large, $x_i \in W_{\Zkzero, \mathrm{r}}$ and $y_i \in W_{j, \mathrm{r}}$.
  By \cref{lem:Psi-tau-discrete}, we have $\abs{f_L(x_i) - f_L(y_i)} \to \infty$, which contradicts $(x_i, y_i) \to (x_\infty, y_\infty)$.

  Since $\lambda_j$, $j \in \Z_k$ are open and locally injective, $\lambda_j$, $j \in \Z_k$ are local homeomorphisms, and $M$ is locally Euclidean.
  Since $\lambda_j$, $j \in \Z_k$ are open and $\Delta_{\mathcal{G}} \subset kW \times kW$ is closed, $M$ is Hausdorff.
  Since $W_j$, $j \in \Z_k$ are second countable, $M = \bigcup_{j \in \Z_k} \lambda_j(W_j)$ is second countable.
  We conclude that $M$ is a topological manifold.

  Noting that the maps $\varphi_{j, \ell}$, $j, \ell \in \Z_k$ are symplectomorphisms satisfying $q \circ \varphi_{j, \ell} = G_{j, \ell} \circ q$, there is a unique symplectic structure $\omega$ on $M$, and a smooth function $F \colon M \to \R^2$ such that $\lambda_j^* \omega = \omega_0$ and $G_{\Zkzero, j} \circ F \circ \lambda_j = \Res{q}_{W_j}$.
\end{proof}

\begin{lemma} \label{lem:invariant-surjectivity}
  The map $\Phi \colon \Mffobk{k} \to \Iffobk{k}$ is surjective.
\end{lemma}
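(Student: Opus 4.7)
The plan is to reverse-engineer an integrable system from the prescribed formal data by gluing $k$ copies of the local normal form $(\R^4,\omega_0,q)$ along the pattern of singularities prescribed by the cycle graph $\Gamma_k$. The first step is to realize the formal series as honest smooth germs. By Borel's theorem, I will pick a smooth function $S \colon (B,0) \to \R$ with $S(0)=0$ and $\Tl_0[S]$ representing $\mathsf{s}_{\Zkzero}$, and germs of diffeomorphisms $G_j \colon (\R^2,0)\to(\R^2,0)$ of the form $G_j(c_1,c_2)=(c_1,g_j(c_1,c_2))$ with $\partial g_j/\partial c_2>0$, $G_{\Zkzero}=\identity$, and $\Tl_0[G_j]=(X,\mathsf{g}_{\Zkzero,j}(X,Y))$ for $j \in \Z_k$. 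The cocycle condition on $(\mathsf{g}_{j,\ell})$ ensures that the formal series of $G_\ell\circ G_j^{-1}$ equals $(X,\mathsf{g}_{j,\ell})$ as required.

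Next, I will assemble the model by gluing. Take $k$ saturated open neighborhoods $W_j$ of $\mathcal{F}_0$ in $(\R^4,\omega_0,q)$, indexed by $j\in\Z_k$. For each $j\in\Z_k$, define a symplectomorphism $\chi_j$ from the part of $W_j$ near $\R^4_{\mathrm{s}}\setminus\Set{0}$ (the ``exit'' of the flow of $X_{\der c_2}$) to the part of $W_{j+\Zkone}$ near $\R^4_{\mathrm{u}}\setminus\Set{0}$ (the ``entry''), as follows. For $j\in\Z_k\setminus\Set{\Zknegone}$ let
\begin{equation*}
  \chi_j = \wt{\varphi}_{G_{j+\Zkone}\circ G_j^{-1}}\circ \wt{\Psi}_\kappa,
\end{equation*}
using the extensions from \cref{lem:Psi-kappa} and \cref{lem:varphi-G}; for $j=\Zknegone$ set $\chi_{\Zknegone}=\wt{\varphi}_{G_{\Zkzero}\circ G_{\Zknegone}^{-1}}\circ \wt{\Psi}_\kappa\circ \Psi_{q^*\der S}$, inserting the smooth translation by $\der S$ to carry the action invariant. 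Each $\chi_j$ is a symplectomorphism lifting the base diffeomorphism $G_{j+\Zkone}\circ G_j^{-1}$. Form the quotient $M = (\bigsqcup_{j\in\Z_k} W_j)/\!\sim$, where $x\in W_j\sim \chi_j(x)\in W_{j+\Zkone}$ whenever both sides are defined. Declare $F\colon M\to B$ by $F=G_j^{-1}\circ q$ on $W_j$; this is consistent on overlaps because each $\chi_j$ lifts $G_{j+\Zkone}\circ G_j^{-1}$. The symplectic form $\omega$ is inherited from $\omega_0$ on each patch. Choose $\alpha_1=\der c_1$ pulled back, $\alpha_2$ as the $H$-orientation induced by the $G_{\Zkzero}=\identity$ chart, and $m_{\Zkzero}$ as the image of $0\in W_{\Zkzero}$.

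The final step is to verify the properties and recompute the invariants. That $F$ has connected fibers and is proper after possibly shrinking $B$ follows from the fact that each regular fiber is visibly a torus obtained by gluing $k$ cylinders, and the singular fiber is the $k$-fold wedge of spheres obtained by closing up the chain of cylinders at the $k$ critical points, matching \cref{thm:smooth-fiber-focus}. By construction, $(\wt\psi_j, G_j^{-1})$ is a singularity atlas compatible with $(\alpha_1,\alpha_2)$, where $\wt\psi_j\colon M\to \R^4$ is the local identification with $W_j$; the transition cocycle of this atlas has Taylor series $\Tl_0[G_\ell\circ G_j^{-1}]=\mathsf{g}_{j,\ell}$ by choice. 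Computing the desingularized period form, the contributions of $\kappa$ from each $\chi_j$ telescope and cancel against the $\sum_{j\in\Z_k}E_j^*\kappa$ in \cref{eq:definition-sigma}, leaving the smooth remainder $\der S$; hence the desingularized action integral has Taylor series $\mathsf{s}_{\Zkzero}$ modulo $2\pi \mathsf{A}_1 \Z$.

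The principal technical obstacle is ensuring the quotient $M$ is a Hausdorff smooth symplectic manifold with the claimed focus-focus structure on the singular fiber. This amounts to verifying that the gluing maps $\chi_j$ and $\chi_{j+\Zkone}^{-1}\circ\chi_{j+\Zkone}$ agree with the identity where forced to, and that the $W_j$ can be shrunk so that the equivalence relation closes up without collapsing any fiber. This will be controlled by the hyperbolic dynamics of $X_{\der c_2}$ near $0$ in each $W_j$: each $\chi_j$ moves points from a neighborhood of $\R^4_{\mathrm{s}}\setminus\Set{0}$ to a neighborhood of $\R^4_{\mathrm{u}}\setminus\Set{0}$ in the next patch, and these two regions are disjoint from each other's domains, so the gluing is free and produces a Hausdorff quotient. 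The bookkeeping that the induced Eliasson charts are actually compatible with $(\alpha_1,\alpha_2)$ after the insertion of $\Psi_{q^*\der S}$ in $\chi_{\Zknegone}$ is a direct check using \cref{lem:Psi-G-pullback-kappa}.
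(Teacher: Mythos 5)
Your construction is essentially the paper's: Borel's lemma to realize $\mathsf{s}_{\Zkzero}$ and the $\mathsf{g}_{\Zkzero,j}$ as smooth germs, then gluing $k$ copies of the local normal form along the cycle via transition maps of the form $\wt{\varphi}_{G}\circ\wt{\Psi}_{\pm\kappa}$, with one extra translation by $\der S$ inserted at the last edge to carry the action invariant. The invariant computation at the end (telescoping of the $\kappa$-contributions against $\sum_j E_j^*\kappa$, recovery of the transition cocycle from $\Tl_0[G_\ell\circ G_j^{-1}]$) also matches the paper.

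However, there is a genuine gap at the step you yourself flag as ``the principal technical obstacle,'' and your proposed resolution does not work. You argue that the quotient is Hausdorff because the image of each $\chi_j$ (near $\R^4_{\mathrm{u}}\setminus\Set{0}$ in $W_{j+\Zkone}$) is disjoint from the domain of the next gluing map, ``so the gluing is free.'' This is false on regular fibers: for a regular value $c$ near $0$, the fiber $q^{-1}(c)\cap W_j$ is an entire cylinder, the domain and image of $\wt{\Psi}_\kappa$ on it are the whole cylinder, and consequently the composite monodromy $\chi_{\Zknegone}\circ\dotsb\circ\chi_{\Zkzero}$ (and all its iterates) is defined on all of the regular part of $W_{\Zkzero}$. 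The equivalence relation is therefore generated not just by single edge-gluings but by a $\Z$-action on each regular cylinder, and one must prove that this action is free and properly discontinuous \emph{uniformly as $c\to 0$}: each iterate translates by an amount comparable to $-\sum_{j}\ln\abs{G_{\Zkzero,j}(c)}+\partial s_{\Zkzero}/\partial c_2$, and one needs this quantity to be bounded away from $0$ (in fact to tend to $+\infty$) on a suitably shrunk neighborhood. Without this, local injectivity of the quotient maps, closedness of the graph of the equivalence relation (hence Hausdorffness), compactness of the regular fibers, and properness of $F$ near the singular fiber are all unproved --- your ``each regular fiber is visibly a torus'' presupposes exactly the point at issue. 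This is the content of the paper's estimate \cref{eq:estimate-delta} and of \cref{lem:Psi-tau-discrete} (the height function $\rho$ with $L(c)\to\infty$ and the compact fundamental domain $kD$), which feed into \cref{lem:symplectic-manifold-structure} and into the properness argument; some such quantitative lemma must be supplied to close the proof.
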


\begin{proof}
  Let $m_j = \lambda_j(0)$ and $\mu_j = \Res{\lambda_j}_{\lambda_j(W_j)}^{-1} \colon (\lambda_j(W_j), \omega) \to (W_j, \omega_0)$ be a symplectomorphism for $j \in \Z_k$.
  Finally, we need to show that, the construction $(M, \omega, F)$ in \cref{lem:symplectic-manifold-structure} lies inside of $\IntSysFFk{k}$, has a singularity atlas $\Pa{(\mu_j, G_{\Zkzero, j})}_{j \in \Z_k}$ for singular points $m_j$, $j \in \Z_k$, compatible with some $(\alpha_1, \alpha_2) \in \Dir(M, \omega, F)$ such that \eqref{eq:invariants-coincide} holds.

  \emph{The triple $(M, \omega, F)$ is in $\IntSysFFk{k}$:}
  The triple $(M, \omega, F)$ is an integrable system since it is locally an integrable system everywhere, and the only singular points of $F$ are $m_j$ on $\mathcal{F}$, $j \in \Z_k$, which are of focus-focus type.
  To show that $F$ is proper, let $K \subset U_0$ be any compact subset.
  By \cref{lem:Psi-tau-discrete}, $\lambda(kW) = \lambda(kD)$.
  Since $q^{-1}(G_{\Zkzero, j}(K)) \cap D_j$ is compact, $F^{-1}(K) = \bigcup_{j \in \Z_k} \lambda_j(q^{-1}(G_{\Zkzero, j}(K)) \cap D_j)$ is compact.
  The fibers of $F$ are connected since $q$ has connected fibers.

  \emph{Computation of $\Period^{(M, \omega, F)}$:}
  Let $U \subset U_{\Zkzero, \mathrm{r}}$ be a simply connected open set.
  Note that $\Res{\kappa}_U \in (\Omega^1 / 2\pi \Period)(\R^2_{\mathrm{r}})$ and let $\kappa_U \in \Omega^1(U)$ be a representative of $\Res{\kappa}_U$.
  Let $\Res{\alpha_2}_U = \der s_{\Zkzero} - \sum_{j \in \Z_k} G_{\Zkzero, j}^* \kappa_U \in Z^1(U)$.
  We have, in $F^{-1}(U)$,
  \begin{align*}
    \Res{\varphi_{[\Zkzero, \Zkzero]_k}}_{F^{-1}(U)} &= \Psi_{-\der s_{\Zkzero}} \circ \varphi_{G_{\Zknegone, \Zkzero}} \circ \Psi_{-\kappa_U} \circ \cdots \circ \Psi_{-\kappa_U} \circ \varphi_{G_{\Zkone, \overline{2}}} \circ \Psi_{-\kappa_U} \circ \varphi_{G_{\Zkzero, \Zkone}} \circ \Psi_{-\kappa_U} \\
    &= \Psi_{-\der s_{\Zkzero} - \sum_{j \in \Z_k} G_{\Zkzero, j}^* \kappa_U} \circ \varphi_{G_{\Zknegone, \Zkzero}} \circ \cdots \circ \varphi_{G_{\Zkone, \overline{2}}} \circ \varphi_{G_{\Zkzero, \Zkone}} \\
    &= \Psi_{-\Res{2\pi \alpha_2}_U}.
  \end{align*}
  So $\Res{\alpha_2}_U \in \Period^{(M, \omega, F)}(U)$.
  Let $\alpha_1 = dc_1 \in \Omega^1(U_{\Zkzero})$, then $\Res{\alpha_1}_U \in \Period^{(M, \omega, F)}(U)$.
  On the other hand, for any $\tau \in Z^1(U)$ to be a period form, it has to satisfy $\Psi_{2\pi \tau} = \varphi_{[\Zkzero, \Zkzero]_{pk}}$ for some $p \in \Z$.
  Therefore, $\Period^{(M, \omega, F)}(U)$ is the abelian group generated by $\Res{\alpha_1}_U, \Res{\alpha_2}_U$.
  Similarly, we have $\Period^{(M, \omega, F)}(U) = \alpha_1 \Z$ if $U$ is an open neighborhood of $0$.

  \emph{Computation of the invariants:}
  For each $j \in \Z_k$, $(\mu_j, G_{\Zkzero, j})$ is an Eliasson local chart near $m_j$ since $q \circ \mu_j = G_{\Zkzero, j} \circ F$.
  For $j = \Zkzero$, note that $\alpha_1 = \der c_1$ and $\frac{\partial}{\partial c_2} \intprod \alpha_2 = L$, so $(\mu_{\Zkzero}, \identity)$ is compatible with $(\alpha_1, \alpha_2)$.
  For $j \in \Z_k$, since $\der G_{\Zkzero, j}$ has positive diagonal entries near the origin, $(\mu_j, G_{\Zkzero, j})$ is compatible with $(\alpha_1, \alpha_2)$.
  By the construction of $M$, any flow line of $\mathcal{X}_{\alpha_2}$ with $\upalpha$-limit $m_j$ for some $j \in \Z_k$ has $\upomega$-limit $m_{j+\Zkone}$, so $\Pa{(\mu_j, G_{\Zkzero, j})}_{j \in \Z_k}$ is a singularity atlas compatible with $(\alpha_1, \alpha_2)$.

  Now since $\der s_{\Zkzero} = 2\pi \alpha_2 - \sum_{j \in \Z_k} G_{\Zkzero, j}^* \kappa$ and $s_{\Zkzero}(0) = 0$, the action Taylor series $\mathsf{s}_{\Zkzero}$ at $m_{\Zkzero}$ is, $\mathsf{s}_{\Zkzero} = \Tl_0[s_{\Zkzero}] = \mathsf{v}_{\Zkzero}$.
  The transition cocycle $\Pa{g_{j, \ell}}_{j, \ell \in \Z_k}$ is such that $\mathsf{g}_{j, \ell} = \Tl_0[\proj_2 \circ G_{j, \ell}] = \mathsf{w}_{j, \ell}$ for $j, \ell \in \Z_k$.
\end{proof}

\section{Proof of the main theorem} \label{sec:proof-main-theorem}

\cref{thm:invariant-bijection} follows from \cref{lem:invariant-uniqueness,lem:invariant-injectivity,lem:invariant-surjectivity} put together.

\begin{proof}[Proof of \cref{thm:invariant-bijection}]
  The map
  \begin{align*}
    \Phi \colon \Iffobk{k} &\to \mathsf{R}_{2\pi X} \times \mathsf{R}_+^{k-1} \\
    \Pa{\mathsf{s}_{\Zkzero}, \dotsc, \mathsf{s}_{\Zknegone}, \mathsf{g}_{\Zkzero, \Zkzero}, \dotsc, \mathsf{g}_{\Zkzero, \Zknegone}, \dotsc, \mathsf{g}_{\Zknegone, \Zkzero}, \dotsc, \mathsf{g}_{\Zknegone, \Zknegone}} &\mapsto \Pa{\mathsf{s}_{\Zkzero}, \mathsf{g}_{\Zkzero, \Zkone}, \mathsf{g}_{\Zkone, \overline{2}}, \dotsc, \mathsf{g}_{\Zknegone, \Zkzero}}
  \end{align*}
  is a bijection.
\end{proof}

\appendix

\section{Technical results on flat functions} \label{sec:flat-function}

Recall that $\Tl_0[f] \in \R[[T^*_0U]]$ denotes the Taylor series of $f \colon U \to \R$ at the origin $U$for $U$ an open neighbourhood of $0$ in $\R^2$.
Note how the Taylor series, as a formal power series, depends on the choice of a basis of $T^*_0U$.
Note that the Taylor series only depends on the germ of the function.

\begin{definition} \label{def:flat-function}
  We call $f$ a \emph{flat function} at $0$, or the function $f$ is flat at $0$, if $\Tl_0b[f] = 0$.
  Denote by $\inftes$ the space of flat functions at $0$.
\end{definition}

Note that, by the Fa\`a di Bruno's formula, the Taylor series of the composition of smooth maps is the composition of their Taylor series.
This is why the definition of flat functions is independent of the choice of the basis of $T^*_0U$.

We will use the multi-index notations in \cref{lem:logarithm-Ck,lem:logarithm-flat}.
A multi-index $j$ is a pair $(j_1, j_2)$ where $j_1, j_2 \in \Z_{\geq 0}$.
We use $\abs{j} = j_1 + j_2$.
If $c = (c_1, c_2) \in \R^2$ then $c^j = c_1^{j_1} c_2^{j_2}$.
If $f$ is a function on an open subset of $\R^2$ then $\partial^j f = \frac{\partial^{\abs{j}} f}{\partial c_1^{j_1} \partial c_2^{j_2}}$.

\begin{lemma} \label{lem:logarithm-Ck}
  Let $m \in \Z_{\geq 0}$.
  Let $g_j \colon \R^2 \to \R$ be a smooth function for multi-index $j$ with $\abs{j} = m$ and let $g(c) = \sum_{\abs{j} = m} g_j(c) c^j$.
  Then the function $g \ln \abs \cdot \colon \R^2_\mathrm{r} \to \R$ can be extended to a $\mathrm{C}^{m-1}$-function on $\R^2$ if $m \geq 1$.
  Furthermore, the extension is $\mathrm{C}^m$ if and only if $g_j(0) = 0$ for all $j$.
\end{lemma}

\begin{proof}
  Let $L_m$, $m \in \Z_{\geq 0}$, be the $\mathrm{C}^\infty(\R^2)$-vector space spanned by functions of the form $\Pa{\R^2_{\mathrm{r}} \ni c \mapsto c^j \ln \abs{c}}$ for which $j$ is a multi-index with $\abs{j} \geq m$, extended onto $\R^2$ by zero.
  Let $Q_m$, $m \in \Z$, be the $\mathrm{C}^\infty(\R^2)$-vector space spanned by functions of the form $\Pa{\R^2_{\mathrm{r}} \ni c \mapsto \frac{c^j}{\abs{c}^{m_0}}}$ for which $j$ is a multi-index, $m_0 \in \Z_{\geq 0}$, and $\abs{j} \geq m_0 + m$, extended onto $\R^2$ by zero.
  By direct calculations,
  \begin{align*}
    \frac{\partial}{\partial c_1} \Pa{c^j \ln \abs{c}} &= j_1 c_1^{j_1-1} c_2^{j_2} \ln \abs{c} + \frac{c_1^{j_1+1} c_2^{j_2}}{\abs{c}^2}, \\
    \frac{\partial}{\partial c_1} \Pa{\frac{c^j}{\abs{c}^{m_0}}} &= j_1 \frac{c_1^{j_1-1} c_2^{j_2}}{\abs{c}^{m_0}} - m_0 \frac{c_1^{j_1+1} c_2^{j_2}}{\abs{c}^{m_0+2}},
  \end{align*}
  which implies that if $f \in L_m$ then $\frac{\partial}{\partial c_1} f, \frac{\partial}{\partial c_2} f \in L_{m-1} + Q_{m-1}$ for $m \geq 1$, and if $f \in Q_m$ then $\frac{\partial}{\partial c_1} f, \frac{\partial}{\partial c_2} f \in Q_{m-1}$ for $m \in \Z$.
  Therefore, $Q_0 \subseteq \mathrm{B}_0$ the set of functions on $\R^2$ bounded in a neighborhood of the origin, $L_1, Q_1 \subseteq \mathrm{C}^0$ and then $L_m, Q_m \subseteq \mathrm{C}^{m-1}$ for $m \geq 1$.

  We aim to find $L_0 \cap \mathrm{B}_0$ and let $f = h \ln \abs \cdot \in L_0$ for some $h \in \mathrm{C}^\infty$.
  On one hand, if $f \in L_0 \cap \mathrm{B}_0$, then the boundedness requires that $h(0) = 0$.
  On the other hand, if $h(0) = 0$, then $h(c) = h_1 c_1 + h_2 c_2$ for some $h_1, h_2 \in \mathrm{C}^\infty$ and then $f \in L_1 \subseteq \mathrm{C}^0$.
  Therefore, $L_0 \cap \mathrm{B}_0 = L_1$.

  In particular, given $g(c) = \sum_{\abs{j} = m} g_j(c) c^j$, we have $g \ln \abs \cdot \in L_m \subseteq \mathrm{C}^{m-1}$ if $m \geq 1$, and for any multi-index $j$ with $\abs{j} = m$, we have
  \begin{align*}
    \partial^j \Pa{g(c) \ln \abs{c}} \in j!\, g_j(c) \ln \abs c + L_1 + Q_0.
  \end{align*}
  On one hand, $g \ln \abs \cdot \in \mathrm{C}^m$ requires that $\partial^j \Pa{g(c) \ln \abs{c}} \in \mathrm{B}_0$, and hence $g_j(0) = 0$.
  On the other hand, $g_j(0) = 0$ for every multi-index $j$ with $\abs{j} = m$ implies that $g \ln \abs \cdot \in L_{m+1} \subseteq \mathrm{C}^m$.
\end{proof}

\begin{lemma} \label{lem:logarithm-flat}
  For a smooth function $f$ on $\R^2$, the function $f\ln\abs\cdot$ on $\R^2_\mathrm{r}$ can be smoothly extended onto $\R^2$ only when $f$ is flat.
  If $f$ is flat, the extension of $f\ln\abs\cdot$ is also flat.
\end{lemma}

\begin{proof}
  Using Taylor expansion of $f$, for any $m \in \N$, there exist smooth functions $g_j \colon \R^2 \to \R$ for all multi-indices $j$ with $\abs{j} = m+1$ such that
  \begin{equation} \label{eq:taylor-expansion}
    f(c) = \sum_{\abs{j} = 0}^m \frac{1}{j!} \partial^j f(0) c^j + \sum_{\abs{j} = m+1} \frac{1}{j!} g_j(c) c^j.
  \end{equation}
  By \cref{lem:logarithm-Ck,eq:taylor-expansion}, $f\ln\abs\cdot \in \mathrm{C}^m$ if and only if $\partial^j f(0) = 0$ for any multi-index $j$ with $\abs{j} \leq m$.
  Therefore, $f\ln\abs\cdot \in \mathrm{C}^\infty$ if and only if $f \in \inftes$.

  Note that $\ln \abs c \in \mathcal{O}(\abs{c}^{-1})$.
  If $f \in \inftes$, then for any $m \in \N$, there exist smooth functions $g_j \colon \R^2 \to \R$ for all multi-index $j$ with $\abs{j} = m+1$ such that
  \begin{equation*}
    f(c) \ln \abs c = \sum_{\abs{j} = m+1} \frac{1}{j!} g_j(c) c^j \ln \abs c \in \mathcal{O}(\abs{c}^{m}).
  \end{equation*}
  Hence $f\ln\abs\cdot \in \inftes$.
\end{proof}

\begin{lemma} \label{lem:flat-times-tempered}
  If $g$ is a flat function on $\R^2$ and $h$ is a smooth function on $\R^2_\mathrm{r}$ that satisfies
  \begin{equation} \label{eq:tempered}
    \forall\text{multi-index}~j~\exists m_j \in \Z\text{ such that } \lim_{c \to 0} \abs{c}^{m_j} \abs{\partial^j h(c)} = 0.
  \end{equation}
  Then $f = gh$ on $\R^2_\mathrm{r}$ has a smooth extension $\tilde{f}$ on $\R^2$.
\end{lemma}

\begin{proof}
  We calculate the partial derivatives of $f$ for a multi-index $j$ and $m \in \Z$:
  \begin{equation} \label{eq:limit-of-derivative}
    \abs{c}^m \abs{\partial^j f(c)} \leq \sum_{0 \leq \ell \leq j} \binom{j}{\ell} \abs{c}^{m-m_\ell} \abs{\partial^{j-\ell} g(c)} \cdot \abs{c}^{m_\ell} \abs{\partial^\ell h(c)} \to 0
  \end{equation}
  as $c \to 0$.
  Here we use the fact that $\partial^{j-\ell} g$ is a flat function so it is dominated by any power of $\abs{c}$.
  Now let $\tilde{f}$ be the extension of $f$ by $\tilde{f}(0) = 0$.
  Then by \cref{eq:limit-of-derivative}, $\frac{\partial \tilde{f}}{\partial c_1}(0) = \lim_{\delta \to 0} \frac{f(\delta, 0)}{\delta} = 0$ and $\frac{\partial \tilde{f}}{\partial c_2}(0) = \lim_{\delta \to 0} \frac{f(0, \delta)}{\delta} = 0$, and then $\tilde{f} \in \mathrm{C}^1$.
  Inductively, we can show that $\tilde{f} \in \mathrm{C}^\infty$ and any higher order derivative of $\tilde{f}$ vanishes at the origin.
\end{proof}

\section*{Acknowledgments}

The first author was supported by NSF CAREER Grant DMS-1518420 and by a BBVA Foundation Grant for Scientific Research Projects with project title ``From Integrability to Randomness in Symplectic and Quantum Geometry''.
The second author was supported by NSF CAREER Grant DMS-1518420 and by Beijing Institute of Technology Research Fund Program for Young Scholars.
The authors thank San V{\~u} Ng{\d{o}}c for many helpful discussions on the topic of the paper and Joseph Palmer for helpful comments on a preliminary version of the paper.
The second author would also like to thank San V{\~u} Ng{\d{o}}c for the invitation to visit the Universit\'e de Rennes 1 in November and December of 2016.

Part of this paper was completed while both authors were at the University of California, San Diego and during the second author's visit to Tsinghua University in 2021.

\bibliographystyle{hplain}
\bibliography{ref}

\begin{thebibliography}{10}

\bibitem{MR3924558}
Jaume Alonso, Holger~R. Dullin, and Sonja Hohloch.
\newblock Taylor series and twisting-index invariants of coupled
  spin-oscillators.
\newblock {\em J. Geom. Phys.}, 140:131--151, 2019.

\bibitem{MR4039778}
Jaume Alonso, Holger~R. Dullin, and Sonja Hohloch.
\newblock Symplectic classification of coupled angular momenta.
\newblock {\em Nonlinearity}, 33(1):417--468, 2020.

\bibitem{MR997295}
Vladimir~I. Arnold.
\newblock {\em Mathematical methods of classical mechanics}, volume~60 of {\em
  Graduate Texts in Mathematics}.
\newblock Springer-Verlag, New York, second edition, 1989.
\newblock Translated from the Russian by K. Vogtmann and A. Weinstein.

\bibitem{MR4057723}
Alexey Bolsinov and Anton Izosimov.
\newblock Smooth invariants of focus-focus singularities and obstructions to
  product decomposition.
\newblock {\em J. Symplectic Geom.}, 17(6):1613--1648, 2019.

\bibitem{MR2036760}
Alexey~V. Bolsinov and Anatoly~T. Fomenko.
\newblock {\em Integrable {H}amiltonian systems}.
\newblock Chapman \& Hall/CRC, Boca Raton, FL, 2004.
\newblock Geometry, topology, classification, Translated from the 1999 Russian
  original.

\bibitem{MR3098197}
Marc Chaperon.
\newblock Normalisation of the smooth focus-focus: a simple proof.
\newblock {\em Acta Math. Vietnam.}, 38(1):3--9, 2013.

\bibitem{MR596430}
Johannes~J. Duistermaat.
\newblock On global action-angle coordinates.
\newblock {\em Comm. Pure Appl. Math.}, 33(6):687--706, 1980.

\bibitem{MR3017036}
Holger~R. Dullin.
\newblock Semi-global symplectic invariants of the spherical pendulum.
\newblock {\em J. Differential Equations}, 254(7):2942--2963, 2013.

\bibitem{Eliasson_1984}
Håkan Eliasson.
\newblock {\em Hamiltonian systems with Poisson commuting integrals}.
\newblock PhD thesis, 1984.
\newblock Diss. Stockholm : Univ.

\bibitem{HohPal}
S.~Hohloch and J.~Palmer.
\newblock A family of compact semitoric systems with two focus-focus
  singularities.
\newblock {\em Journal of Geometric Mechanics}, 10(3):331--357, 2018.

\bibitem{MR3843843}
Sonja Hohloch, Silvia Sabatini, Daniele Sepe, and Margaret Symington.
\newblock Faithful semitoric systems.
\newblock {\em SIGMA Symmetry Integrability Geom. Methods Appl.}, 14:Paper No.
  084, 66, 2018.

\bibitem{MR3927110}
Yohann Le~Floch and \'{A}lvaro Pelayo.
\newblock Symplectic geometry and spectral properties of classical and quantum
  coupled angular momenta.
\newblock {\em J. Nonlinear Sci.}, 29(2):655--708, 2019.

\bibitem{mineursystemes}
Henri Mineur.
\newblock Sur les syst{\`e}mes m{\'e}caniques dans lesquels figurent des
  param{\`e}tres fonctions du temps. etude des systemes admettant n
  int{\'e}grales premies uniformes en involution. extension {\`a} ces
  syst{\`e}mes des conditions de quantification de bohr-sommerfeld.
\newblock {\em Journal de l'Ecole Polytechnique, S{\'e}rie III}, pages
  173--191, 237--270, 1937.

\bibitem{MR4200677}
\'{A}lvaro Pelayo.
\newblock Symplectic invariants of semitoric systems and the inverse problem
  for quantum systems.
\newblock {\em Indag. Math. (N.S.)}, 32(1):246--274, 2021.

\bibitem{MR2534101}
{\'A}lvaro Pelayo and San V{\~u}~Ng{\d{o}}c.
\newblock Semitoric integrable systems on symplectic 4-manifolds.
\newblock {\em Invent. Math.}, 177(3):571--597, 2009.

\bibitem{MR2784664}
\'Alvaro Pelayo and San V{\~u}~Ng{\d{o}}c.
\newblock Constructing integrable systems of semitoric type.
\newblock {\em Acta Math.}, 206(1):93--125, 2011.

\bibitem{MR2864789}
\'Alvaro Pelayo and San V{\~u}~Ng{\d{o}}c.
\newblock Hamiltonian dynamics and spectral theory for spin-oscillators.
\newblock {\em Comm. Math. Phys.}, 309(1):123--154, 2012.

\bibitem{MR3765971}
Daniele Sepe and San V{\~u}~Ng{\d{o}}c.
\newblock Integrable systems, symmetries, and quantization.
\newblock {\em Lett. Math. Phys.}, 108(3):499--571, 2018.

\bibitem{MR2024634}
Margaret Symington.
\newblock Four dimensions from two in symplectic topology.
\newblock In {\em Topology and geometry of manifolds ({A}thens, {GA}, 2001)},
  volume~71 of {\em Proc. Sympos. Pure Math.}, pages 153--208. Amer. Math.
  Soc., Providence, RI, 2003.

\bibitem{MR1941440}
San V{\~u}~Ng{\d{o}}c.
\newblock On semi-global invariants for focus-focus singularities.
\newblock {\em Topology}, 42(2):365--380, 2003.

\bibitem{MR3098203}
San V{\~u}~Ng{\d{o}}c and Christophe Wacheux.
\newblock Smooth normal forms for integrable {H}amiltonian systems near a
  focus-focus singularity.
\newblock {\em Acta Math. Vietnam.}, 38(1):107--122, 2013.

\bibitem{MR1389366}
Nguyen~Tien Zung.
\newblock Symplectic topology of integrable {H}amiltonian systems. {I}.
  {A}rnold-{L}iouville with singularities.
\newblock {\em Compositio Math.}, 101(2):179--215, 1996.

\bibitem{MR1454718}
Nguyen~Tien Zung.
\newblock A note on focus-focus singularities.
\newblock {\em Differential Geom. Appl.}, 7(2):123--130, 1997.

\bibitem{MR1908416}
Nguyen~Tien Zung.
\newblock Another note on focus-focus singularities.
\newblock {\em Lett. Math. Phys.}, 60(1):87--99, 2002.

\end{thebibliography}

\authaddresses

\end{document}